\newtheorem{theorem}{Theorem}[section]
\newtheorem{lemma}[theorem]{Lemma}
\newtheorem{corollary}[theorem]{Corollary}
\newtheorem{prop}[theorem]{Proposition}
\newtheorem{notation}[theorem]{Notation}
\theoremstyle{definition}
\newtheorem{definition}[theorem]{Definition}
\theoremstyle{remark}
\newtheorem{remark}[theorem]{Remark}
\numberwithin{equation}{section}
\newcommand*{\QED}{\hfill\ensuremath{\square}}
\newcommand\nc{\newcommand}
\nc{\on}{\operatorname}
\nc{\R}{\mathbb R}
\nc{\C}{\mathbb C}
\nc{\Q}{\mathbb Q}
\nc{\Z}{\mathbb Z}
\nc{\N}{\mathbb N}
\nc{\F}{\mathbb F}
\nc{\Hom}{\on{Hom}}
\nc{\wt}{\widetilde}
\nc{\kernel}{\text{ker}}
\nc{\image}{\text{Im}}
\nc{\sls}{\subsetneq ... \subsetneq}
\nc{\ssn}{\subsetneq}
\nc{\bull}{$\bullet \, \,$}
\nc{\ol}{\overline}
\nc{\short}[3]{0 \longrightarrow #1 \longrightarrow #2 \longrightarrow #3 \longrightarrow 0}
\nc{\pd}[2]{\frac{\partial #1}{\partial #2}}
\nc{\one}{\mathbf{1}}
\nc{\rnc}{\renewcommand}
\nc{\e}{\varepsilon}
\nc{\DMO}{\DeclareMathOperator}
\nc{\dd}{\emph{d}}
\nc{\grad}{\nabla}
\rnc{\leq}{\leqslant}
\rnc{\geq}{\geqslant}
\rnc{\int}{\varint}
\rnc{\d}{\text{d}}
\DeclareMathOperator{\E}{\mathbb{E}}
\DMO{\D}{\text{D}}
\DMO{\pv}{p.v.}
\newenvironment{nouppercase}{%
  \renewcommand{\uppercasenonmath}[1]{}}{}
\begin{document}


\title{\LARGE Local Marchenko-Pastur Law for Random Bipartite Graphs}

\author{\large Kevin Yang$^{\dagger}$}

\begin{nouppercase}
\maketitle
\end{nouppercase}
\begin{center}
\today
\end{center}
\footnote{$\dagger$ Stanford University, Department of Mathematics. Email: kyang95@stanford.edu.}

\begin{abstract}
We study the random matrix ensemble of covariance matrices arising from random $(d_b, d_w)$-regular bipartite graphs on a set of $M$ black vertices and $N$ white vertices, for $d_b \gg \log^4 N$. We simultaneously prove that the Green's functions of these covariance matrices and the adjacency matrices of the underlying graphs agree with the corresponding limiting law (e.g. Marchenko-Pastur law for covariance matrices) down to the optimal scale. This is an improvement from the previously known mesoscopic results. We obtain eigenvector delocalization for the covariance matrix ensemble as consequence, as well as a weak rigidity estimate. 
\end{abstract}

\tableofcontents

\newpage
\section{Introduction}
Let $X_A$ denote the adjacency matrix of a random $(d_b, d_w)$-biregular graph with off-diagonal blocks $A, A^{\ast}$. Here, we assume $A$ is a matrix of size $M \times N$ with $M \geq N$. We define the normalized empirical spectral distributions of $d_w^{-1} A^{\ast}A$ and $d_w^{-1/2} X_A$ to be the following macroscopic random point masses:
\begin{align}
\mu_{A^{\ast}A} \ &= \ \frac{1}{N} \sum_{\lambda \in \sigma(A^{\ast}A)} \ \delta_{d_w^{-1} \lambda}(x), \\
\mu_{X_A} \ &= \ \frac{1}{M+N} \sum_{\lambda \in \sigma(X_A)} \ \delta_{d_w^{-1/2} \lambda}(x).
\end{align}
It is known (see \cite{BS}) that the empirical spectral distribution of the normalized covariance matrix converges almost surely (in the limit $M, N \to \infty$ and $d \to \infty$ at a suitable rate) to the Marchenko-Pastur law with parameter $\gamma := N/M$ given by the following density function:
\begin{align}
\varrho_{\infty}(x) \ \d x \ := \ \varrho_{MP}(x) \ \d x \ = \ \frac{\sqrt{(\lambda_+ - x)(x - \lambda_-)}}{2 \pi \gamma x} \mathbf{1}_{x \in [\lambda_{-}, \lambda_{+}]} \d x, \label{eq:MPlaw}
\end{align}
where we define $\lambda_{\pm} = (1 \pm \sqrt{\gamma})^2$. As noted in \cite{DJ}, this implies that the empirical spectral distribution of the normalized adjacency matrix converges almost surely to a linearization of the Marchenko-Pastur law given by the following density function:
\begin{align}
\varrho(E) \ = \ \begin{cases}
	\frac{\gamma}{(1 + \gamma) \pi |E|} \sqrt{(\lambda_{+} - E^2)(E^2 - \lambda_{-})} & E^2 \in [\lambda_{-}, \lambda_{+}] \\
	0 & E^2 \not\in [\lambda_{-}, \lambda_{+}]
	\end{cases}.
\end{align}
We briefly remark that in the regime $M = N$, the linearized Marchenko-Pastur density agrees exactly with the Wigner semicircle density, which is the limiting density for the empirical spectral distribution of random $d$-regular graphs on $N$ vertices in the limit $N, d \to \infty$ at suitable rates. In this regime, coincidence of the empirical spectral distribution and the limiting Wigner semicircle law was shown for intervals at the optimal scale $N^{-1 + \e}$ in \cite{BKY}. This short-scale result is crucial for understanding eigenvalue gap and correlation statistics and showing universality of eigenvalue statistics for random regular graphs compared to the GOE. 

Moreover, the short-scale result is a drastic improvement from the order 1 result discussed above for biregular bipartite graphs. For these graphs, convergence of the empirical spectral distribution of $d_w^{-1} A^{\ast}A$ to the Marchenko-Pastur law was shown for scales $N^{-\e}$ for sufficiently small $\e > 0$ in \cite{DJ}. The techniques used in this paper included primarily analysis of trees and ballot sequences. This result, however, is far from the optimal scale and is thus far from sufficient for showing universality of eigenvalue statistics. The aim of this paper is remedy this problem and obtain convergence at the optimal scale. Similar to \cite{BKY}, we bypass the analysis of trees and ballot sequences with a combinatorial operator on graphs known as \emph{switchings}, which are ubiquitous throughout graph theory. This will help us resample vertices in a random graph and will be crucial in deriving a tractable self-consistent equation for the Green's function of a random biregular bipartite graph. However, in contrast to \cite{DJ}, we aim to prove convergence at the optimal scale for the ensembles of (normalized) covariance matrices and adjacency matrices simultaneously, transferring between the analysis of each ensemble whenever convenient. In \cite{DJ}, the result for adjacency matrices was derived as a consequence of the result for covariance matrices. To the author's knowledge, this idea is original to this paper. 

For random regular graphs, universality of local bulk eigenvalue correlation statistics was shown in \cite{BHKY}, which required both the local law from \cite{BKY} as a crucial ingredient as well as analysis of the Dyson Brownian Motion in \cite{LY}. In a similar spirit for random biregular bipartite graphs, we prove universality of bulk eigenvalue statistics in \cite{Y2} using the local law result in this paper and an analysis of Dyson Brownian Motion for covariance matrices in \cite{Y3}. Thus, this paper may be viewed as the first in a series of three papers on random covariance matrices resembling the papers \cite{BKY}, \cite{BHKY}, \cite{LY}, and \cite{LSY} which study Wigner matrices. 

Before we proceed with the paper, we remark that as with random regular graphs and Wigner ensembles, the covariance matrix ensembles arising from biregular bipartite graphs is a canonical example of a covariance matrix ensemble whose entries are nontrivially correlated. A wide class of covariance matrices with independent sample data entries was treated in the papers \cite{A}, \cite{BEKYY}, \cite{BKYY}, \cite{NP2}, and \cite{TV}. In these papers, local laws were derived and universality of local eigenvalue correlation statistics were proven assuming moment conditions. Because of the nontrivial correlation structure of the sample data entries and the lack of control of entry-wise moments, these papers and their methods cannot apply to our setting. 
\subsection{Acknowledgements}
The author thanks H.T. Yau and Roland Bauerschmidt for suggesting the problem, referring papers, and answering the author's questions pertaining to random regular graphs. This work was partially funded by a grant from the Harvard College Research Program. This paper was written while the author was a student at Harvard University.
\subsection{Notation}
We adopt the Landau notation for big-Oh notation, and the notation $a \lesssim b$. We establish the notation $[[a, b]] := [a,b] \cap \Z$. We let $[E]$ denote the underlying vertex set of a graph $E$. For vertices $v, v' \in E$, we let $vv'$ denote the edge in $E$ containing $v$ and $v'$. For a real symmetric matrix $H$, we let $\sigma(H)$ denote its (real) spectrum.
%
%
%
\section{Underlying Model and Main Results}
We briefly introduce the underlying graph model consisting of bipartite graphs on a prescribed vertex set. 
\begin{definition} \label{definition:bipartitegraph}
Suppose $\mathscr{V} = \{ 1_b, 2_b, \ldots, M_b, 1_w, \ldots, N_w \}$ is a set of labeled vertices, and suppose $E$ is a simple graph on $\mathscr{V}$. We say the graph $E$ is \emph{bipartite} with respect to the vertex sets $(\mathscr{V}, V_b, V_w)$ if $\mathscr{V}$ admits the following decomposition:
\begin{align}
\mathscr{V} \ = \ \left\{ 1_b, 2_b, \ldots, M_b \right\} \bigcup \left\{ 1_w, 2_w, \ldots, N_w \right\} \ =: \ V_b \cup V_w,
\end{align}
such that for any vertices $v_i, v_j \in V_b$ and $v_k, v_\ell \in V_w$, the edges $v_i v_j$ and $v_k v_\ell$ are not contained in $E$. 

Moreover, for fixed integers $d_b, d_w > 0$, we say that a bipartite graph $E$ is $(d_b, d_w)$-\emph{regular} if each $v \in V_b$ has $d_b$ neighbors and if each $w \in V_w$ has $d_w$ neighbors.
\end{definition}
\begin{remark}
For the remainder of this paper, we will refer to $V_b$ as the set of \emph{black} vertices and $V_w$ as the set of \emph{white} vertices. Moreover, we will refer to a $(d_b, d_w)$-regular graph simply as a \emph{biregular} graph, if the parameters $d_b, d_w$ are assumed. In particular, when referring to a biregular graph we assume a bipartite structure. Lastly, the set of $(d_b, d_w)$-regular graphs on the vertex sets $(\mathscr{V}, V_b, V_w)$ will be denoted by $\Omega$, where we suppress the dependence of the parameters $M, N, d_b, d_w$ without the risk of confusion.
\end{remark}
We now record the following identity which follows from counting the total number of edges in a biregular graph $E$:
\begin{align}
Md_b \ = \ N d_w, \label{eq:countedges}
\end{align}
where $M = M_b$ and $N = N_w$. We retain this notation for $M, N$ for the remainder of the paper. 
\subsection{The Random Matrix Ensemble}
We now introduce a modification of the random matrix ensemble studied in \cite{DJ}, retaining the notation used in the introduction of this paper. We first note the adjacency matrix of a biregular graph is a block matrix with vanishing diagonal, i.e. has the following algebraic form:
\begin{align}
X_A \ = \ \begin{pmatrix} 0 & A \\ A^\ast & 0 \end{pmatrix},
\end{align}
where $A$ is a matrix of size $M \times N$. By the biregular assumption of the underlying graph, the matrix $X_A$ exhibits the following eigenvalue-eigenvector pair:
\begin{align}
\lambda_{\max} \ = \ \sqrt{d_b d_w}, \quad \mathbf{e}_{\max} \ = \ \begin{pmatrix} \mathbf{e}_b \\ \sqrt{\alpha} \mathbf{e}_w \end{pmatrix},
\end{align}
where $\mathbf{e}_b$ and $\mathbf{e}_w$ are constant $\ell^2$-normalized vectors of dimension $M$ and $N$ respectively. By the Perron-Frobenius theorem, the eigenvalue $\lambda_{\max}$ is simple. 

Using ideas from \cite{BKY}, the matrix ensemble of interest is the ensemble $\mathscr{X} = \mathscr{X}(M, N, d_b, d_w)$ of \emph{normalized} adjacency matrices given by
\begin{align}
X \ = \ \begin{pmatrix} 0 & H \\ H^\ast & 0 \end{pmatrix}, \quad H \ = \ d_w^{-1/2} \left( A - d_b \mathbf{e}_{b} \mathbf{e}_{w}^\ast \right), \label{eq:normalizedmatrix}.
\end{align}
Because the eigenspace corresponding to $\lambda_{\max}$ is one-dimensional, standard linear algebra implies that upon a normalization factor of $d_w^{-1/2}$, the matrices $X$ and $X_A$ will share the same eigenvalue-eigenvector pairs orthogonal to the eigenspace corresponding to $\lambda_{\max}$. On this maximal eigenspace, the matrix $X$ will exhibit $\mathbf{e}_{\max}$ as an eigenvector corresponding to the eigenvalue $\lambda = 0$. Moreover, as noted in \cite{DJ} the spectrum of $X$ will be compactly supported in a sense we will make shortly make precise.

To complete our discussion of the random matrix ensemble of interest, we first note that $\mathscr{X}$ is clearly in bijection with the set of biregular graphs on the fixed triple $(\mathscr{V}, V_b, V_w)$, which is a finite set for each fixed $M, N, d_b, d_w$. Because $\Omega$ is finite, we may impose the uniform probability measure on it. 

We now define the following fundamental parameters:
\begin{align}
\alpha \ := \ \frac{M}{N} \ = \ \frac{d_w}{d_b}, \quad \gamma \ := \ \frac{1}{\alpha} \ = \ \frac{N}{M}.
\end{align}
where here we use the identity \eqref{eq:countedges}. As in \cite{DJ}, we now impose the following constraints on the parameters $M, N, d_b, d_w$:
\begin{align}
\lim_{M, N \to \infty} \ \alpha \ \geq \ 1. \label{eq:limitratio}
\end{align}
This assumption is not crucial as we may also relabel the vertices $\mathscr{V}$ if $M < N$ in the limit. This assumption will be convenient in our analysis of the spectral statistics, however. This completes our construction of the random matrix ensemble of interest.
\subsection{Random Covariance Matrices}
Strictly speaking, the random matrix ensemble studied in \cite{DJ} was the ensemble $\mathscr{X}_{\ast}$ consisted of the corresponding $N \times N$ covariance matrices:
\begin{align}
X_{\ast} \ = \ H^{\ast} H, \quad H \ = \ d_w^{-1/2}(A - d_b \mathbf{e}_b \mathbf{e}_w^{\ast}).
\end{align}
The ensemble $\mathscr{X}$ introduced in this paper may be realized as a linearization of the ensemble $\mathscr{X}_{\ast}$ of covariance matrices. This will be the upshot of working instead with the matrix ensemble $\mathscr{X}$ whenever convenient, i.e. when studying linear perturbations of the adjacency matrix of a biregular graph. The following result shows that when transferring between the ensembles $\mathscr{X}$ and $\mathscr{X}_{\ast}$, the spectral data is preserved. This result is standard in linear algebra and the analysis of compact operators, but we include it for completeness and since organizational purposes, as the result does not seem to be written precisely and formally in any standard text. 

Before we give the result and its (short) proof, we define the following third matrix ensemble $\mathscr{X}_{\ast,+}$ of $M \times M$ covariance matrices:
\begin{align}
X_{\ast,+} \ = \ H H^{\ast}.
\end{align}
The ensemble $\mathscr{X}_{\ast,+}$ will not play any essential role in our analysis of random matrix ensembles and is included in this paper for the sake of completeness of our results.
\begin{prop} \label{prop:spectralcorrespondence}
Suppose $H$ is a real-valued matrix of size $M \times N$ with $M \geq N$, and suppose $X$ is a block matrix of the following form:
\begin{align}
X \ = \ \begin{pmatrix} 0 & H \\ H^{\ast} & 0 \end{pmatrix}. \label{eq:blockmatrix}
\end{align}
%
\begin{itemize}
\item \emph{(I).} The spectrum of $X$ admits the following decomposition:
\begin{align}
\sigma(X) \ = \ \sigma^{1/2}(H^{\ast} H) \cup \zeta(X),
\end{align}
where $\sigma^{1/2}(H^{\ast} H)$ denotes the pairs of eigenvalues $(\pm \lambda)$ such that $(\pm \lambda)^2$ is an eigenvalue of $H^{\ast} H$. Here, $\zeta(X)$ denotes the set of eigenvalues not in $\sigma^{1/2}(H^{\ast} H)$, all of which are 0.
\item \emph{(II).} The spectrum of $HH^{\ast}$ admits the following decomposition:
\begin{align}
\sigma(HH^{\ast}) \ = \ \sigma(H^{\ast} H) \cup \zeta^2(X),
\end{align}
where $\zeta^2(X)$ denotes the set of eigenvalues not in $\sigma(H^{\ast} H)$, all of which are 0.
\item \emph{(III).} Suppose $\lambda^2 \in \sigma(H^{\ast} H)$ is associated to the following $\ell^2$-normalized eigenvectors:
\begin{align}
\mathbf{v}_{\ast} \ \leftrightsquigarrow \ H^{\ast} H, \quad \mathbf{v}_{\ast, +} \ \leftrightsquigarrow \ HH^{\ast}.
\end{align}
Then $\pm \lambda$ is associated to the following $\ell^2$-normalized eigenvector pair of $X$:
\begin{align}
\pm \lambda \ \leftrightsquigarrow \ \frac{1}{\sqrt{2}} \begin{pmatrix} \mathbf{v}_{\ast,+} \\ \pm \mathbf{v}_{\ast} \end{pmatrix}.
\end{align}
\item \emph{(IV).} Conversely, any eigenvalue pair $\pm \lambda \in \sigma^{1/2}(H^{\ast} H)$ is associated to the following $\ell^2$-normalized eigenvector pair of $X$:
\begin{align}
\pm \lambda \ \leftrightsquigarrow \ \frac{1}{\sqrt{2}} \begin{pmatrix} \mathbf{v}_{\ast, +} \\ \pm \mathbf{v}_{\ast} \end{pmatrix},
\end{align}
where $\mathbf{v}_{\ast, +}$ is an $\ell^2$-normalized eigenvector of $HH^{\ast}$ with eigenvalue $\lambda^2$ and $\mathbf{v}_{\ast}$ is an $\ell^2$-normalized eigenvector of $H^{\ast} H$ with eigenvalue $\lambda^2$. 
\item \emph{(V).} Suppose $\lambda = 0 \in \zeta^2(X)$ is associated to the $\ell^2$-normalized eigenvector $\mathbf{v}_{\lambda}$ of $HH^{\ast}$. Then for some $\lambda' = 0 \in \zeta(X)$, the corresponding $\ell^2$-normalized eigenvector is given by
\begin{align}
\lambda' \ \leftrightsquigarrow \ \begin{pmatrix} \mathbf{v}_{\lambda} \\ 0 \end{pmatrix}.
\end{align}
\item \emph{(VI).} Conversely, suppose $\lambda \in \zeta(X)$. Then $\lambda = 0$ is associated to the following $\ell^2$-normalized eigenvector of $X$:
\begin{align}
\lambda \ \leftrightsquigarrow \ \begin{pmatrix} \mathbf{v}_{\lambda} \\ 0 \end{pmatrix},
\end{align}
where $\mathbf{v}_{\lambda}$ is an $\ell^2$-normalized eigenvector of $HH^{\ast}$ with eigenvalue $\lambda' = 0$. 
\end{itemize}
\end{prop}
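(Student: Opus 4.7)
The key observation is the block identity
\begin{align*}
X^2 \ = \ \begin{pmatrix} H H^{\ast} & 0 \\ 0 & H^{\ast} H \end{pmatrix},
\end{align*}
which reduces the problem to the standard correspondence between the two covariance matrices $H^{\ast} H$ and $H H^{\ast}$. From this identity, any eigenvector $\begin{pmatrix} \mathbf{u} \\ \mathbf{v} \end{pmatrix}$ of $X$ with eigenvalue $\lambda$ satisfies $H H^{\ast} \mathbf{u} = \lambda^2 \mathbf{u}$ and $H^{\ast} H \mathbf{v} = \lambda^2 \mathbf{v}$, giving the inclusion $\sigma(X)^2 \subseteq \sigma(H^{\ast} H) \cup \sigma(H H^{\ast})$ needed for the forward direction of (I).

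For the explicit eigenvector construction in (III)--(IV), given an $\ell^2$-normalized eigenvector $\mathbf{v}_{\ast}$ of $H^{\ast} H$ with eigenvalue $\lambda^2 > 0$, I would \emph{define} $\mathbf{v}_{\ast,+} := \lambda^{-1} H \mathbf{v}_{\ast}$. A direct computation using $H^{\ast} H \mathbf{v}_{\ast} = \lambda^2 \mathbf{v}_{\ast}$ verifies that $\| \mathbf{v}_{\ast,+}\|_2 = 1$ and that $H H^{\ast} \mathbf{v}_{\ast,+} = \lambda^2 \mathbf{v}_{\ast,+}$. A block multiplication then yields
\begin{align*}
X \begin{pmatrix} \mathbf{v}_{\ast,+} \\ \pm \mathbf{v}_{\ast} \end{pmatrix} \ = \ \begin{pmatrix} \pm H \mathbf{v}_{\ast} \\ H^{\ast} \mathbf{v}_{\ast,+} \end{pmatrix} \ = \ \pm \lambda \begin{pmatrix} \mathbf{v}_{\ast,+} \\ \pm \mathbf{v}_{\ast} \end{pmatrix},
\end{align*}
and the prefactor of $1/\sqrt{2}$ is forced by $\ell^2$-normalization of the concatenated vector. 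Running the same construction with roles reversed (starting from $\mathbf{v}_{\ast,+}$ and setting $\mathbf{v}_{\ast} = \lambda^{-1} H^{\ast} \mathbf{v}_{\ast,+}$) proves the converse statement (IV). As a byproduct, the maps $\mathbf{v}_{\ast} \mapsto \lambda^{-1} H \mathbf{v}_{\ast}$ and $\mathbf{v}_{\ast,+} \mapsto \lambda^{-1} H^{\ast} \mathbf{v}_{\ast,+}$ are mutually inverse bijections between the $\lambda^2$-eigenspaces of $H^{\ast} H$ and $H H^{\ast}$, establishing the nonzero part of (II).

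The zero-eigenvalue statements (V)--(VI) are settled by a direct kernel analysis: from the block form, $X \begin{pmatrix} \mathbf{u} \\ \mathbf{v} \end{pmatrix} = 0$ if and only if $H^{\ast} \mathbf{u} = 0$ and $H \mathbf{v} = 0$, i.e. $\mathbf{u} \in \ker H^{\ast} = \ker H H^{\ast}$ and $\mathbf{v} \in \ker H = \ker H^{\ast} H$. In particular, any $\ell^2$-normalized $\mathbf{v}_{\lambda} \in \ker H H^{\ast}$ produces a normalized kernel vector $\begin{pmatrix} \mathbf{v}_{\lambda} \\ 0 \end{pmatrix}$ of $X$, which is exactly (V); the reverse reading gives (VI). A dimension count $\dim \ker X = \dim \ker H H^{\ast} + \dim \ker H^{\ast} H$ together with the bijection from the nonzero part closes the remaining content of (I) and (II).

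The main obstacle is the bookkeeping for multiplicities: one must check that $\mathbf{v}_{\ast} \mapsto \lambda^{-1} H \mathbf{v}_{\ast}$ is linear and takes orthonormal bases of the $\lambda^2$-eigenspace of $H^{\ast} H$ to orthonormal bases of that of $H H^{\ast}$, which follows from $\langle H \mathbf{v}, H \mathbf{w} \rangle = \langle H^{\ast} H \mathbf{v}, \mathbf{w} \rangle = \lambda^2 \langle \mathbf{v}, \mathbf{w} \rangle$ on the eigenspace. Since the proposition is phrased at the level of individual eigenvector pairs rather than as a full unitary equivalence, and since (V)--(VI) only assert the existence of eigenvectors of a specified block form rather than spanning the entire zero eigenspace, this bookkeeping remains mild throughout.
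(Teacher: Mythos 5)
Your proof is correct and takes essentially the same route as the paper, which simply invokes the SVD of $H$ plus dimension counting: your explicit map $\mathbf{v}_{\ast} \mapsto \lambda^{-1} H \mathbf{v}_{\ast}$ \emph{is} the SVD correspondence between right and left singular vectors, and the $X^2$ block-diagonal identity is a standard way to organize that direct computation. The kernel analysis and multiplicity bookkeeping you supply are exactly what the paper is gesturing at with ``dimension-counting.''
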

\begin{remark}
We briefly note that Proposition \ref{prop:spectralcorrespondence} applies to a much more general class of covariance matrices and their linearizations, as it does not refer to any underlying graph or graph structure.
\end{remark}
\begin{proof}
Statements (I) -- (II) are consequence of the SVD (singular value decomposition) of the matrix $H$ and dimension-counting. Statements (III) -- (VI) follow from a direct calculation and dimension-counting.
\end{proof}
\subsection{The Main Result}
We begin with notation for the Stieltjes transforms of the Marchenko-Pastur law and its linearization, respectively:
\begin{align}
m_{\infty}(z) \ &= \ \int_{\R} \ \frac{\varrho_{\infty}(x)}{x - z} \ \d x, \\
m(z) \ &= \ \int_{\R} \ \frac{\varrho(x)}{x - z} \ \d x.
\end{align}
Here, we take $z \in \C_+$ or $z \in \C_-$. We also define the following perturbed Stieltjes transforms to address the ensemble $\mathscr{X}_{\ast, +}$:
\begin{align}
m_{\infty,+}(z) \ &:= \ \gamma m_{\infty}(z) + \frac{\gamma - 1}{z} \ = \ \int_{\R} \ \frac{\gamma \varrho_{\infty}(x) + (\gamma - 1) \delta_0(x)}{x - z} \ \d x, \\
m_{+}(z) \ &:= \ \gamma m(z) + \frac{\gamma - 1}{z} \ = \ \int_{\R} \ \frac{\gamma \rho(x) + (\gamma - 1) \delta_0(x)}{x - z} \ \d x.
\end{align}
This describes the limiting spectral behavior. For the graphs themselves, we define the following Green's functions of the matrix ensembles $\mathscr{X}$, $\mathscr{X}_{\ast}$, and $\mathscr{X}_{\ast,+}$:
\begin{align}
G(z) \ &= \ (X - z)^{-1}, \quad X \in \mathscr{X}; \\
G_{\ast}(z) \ &= \ (X_{\ast} - z)^{-1}, \quad X_{\ast} \in \mathscr{X}_{\ast}; \\
G_{\ast,+}(z) \ &= \ (X_{\ast,+} - z)^{-1}, \quad X_{\ast,+} \in \mathscr{X}_{\ast, +}.
\end{align}
We also define the Stieltjes transforms of each covariance matrix ensemble $\mathscr{X}_{\ast}$ and $\mathscr{X}_{\ast,+}$, which may be realized as the Stieltjes transform of the corresponding empirical spectral distribution in spirit of Proposition \ref{prop:spectralcorrespondence}:
\begin{align}
s_{\ast}(z) \ = \ \frac{1}{N} \on{Tr} G_{\ast}(z), \quad s_{\ast,+}(z) \ = \ \frac{1}{M} \on{Tr} G_{\ast,+}(z).
\end{align}
Lastly, for the ensemble $\mathscr{X}$, we define instead the \emph{partial} Stieltjes transforms which average only over the diagonal terms of a specified color (black or whtie):
\begin{align}
s_b(z) \ = \ \frac{1}{M} \sum_{i = 1}^M \ G_{ii}(z), \quad s_w(z) \ = \ \frac{1}{N} \sum_{k = 1}^N \ G_{kk}(z).
\end{align}
We now introduce domains in the complex plane on which we study the Green's functions of each matrix ensemble. These domains are engineered to avoid the singularities in the Green's functions near the origin, and in the case of the linearized Marchenko-Pastur law, the edge of the support. To this end, we establish notation for the following subsets of the complex plane for any fixed $\e > 0$:
\begin{align}
U_{\e, \pm} \ &:= \ \left\{ z = E + i \eta: \ |E| > \e, \ \eta > 0 \right\}, \\
U_{\e} \ &:= \ U_{\e,+} \cup U_{\e, -}.
\end{align}
We will also need to define the following control parameters:
\begin{align}
D \ &:= \ d_b \wedge \frac{N^2}{d_b^3}, \\
\Phi(z) \ &:= \ \frac{1}{\sqrt{N \eta}} + \frac{1}{\sqrt{D}}, \\
F_z(r) \ = \ F(r) \ &:= \ \left[ \left(1 + \frac{1}{\sqrt{(\lambda_+ - z)(z - \lambda_-)}} \right) r \right] \ \wedge \ \sqrt{r}.
\end{align}
We now present the main result of this paper.
\begin{theorem} \label{theorem:locallaw}
Suppose $\xi = \xi_N$ is a parameter chosen such that the following growth conditions on $D$ and $\eta$ hold:
\begin{align}
\xi \log \xi \ \gg \ \log^2 N, \quad |\eta| \ \gg \ \frac{\xi^2}{N}, \quad D \ \gg \ \eta^2. \label{eq:growthconditionslocallaw}
\end{align}
Then for any fixed $\e > 0$, we have the following estimates with probability at least $1 - e^{-\xi \log \xi}$, uniformly over all $z = E + i \eta \in U_{\e}$ with $\eta$ satisfying the growth condition in \eqref{eq:growthconditionslocallaw}:
\begin{align}
\max_{i} \left| [G_{\ast}(z)]_{ii} - m_{\infty}(z) \right| \ = \ O \left( F_z(\xi \Phi) \right), \ \ \ \max_{i \neq j} \left| [G_{\ast}(z)]_{ij} \right| \ = \ O\left( \frac{\xi \Phi(z^2)}{z} \right). \label{eq:GFlocallawsmall}
\end{align}
Similarly, for any fixed $\e > 0$, we have the following estimates with probability at least $1 - e^{-\xi \log \xi}$, uniformly over all parameters $z = E + i \eta \in U_{\e}$ with $\eta$ satisfying the growth condition in \eqref{eq:growthconditionslocallaw}:
\begin{align}
\max_{i} \left| [G_{\ast,+}(z)]_{ii} - m_{\infty,+}(z) \right| \ = \ O\left( F_z( \xi \Phi) \right), \ \ \ \max_{i \neq j} \left| [G_{\ast,+}(z)]_{ij} \right| \ = \ O \left( \frac{\xi \Phi(z^2)}{z} \right). \label{eq:GFlocallawlarge}
\end{align}
Conditioning on the estimates \eqref{eq:GFlocallawsmall} and \eqref{eq:GFlocallawlarge}, respectively, uniformly over $z = E + i \eta \in U_{\e}$ with $\eta$ satisfying the growth condition in \eqref{eq:growthconditionslocallaw}, we have
\begin{align}
\left| s_{\ast}(z) - m_{\infty}(z) \right| \ = \ O \left( F_z(\xi \Phi) \right), \quad \left| s_{\ast,+}(z) - m_{\infty}(z) \right| \ = \ O \left( F_z(\xi \Phi) \right). \label{eq:STlocallawcov}
\end{align}
Conditioning on the estimates \eqref{eq:GFlocallawsmall}, uniformly over all $z = E + i \eta$ with $\eta$ satisfying the growth condition in \eqref{eq:growthconditionslocallaw}, we have
\begin{align}
\max_{k > M} \left| [G(z)]_{kk} - m(z) \right| \ &= \ O \left( z F_{z^2}(\xi \Phi(z^2)) \right), \label{eq:GFlocallawlinearsmalldiagonal} \\
\max_{M < k < \ell} \ \left| [G(z)]_{k \ell} \right| \ &= \ O \left(\xi \Phi \right).\label{eq:GFlocallawlinearsmalloffdiagonal}
\end{align}
Moreover, conditioning on the estimates \eqref{eq:GFlocallawlarge}, for any fixed $\e > 0$, uniformly over all $z = E + i \eta \in U_{\e}$ with $\eta$ satisfying the growth condition in \eqref{eq:growthconditionslocallaw}, we have
\begin{align}
\max_{i \leq M} \left| [G(z)]_{ii} - m_{+}(z) \right| \ &= \ O \left( z F_{z^2}(\xi \Phi(z^2)) \right),  \label{eq:GFlocallawlinearlargediagonal} \\
\max_{i < j \leq M} \ \left| [G(z)]_{ij} \right| \ &= \ O(\xi \Phi).\label{eq:GFlocallawlinearlargeoffdiagonal}
\end{align}
Conditioning on \eqref{eq:GFlocallawsmall} and \eqref{eq:GFlocallawlarge}, we have the following estimates uniformly over all $z = E + i \eta$ with $\eta$ satisfying the growth condition in \eqref{eq:growthconditionslocallaw}:
\begin{align}
\left| s_b(z) - m_{+}(z) \right| \ &= \ O \left( z F_{z^2}(\xi \Phi(z^2)) \right), \label{eq:partialSTblack} \\
\left| s_w(z) - m(z) \right| \ &= \ O \left( z F_{z^2}(\xi \Phi(z^2)) \right). \label{eq:partialSTwhite}
\end{align}
Lastly, the estimates \eqref{eq:GFlocallawsmall} and \eqref{eq:STlocallawcov} hold without the condition $|E| > \e$ if $\alpha > 1$. The estimates \eqref{eq:GFlocallawlinearlargediagonal} and \eqref{eq:GFlocallawlinearlargeoffdiagonal} hold without the condition $|E| > \e$ if $\alpha = 1$.
\end{theorem}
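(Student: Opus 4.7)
The plan is to adapt the switching-based strategy of \cite{BKY} to the biregular bipartite setting, organizing the argument so that the covariance estimates \eqref{eq:GFlocallawsmall}--\eqref{eq:STlocallawcov} are proved first and the linearized statements \eqref{eq:GFlocallawlinearsmalldiagonal}--\eqref{eq:partialSTwhite} are then deduced via Proposition \ref{prop:spectralcorrespondence}. At the heart of the argument is an approximate self-consistent equation for the diagonal entries of $G_\ast(z)$, derived by combining the Schur complement expansion of $[G_\ast(z)]_{kk}$ with a local switching at the corresponding white vertex $k$. After averaging over the switching, the quadratic form in the $k$-th column of $H$ concentrates around $\gamma m_\infty(z)$, reducing the problem to the standard stability analysis of the Marchenko-Pastur fixed-point equation. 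Off-diagonal entries $[G_\ast(z)]_{ij}$ for $i \neq j$ are handled by the analogous resolvent expansion expressing them as a bilinear form in the $i$-th and $j$-th columns of $H$; simultaneous switchings at the two vertices kill the expectation and control the fluctuation. The treatment of $\mathscr{X}_{\ast,+}$ is parallel, using switchings at black vertices, and the shifted transform $m_{\infty,+}$ is precisely the Stieltjes transform that accounts for the additional $M-N$ zero eigenvalues of $HH^\ast$.

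The passage from the pointwise self-consistent estimate to the precise bounds of Theorem \ref{theorem:locallaw} is a continuity/bootstrap argument in $\eta$. The base step at macroscopic scale $\eta \asymp 1$ follows from \cite{DJ}. Descending in $\eta$ by a small multiplicative factor and using the $\eta^{-2}$ Lipschitz dependence of $G_\ast$ to port the previous-scale bound to the current scale, one iterates the self-consistent equation. The stability constant near the spectral support is encoded in the prefactor $1 + 1/\sqrt{(\lambda_+ - z)(z - \lambda_-)}$ of $F_z$, and the $\wedge \sqrt{r}$ captures the square-root improvement available when the a priori error is smaller than the distance to the support or to the origin. The growth conditions in \eqref{eq:growthconditionslocallaw} are exactly what is required to close the induction: $\xi \log \xi \gg \log^2 N$ allows a union bound over a discrete net in $z$, $\eta \gg \xi^2 / N$ keeps $\xi \Phi$ inside the contraction domain of the self-consistent equation, and $D \gg \eta^2$ dominates the switching error. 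The averaged estimates \eqref{eq:STlocallawcov} follow from the pointwise estimates by summation.

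For the linearized statements I would use the block identity
\begin{align}
(X - z)^{-1} \ = \ (X + z)(X^2 - z^2)^{-1} \ = \ \begin{pmatrix} z G_{\ast,+}(z^2) & H G_\ast(z^2) \\ H^\ast G_{\ast,+}(z^2) & z G_\ast(z^2) \end{pmatrix},
\end{align}
valid for the block structure \eqref{eq:blockmatrix}. The white--white (resp.\ black--black) diagonal block of $G(z)$ is $z G_\ast(z^2)$ (resp.\ $z G_{\ast,+}(z^2)$), so the diagonal estimates \eqref{eq:GFlocallawlinearsmalldiagonal}, \eqref{eq:GFlocallawlinearlargediagonal} and the partial traces \eqref{eq:partialSTblack}, \eqref{eq:partialSTwhite} follow from the covariance estimates by direct substitution, with the overall factor of $z$ and the shift $z \mapsto z^2$ inside $F$ and $\Phi$ reflecting the change of variables. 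The cross-color off-diagonal entries of $G(z)$ are bounded by applying Schwarz and the Ward identity $\sum_j |[G_\ast]_{kj}|^2 = \eta^{-1} \operatorname{Im}[G_\ast]_{kk}$ to the bilinear forms $[H G_\ast(z^2)]_{ij}$ and $[H^\ast G_{\ast,+}(z^2)]_{ij}$. The improvements without the restriction $|E| > \e$ in the regimes $\alpha = 1$ and $\alpha > 1$ follow from the non-degeneracy of the relevant self-consistent equation at the origin, which in these regimes removes the singular prefactor in $F_z$.

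The principal obstacle is the switching-based derivation of the self-consistent equation. The conditional distribution of a column of $H$ given the rest of the biregular graph exhibits nontrivial correlations enforced by the degree constraints, and a single switching induces a bias of size roughly $1/\sqrt{D}$ in any fixed entry. The key technical estimate is that this bias, after summation in the self-consistent equation, contributes an error no larger than the bootstrap target $\xi \Phi$, which is why the condition $D \gg \eta^2$ is needed and why $1/\sqrt{D}$ appears explicitly in $\Phi$. Carefully counting admissible switchings and exploiting sign cancellations so that the residual error is self-improving under iteration are the main technical content of the paper.
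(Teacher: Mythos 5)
Your overall architecture matches the paper's: prove the covariance-matrix estimates first through a switching-based approximate self-consistent equation and a bootstrap in $\eta$, then transfer to the linearized ensemble via the spectral correspondence. One point deserves a caution, though. You describe the derivation as a ``Schur complement expansion of $[G_\ast(z)]_{kk}$,'' but the Schur complement in the usual sense expresses $[G_\ast]_{kk}^{-1}$ through the \emph{minor} resolvent $G^{(k)}$ and then exploits independence of the removed column from $G^{(k)}$; that independence is exactly what fails for biregular graphs, since deleting a white vertex destroys the $(d_b,d_w)$-degree constraints globally. What the paper actually does (Section 5.1) is expand the full-resolvent identity $(X-z)G = \operatorname{Id}$ at the $(k,k)$-entry, obtaining $1 + zG_{kk} = (XG)_{kk}$, and then use Lemma \ref{lemma:conditionalexpectationscederivation} (the switching estimate) to compute $\E_{\mathscr{F}_0}(XG)_{kk} = -\E_{\mathscr{F}_0} s_b G_{kk} + O(\Phi)$, never passing to any minor. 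The final algebraic shape of the equation is the same as in the Schur route, but the switching argument is what replaces the independence one would need for Schur; invoking both is redundant at best, and literally trying the minor route would stall. A similar remark applies to your treatment of the off-diagonal entries, where the paper uses the identity $G_{ij}(HG)_{ii} - G_{ii}(HG)_{ij} = G_{ij}$ on the full resolvent rather than a bilinear form in minors. On the other hand, your block identity
\begin{align}
(X - z)^{-1} \ = \ \begin{pmatrix} z G_{\ast,+}(z^2) & H G_\ast(z^2) \\ H^\ast G_{\ast,+}(z^2) & z G_\ast(z^2) \end{pmatrix}
\end{align}
is a cleaner and more self-contained derivation of Lemma \ref{lemma:GFrelations} than the spectral-theorem argument in the paper, and gives the cross-color blocks as a bonus even though Theorem \ref{theorem:locallaw} does not state estimates for them. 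Two small calibrations: the base step of the bootstrap in the paper (following the iteration from Proposition 2.2 of \cite{BKY}) is at large $\eta$ of order $N^{\delta}$, where $|G_\ast| \leq 1/\eta$ is already small, not at $\eta \asymp 1$ via \cite{DJ}; and the quadratic form concentrates first around $-s_b G_{kk}$ (then $-s_b - (1-\gamma)/z$ after decoupling), with the identification $s_b \approx m_+ = z\,m_{\infty,+}(z^2)$ coming only afterward from the stability analysis, so ``concentrates around $\gamma m_\infty(z)$'' conflates two steps.
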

\begin{remark}
We briefly remark on the repulsion assumption $|E| > \e$ in Theorem \ref{theorem:locallaw}. The removal of this assumption discussed at the end of the statement of Theorem \ref{theorem:locallaw} is a direct consequence of studying the dependence of the singularities of the Green's functions and Stieltjes transforms at the origin with respect to the structural parameter $\alpha$. For example, the presence of a singularity of $m_{\infty}$ at the origin occurs exactly when $\alpha = 1$. Moreover, the singularities in the Stieltjes transforms of matrices and the singularities of $m_{\infty,+}$ at the origin cancel each other out, allowing for a regularization at the origin.
\end{remark}
\begin{remark}
We last remark that if $\alpha = 1$, the covariance matrices $X_{\ast}$ and $X_{\ast,+}$ are equal in law. This comes from symmetry of the bipartite graph between the two vertex sets $V_b$ and $V_w$, i.e. the graph statistics are unchanged upon relabeling the graph. This allows us to remove the assumption $|E| > \e > 0$ for certain estimates in Theorem \ref{theorem:locallaw} in the regime $\alpha = 1$.
\end{remark}
We now discuss important consequences of Theorem \ref{theorem:locallaw}, the first of which is the following result on eigenvector delocalization, i.e. an estimate on the $\ell^{\infty}$-norm of an eigenvector in terms of its $\ell^2$-norm. The proof of this delocalization result will be delegated to a later section after we study in more detail the spectral data of covariance matrices and their linearizations.
\begin{corollary} \label{corollary:delocalization}
\emph{(Eigenvector Delocalization).}

Assume the setting of Theorem \ref{theorem:locallaw}, and suppose $\mathbf{u}$ is an eigenvector of $X_{\ast}$ with eigenvalue $\lambda$. Then with probability at least $1 - e^{-\xi \log \xi}$, we have
\begin{align}
\| \mathbf{u} \|_{\ell^{\infty}} \ = \ O \left( \frac{\xi}{\sqrt{N}} \| \mathbf{u} \|_{\ell^2} \right).
\end{align}
\end{corollary}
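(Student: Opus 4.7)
The plan is the classical spectral decomposition argument: bound the entries of $\mathbf{u}$ by $\operatorname{Im}[G_\ast]_{ii}$, then insert the local law estimate from Theorem \ref{theorem:locallaw}. Fix an $\ell^2$-orthonormal eigenbasis $\{(\lambda_k,\mathbf{u}_k)\}_{k=1}^{N}$ of $X_\ast$ with $(\lambda_{k_0},\mathbf{u}_{k_0}) = (\lambda,\mathbf{u})$. For any $z = E + i\eta$ with $\eta > 0$, the spectral representation of the resolvent gives
\begin{equation*}
\operatorname{Im}[G_\ast(z)]_{ii} \ = \ \sum_{k=1}^{N} \frac{\eta\, |\mathbf{u}_k(i)|^{2}}{(E - \lambda_k)^{2} + \eta^{2}}.
\end{equation*}
Specializing to $E = \lambda$ and keeping only the $k = k_0$ term produces the one-sided bound
\begin{equation*}
|\mathbf{u}(i)|^{2} \ \leq \ \eta \, \operatorname{Im}[G_\ast(\lambda + i\eta)]_{ii},
\end{equation*}
which reduces $\ell^{\infty}$-delocalization to a uniform upper bound on the diagonal entries of $\operatorname{Im} G_\ast$.

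Next I would invoke the diagonal local law \eqref{eq:GFlocallawsmall}: on the good event of probability at least $1 - e^{-\xi \log \xi}$, uniformly over $z \in U_{\e}$ satisfying \eqref{eq:growthconditionslocallaw},
\begin{equation*}
\operatorname{Im}[G_\ast(z)]_{ii} \ \leq \ \operatorname{Im} m_{\infty}(z) \ + \ O\!\left(F_{z}(\xi \Phi)\right).
\end{equation*}
For $\lambda$ away from the edges $\lambda_\pm$ and (when $\alpha = 1$) from the origin, inspection of \eqref{eq:MPlaw} via Poisson extension gives $\operatorname{Im} m_\infty(\lambda + i\eta) = O(1)$ uniformly in $\eta$, while $F_z(\xi \Phi) = o(1)$ under $\eta \gg \xi^{2}/N$. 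Setting $\eta$ at the threshold $\eta = C \xi^{2}/N$ with $C$ just above the constant implicit in the $\gg$ condition therefore yields
\begin{equation*}
|\mathbf{u}(i)|^{2} \ \leq \ C' \cdot \frac{\xi^{2}}{N} \, \|\mathbf{u}\|_{\ell^{2}}^{2},
\end{equation*}
where the $\|\mathbf{u}\|_{\ell^{2}}^{2}$ factor is restored by homogeneity (the argument above was carried out for the $\ell^{2}$-normalized choice). Taking the maximum over $i$ gives $\|\mathbf{u}\|_{\ell^\infty} = O(\xi / \sqrt{N})\,\|\mathbf{u}\|_{\ell^{2}}$ as claimed.

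The main obstacle I anticipate is controlling the edge regime $\lambda$ near $\lambda_{\pm}$ and, when $\alpha = 1$, the regime $\lambda$ near $0$. There $\operatorname{Im} m_{\infty}(\lambda + i\eta)$ is no longer uniformly $O(1)$ in $\eta$, and $F_{z}$ switches to its $\sqrt{r}$ branch; one must track these corrections and re-optimize $\eta$, exploiting the $\wedge \sqrt{r}$ cap built into the definition of $F_z$ to keep $\eta \operatorname{Im}[G_{\ast}(\lambda+i\eta)]_{ii}$ of order $\xi^{2}/N$. A parallel argument, relying on Proposition \ref{prop:spectralcorrespondence} to transfer eigenvector information between $X_\ast$, $X_{\ast,+}$ and the linearization $X$ via \eqref{eq:GFlocallawlarge} and \eqref{eq:GFlocallawlinearsmalldiagonal}--\eqref{eq:GFlocallawlinearlargeoffdiagonal}, then handles delocalization in the other ensembles; this transfer is presumably the reason the paper defers the proof to a later section after the spectral correspondence has been developed in more detail.
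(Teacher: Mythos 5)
Your plan — the spectral-representation argument, keeping a single term, and invoking the diagonal local law at $\eta \asymp \xi^2/N$ — matches the skeleton of the paper's proof, and you correctly suspect that Proposition \ref{prop:spectralcorrespondence} and the linearization $X$ will play a role. However, you have mislocated the role of the linearization, and the obstacle you flag at the end is a genuine gap that your suggested remedy cannot close.

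The problem is the main term, not the error. When $\alpha = 1$ the Marchenko--Pastur density $\varrho_\infty$ has a $1/\sqrt{x}$ singularity at the origin, so $\operatorname{Im}\, m_\infty(\lambda + i\eta)$ is \emph{not} uniformly $O(1)$ for $\lambda$ small — it grows like $\lambda^{-1/2}$. Re-optimizing $\eta$ and exploiting the $\wedge\sqrt{r}$ cap in $F_z$ only tames the fluctuation term $F_z(\xi\Phi)$; it does nothing to the deterministic term $\operatorname{Im}\, m_\infty$, which is what dominates your bound $|\mathbf{u}(i)|^2 \leq \eta\operatorname{Im}[G_\ast]_{ii}$. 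Moreover the diagonal local law \eqref{eq:GFlocallawsmall} for $G_\ast$ is stated only on $U_\e$ unless $\alpha > 1$, so for $\alpha = 1$ you have no input from Theorem \ref{theorem:locallaw} precisely in the regime where you would need it.

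The paper's resolution is to run the entire spectral argument in the linearization $X$, not just to transfer the conclusion afterwards. By Proposition \ref{prop:spectralcorrespondence}, an eigenvector $\mathbf{u}$ of $X_\ast$ appears (up to a $\sqrt{2}$ normalization) as the lower block $(\mathbf{w}(k))_{k>M}$ of an eigenvector $\mathbf{w}$ of $X$ with eigenvalue $\pm\lambda$, so it suffices to control $\sup_{k>M}|\mathbf{w}(k)|$. One then writes, for any $k > M$,
\begin{align}
|\mathbf{w}(k)|^2 \ \leq \ \eta \operatorname{Im}[G(\lambda + i\eta)]_{kk} \ \leq \ \eta\,|z m_\infty(z^2)| + O\bigl(\eta\sqrt{\xi\Phi}\bigr),
\end{align}
using the local law \eqref{eq:GFlocallawlinearsmalldiagonal} for the lower-right block of the linearization. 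The crucial point is that the Stieltjes transform of the \emph{linearized} density, $m(z) = z m_\infty(z^2)$, is uniformly $O(1)$ over $\mathbb{C}_+$ (Lemma \ref{lemma:STMPlinearbound}): the factor $z$ exactly cancels the $1/\sqrt{z^2}$ singularity of $m_\infty$ at the origin. In addition, by Lemma \ref{lemma:bootstrapenergy} the estimate \eqref{eq:GFlocallawlinearsmalldiagonal} holds without the $|E|>\e$ restriction, so there is no forbidden energy window. Setting $\eta \asymp \xi^2/N$ then gives $|\mathbf{w}(k)|^2 = O(\xi^2/N)$ uniformly in $\lambda$, exactly the claim. (The deflated Perron--Frobenius direction is handled separately, trivially, since it is a constant vector.) So: your final paragraph names the right tool, but you should use it to prove the $X_\ast$ statement itself, not merely to propagate it to $X$ and $X_{\ast,+}$ afterwards.
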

We briefly remark that the eigenvector delocalization fails for the larger covariance matrix $X_{\ast,+}$. 
\begin{proof}
First, we note in the case $\mathbf{u} \in \on{Span}(\mathbf{e}_b)$, the result is true trivially. Moreover, by Proposition \ref{prop:spectralcorrespondence}, it suffices to prove the claim for eigenvectors of the linearization $X$, replacing the $\ell^{\infty}$-norm by a supremum over indices $k > M$.

We now take for granted $|zm_{\infty}(z^2)| = O(1)$ uniformly for $z = E + i \eta \in \C_+$; this follows from an elementary analysis of the Stieltjes transform discussed in the appendix of this paper. This allows us to obtain the following string of inequalities with probability at least $1 - e^{-\xi \log \xi}$ and any index $k > M$:
\begin{align}
\left| \mathbf{u}(k) \right|^2 \ &\leq \ \sum_{\mathbf{v}_{\beta} \neq \mathbf{u}} \ \frac{\eta^2 \left| \mathbf{v}_\beta(k) \right|^2}{(\lambda_\beta - \lambda)^2 + \eta^2} \\ 
&= \ \eta \on{Im} [G(\lambda + i \eta)]_{kk} \\ 
&\leq \ \eta \left| zm_{\infty}(z^2) \right| + O(\eta \sqrt{\xi \Phi}) \\
&\leq \ 2 \eta,
\end{align}
where we used the local law for the linearization $X$ to estimate the second line. This completes the derivation of the eigenvector delocalization.
\end{proof}
We conclude this preliminary discussion concerning consequences of Theorem \ref{theorem:locallaw} with the following weak rigidity estimates. We briefly remark that it relies heavily upon the Helffer-Sjostrand formula and functional calculus, and beyond these tools, the local law in Theorem \ref{theorem:locallaw}. To state the result, we first introduce the following definition.
\begin{definition}
For each $i \in [[1, N]]$, we define the $i$-th \emph{classical location}, denoted $\gamma_i$, by the following quantile formula:
\begin{align}
\frac{i}{N} \ = \ \int_{-\infty}^{\gamma_i} \ \varrho_{\infty}(E) \ \d E,
\end{align}
where we recall $\varrho_{\infty}$ denotes the density function of the Marchenko-Pastur law.
\end{definition}
The following consequence of Theorem \ref{theorem:locallaw} will compare the classical location $\gamma_i$ to the $i$-th eigenvalue $\lambda_i$ of the covariance matrix $X_{\ast}$, where the ordering on the eigenvalues is the increasing order.
\begin{corollary}
For any fixed $\kappa > 0$ and index $i \in [[\kappa N, (1-\kappa)N]]$, we have, with probability at least $1 - e^{-\xi \log \xi}$,
\begin{align}
\left| \lambda_i - \gamma_i \right| \ = \ O \left( \frac{\xi^2}{D^{1/4}} \right).
\end{align}
\end{corollary}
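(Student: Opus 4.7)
The plan is to convert the Stieltjes transform local law \eqref{eq:STlocallawcov} into a counting function estimate via the Helffer-Sjostrand functional calculus, and then translate this into eigenvalue rigidity using the fact that $\varrho_{\infty}$ is bounded below in the bulk.

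First I would fix a bulk energy $E$ and introduce a smooth cutoff $\chi_{E, \eta_0}$ approximating $\mathbf{1}_{(-\infty, E]}$ on a scale $\eta_0 > 0$ to be determined. Writing $N(E) = \# \{ i: \lambda_i(X_{\ast}) \leq E \}$ and using $\frac{1}{N} \on{Tr} G_{\ast}(z) = s_{\ast}(z)$ together with the analogous representation for the limiting density, the Helffer-Sjostrand formula yields
\begin{align}
\on{Tr} \chi_{E, \eta_0}(X_{\ast}) \ - \ N \int \chi_{E, \eta_0}(u) \varrho_{\infty}(u) \, \d u \ = \ \frac{N}{\pi} \on{Re} \int_{\C} \partial_{\bar{z}} \tilde{\chi}_{E, \eta_0}(z) \left[ s_{\ast}(z) - m_{\infty}(z) \right] \, \d A(z),
\end{align}
where $\tilde{\chi}_{E, \eta_0}$ is a standard almost-analytic extension supported in an $O(1)$ neighborhood of the real axis. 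Standard estimates of the right-hand side reduce the problem to integrating $|s_{\ast}(u + i \eta) - m_{\infty}(u + i \eta)|$ for $u$ near $E$ against the kernel $|\partial_{\bar z} \tilde{\chi}_{E, \eta_0}|$, together with a boundary contribution at height $\eta_0$.

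Next I would substitute the local law \eqref{eq:STlocallawcov}, which in the bulk simplifies to
\begin{align}
|s_{\ast}(z) - m_{\infty}(z)| \ = \ O \left( \frac{\xi}{\sqrt{N \eta}} + \frac{\xi}{\sqrt{D}} \right).
\end{align}
Executing the Helffer-Sjostrand integral and optimizing $\eta_0$ to balance the Wigner-type contribution $\xi / \sqrt{N \eta}$ against the graph-sparsity contribution $\xi / \sqrt{D}$ would produce a counting function estimate of the shape $|N(E) - N \int_{-\infty}^{E} \varrho_{\infty}| = O(\xi^2 N / D^{1/4})$. Since $\varrho_{\infty}(\gamma_i) \gtrsim 1$ uniformly for $i \in [[\kappa N, (1 - \kappa) N]]$, the inverse function theorem (applied to the classical counting function) converts this into the desired eigenvalue bound $|\lambda_i - \gamma_i| = O(\xi^2 / D^{1/4})$.

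The main obstacle is the careful optimization of the Helffer-Sjostrand scale $\eta_0$ in the presence of the non-decaying error $\xi / \sqrt{D}$: unlike the Wigner case, this term is insensitive to $\eta$, so growing $\eta$ does not help and a genuine compromise between the two error contributions must be made, which is what produces the weaker $D^{1/4}$ denominator rather than the optimal rigidity scale $N^{-1}$. A secondary technical step is verifying, either from the local law at a slightly larger scale or by a deterministic spectrum-confinement argument, that $\sigma(X_{\ast})$ is contained in an $O(1)$ neighborhood of $[\lambda_{-}, \lambda_{+}]$ with the claimed probability, so that the almost-analytic extension may be safely truncated at $O(1)$ scale without incurring additional boundary terms.
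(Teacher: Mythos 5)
Your proposal follows essentially the same route as the paper: the paper's entire ``proof'' of this corollary is to cite Section 5 of \cite{BHKY} and Section 7 of \cite{LY} for the standard Helffer-Sj\"ostrand rigidity argument applied to the local law of Theorem \ref{theorem:locallaw}, which is exactly the scheme you describe (mollified indicator, almost-analytic extension, substitute the local law, optimize the smoothing scale, invert the classical counting function using that $\varrho_{\infty}$ is bounded below in the bulk). The paper does not spell out the optimization producing the $D^{1/4}$ denominator or the spectrum-confinement step either, so your sketch is, if anything, somewhat more detailed than what appears in the text.
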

For details of the proof, we refer to Section 5 in \cite{BHKY} and Section 7 in \cite{LY}.

We now give an outline for the derivation of the local law. The proof will roughly consist of the following three steps:
\begin{itemize}
\item (I). The first step will be to adapt the methods in \cite{BKY} to define and study a method of resampling biregular graphs in $\Omega$. The resampling will be generated by local operations on a given graph known as \emph{switchings}, which we will define more precisely in a later section. The local nature of the resampling method will help us derive equations exploiting the probabilistic stability of the Green's function under these switchings.
\item (II). The second step will be to study the Green's functions of the three matrix ensemble simultaneously. This includes both a preliminary analysis and a further analysis using the switching dynamics established in the previous step. In particular, we derive an approximate self-consistent equation for the diagonal entries of the Green's function and study its stability properties. As in \cite{BKY}, this will help us compare the diagonal of the Green's function to the associated Stieltjes transform. The equation in \cite{BKY}, however, contains a constant leading-order coefficient whereas for covariance matrices the leading-order coefficient is nonconstant. We adapt the methods suitably to handle this nonlinearity.
\end{itemize}
%
%
%
\section{Switchings on Bipartite Graphs}
We begin by introducing notation necessary to define switchings on biregular graphs. Switchings will be local operations on the biregular graphs, so we will establish notation for vertices and edges containing said vertices as follows.
\begin{notation}
A generic vertex in $V_b$ (resp. $V_w$) will be denoted by $v_b$ (resp. $v_w$). 

For a fixed graph $E \in \Omega$, we will denote the edges in $E$ containing $v_b$ by $\{ e_{b, \mu} \}_{\mu = 1}^{d_b}$. Moreover, for a fixed edge $e_{b, \mu}$ containing $v_b$, we will denote the neighboring vertex by $v_{b,\mu}$, so that $e_{b, \mu} = v_b v_{b, \mu}$. 

Similarly, the edges in $E$ containing $v_w$ will be denoted by $\{ e_{w, \nu} \}_{\nu = 1}^{d_w}$. For a fixed edge $e_{w,\nu}$ containing $v_w$, we will denote the neighboring vertex by $v_{w, \nu}$.

For a fixed vertex $v_b \in V_b$, we establish the notation for the set of edges not containing $v_b$:
\begin{align}
U_{v_b} \ := \ \left\{ \on{edges} \ e \in E: \ v_b \not\in e \right\}.
\end{align}
Similarly for a fixed vertex $v_w \in V_w$, we define the following set of edges not containing $v_w$:
\begin{align}
U_{v_w} \ := \ \left\{ \on{edges} \ e \in E: \ v_w \not\in e \right\}.
\end{align}
\end{notation}
We may now begin to define a switching on a generic graph $E \in \Omega$. To this end, we fix a black vertex $v_b \in V_b$ and an edge $e_{v, \mu}$ for some $\mu \in [[1, d_b]]$. We define the following space of subgraphs of $E$:
\begin{align}
\mathbf{S}_{v_b, \mu, E} \ := \ \left\{ S \subset E: \ S \ = \ \{ e_{b, \mu}, p_{b, \mu}, q_{b,\mu} \}, \ p_{b,\mu} \neq q_{b, \mu} \in U_{v_b} \right\}.
\end{align}
In words, the set $\mathbf{S}_{v_b, \mu, E}$ is the set of graphs consisting of the edges $e_{b,\mu}$ and any two distinct edges $p_{b,\mu}$ and $q_{b,\mu}$, neither of which contains the vertex $v_b$. Similarly, we may define for a fixed white vertex $v_w \in V_w$ and edge $e_{w, \nu}$, for some $\nu \in [[1, d_w]]$, the same set of graphs:
\begin{align}
\mathbf{S}_{v_w, \mu, E} \ := \ \left\{ S \subset E: \ S \ = \ \{ e_{w, \nu}, p_{w, \nu}, q_{w, \nu} \}: \ p_{w, \nu} \neq q_{w,\nu} \in U_{v_w} \right\}.
\end{align}
%
\begin{notation}
A generic graph in $\mathbf{S}_{v_b, \mu, E}$ will be denoted by $S_{b, \mu}$. A generic graph in $\mathbf{S}_{v_w, \nu, E}$ will be denoted by $S_{w, \nu}$. 
\end{notation}
The set $\mathbf{S}_{v_b, \mu, E}$ contains the edge-local data along which switchings on graphs will be defined. To make this precise, we need to introduce the following indicator functions. First, we define the following configuration vectors for fixed vertices $v_b \in V_b$ and $v_w \in V_w$:
\begin{align}
\mathbf{S}_{v_b} \ := \ \left( S_{b, \mu} \right)_{\mu = 1}^{d_b}, \quad \mathbf{S}_{v_w} \ := \ \left( S_{w, \nu} \right)_{\nu = 1}^{d_w}.
\end{align}
With this notation, we define the following indicator functions that detect graph properties in $S_{b, \mu}$ and $S_{w, \nu}$. 
\begin{align}
I(S_{b, \mu}) \ &= \ \mathbf{1} \left( |[S_{b, \mu}]| = 6 \right), \label{eq:I} \\
J(\mathbf{S}_{v_b}, \mu) \ &= \ \prod_{\mu' \neq \mu} \ \mathbf{1} \left( [S_{b, \mu}] \cap [S_{b, \mu'}] = \{ v_b \} \right), \label{eq:J} \\
W(\mathbf{S}_{v_b}) \ &= \ \left\{ \mu: \ I(S_{b, \mu}) J(\mathbf{S}_{v_b}, \mu) = 1 \right\}. \label{eq:W}
\end{align}
For white vertices $v_w \in V_w$, the functions $I, J$ and $W$ retain the same definition upon replacing $b$ with $w$ and $\mu$ with $\nu$.

We now define the augmented probability spaces $\wt{\Omega}$ which will make the switchings systematic from the perspective of Markovian dynamics. For a fixed black vertex $v_b \in V_b$ and a fixed white vertex $v_w \in V_w$, we define the following augmented space:
\begin{align}
\wt{\Omega} \ &= \left\{ \left( E, \mathbf{S}_{v_b}, \mathbf{S}_{v_w} \right), \quad E \in \Omega, \ \mathbf{S}_{v_b} \in \prod_{\mu = 1}^{d_b} \mathbf{S}_{v_b, \mu, E}, \ \mathbf{S}_{v_w} \in \prod_{\nu = 1}^{d_w} \mathbf{S}_{v_w, \nu, E} \right\}.
\end{align}
We now precisely define switchings by defining dynamics on $\wt{\Omega}$. To this end we define switchings on configuration vectors $\mathbf{S}_{v_b}$ and $\mathbf{S}_{v_w}$; we first focus on the configuration vectors for black vertices. 

Fix a label $\mu$ and consider a component $S_{b, \mu}$ of a uniformly sampled configuration vector $\mathbf{S}_{v_b}$. Precisely, the components of $\mathbf{S}_{v_b}$ are sampled jointly uniformly and independently from $\mathbf{S}_{v_b, \mu, E}$, where $E \in \Omega$ is uniform over all $\mu$ and sampled uniformly. We now define the following map:
\begin{align}
T_b: \prod_{\mu = 1}^{d_b} \ \mathbf{S}_{v_b, \mu, E} \ \longrightarrow \ \prod_{\mu = 1}^{d_b} \ \mathbf{S}_{v_b, \mu, E'}
\end{align}
where $E' \in \Omega$ is possibly different from $E$. The map is given as follows: for any $\mu$, we define the map $T_{b,\mu}$
\begin{align}
T_{b,\mu}(S_{b, \mu}) \ = \ \begin{cases}
	S_{b, \mu} & \mu \not\in W(\mathbf{S}_{v_b}) \\
	(S_{b, \mu}, s_{b, \mu}) & \mu \in W(\mathbf{S}_{v_b})
\end{cases}.
\end{align}
We define the graph $(S_{b, \mu}, s_{b,\mu})$ as follows; this is where we now introduce randomness into the dynamics $T_b$. Suppose $\mu \in W(\mathbf{S}_{v_b})$, in which case $S_{b,\mu}$ is 1-regular and bipartite with respect to the vertex sets $([S_{v_b}], V_1, V_2)$. Consider the set of 1-regular bipartite graphs with respect to the vertex set $([S_{b,\mu}], V_1, V_2)$. In words, this is the set of 1-regular graphs on $[S_{b, \mu}]$ such that, upon replacing $S_{b,\mu}$ with any such graph, the global graph $E$ remains biregular. We now define $(S_{b, \mu}, s_{b,\mu})$ to be drawn from this set uniformly at random conditioning on the event $(S_{b, \mu}, s_{b, \mu}) \neq S_{b,\mu}$. Lastly, we define the following global dynamics:
\begin{align}
T_b \ = \ \prod_{\mu = 1}^{d_b} \ T_{b,\mu},
\end{align}
where the product is taken as composition. We note this product is independent of the order of composition; this is a consequence of the definition of the functions $I, J$ and $W$. For white vertices $v_w \in V_w$, we define the map $T_w$ by replacing all black indices $b$ and white indices $w$.

We note that the maps $T_b$ and $T_w$ define maps on $\wt{\Omega}$, because we are allowed to change the underlying graph $E$ when varying over the space $\wt{\Omega}$; this is the utility of the almost-product representation of $\wt{\Omega}$. This allows us to finally define switchings of a biregular graph.
\begin{definition}
For a fixed black vertex $v_b \in V_b$ and a fixed label $\mu \in [[1, d_b]]$, the \emph{local switching} at $v_b$ along $\mu$ is the map $T_{b,\mu}$. The \emph{global switching} is the map $T_b$.

Similarly, for a fixed white vertex $v_w \in V_w$ and a fixed label $\nu \in [[1, d_w]]$, the \emph{local switching} at $v_w$ along $\nu$ is the map $T_{w, \nu}$. The \emph{global switching} at $v_w$ is the map $T_w$.
\end{definition}
\begin{remark}
We note our construction, technically, implies the mappings $T_{b, \mu}$ and $T_{w, \nu}$ are random mappings on the augmented space $\wt{\Omega}$.  Via this construction, we obtain a probability measure $\wt{\Omega}$ induced by the uniform measure and a uniform sampling of switchings. To obtain an honest mapping on the original space $\Omega$, we may instead construct deterministic mappings by averaging over the random switchings. For precise details, we cite \cite{BKY}.
\end{remark}
\subsection{Switchings on Adjacency Matrices}
We now aim to translate the combinatorics of graph switchings into analysis of adjacency matrices. Suppose $E \in \Omega$ is a biregular graph with adjacency matrix $A$. We will fix the following notation.
\begin{notation}
For an edge $e = ij$ on the vertex set $\mathscr{V}$, we let $\Delta_{ij}$ denote the adjacency matrix of the graph on $\mathscr{V}$ consisting only of the edge $e$. In particular, $\Delta_{ij}$ is the matrix whose entries are given by
\begin{align}
(\Delta_{ij})_{k\ell} \ = \ \delta_{ik} \delta_{j \ell} + \delta_{i \ell} \delta_{jk}.
\end{align}
\end{notation}
In the context of switchings on biregular graphs, the matrices $\Delta_{ij}$ are perturbations of adjacency matrices. This is made precise in the following definition.
\begin{definition}
Fix a black vertex $v_b \in V_b$ and a label $\mu \in [[1, d_b]]$. A \emph{local} switching of $A$, denoted $T_{b,\mu}$, at $v_b$ along $\mu$ is given by the following formula:
\begin{align}
T_{b,\mu}(A) \ = \ A - S_{b,\mu} + (S_{b, \mu}, s_{b,\mu}), \label{eq:switchmatrix}
\end{align}
where $S_{b,\mu}$ is a component of a random, uniformly sampled configuration vector $\mathbf{S}_{v_b}$. A \emph{global} switching of $A$, denoted $T_b$, is the composition of the random mappings $T_{b,\mu}$.

Similarly, we may define local switchings and global switchings of adjacency matrices for white vertices by replacing the black subscript $b$ with the white subscript $w$, and replacing the label $\mu$ with $\nu$.
\end{definition}
Clearly, a local or global switching of an adjacency matrix is the adjacency matrix corresponding to a local or global switching of the underlying graph. To realize the matrices $\Delta_{ij}$ as perturbations, we will rewrite the formula defining $T_{b,\mu}$ as follows. As usual, we carry out the discussion for black vertices $v_b \in V_b$, though the details for white vertices $v_b$ follow analogously. 

First, we recall the following notation for a component $S_{b,\mu} \in \mathbf{S}_{v_b, \mu, E}$ of a configuration vector $\mathbf{S}_{v_b}$:
\begin{align}
S_{b,\mu} \ := \ \left\{ e_{b,\mu}, p_{b,\mu}, q_{b,\mu} \right\},
\end{align}
subject to the constraint that $S_{b,\mu}$ contains three distinct edges. 
\begin{notation}
We will denote the vertices of $p_{b,\mu}$ by $a_{b,p,\mu} \in V_b$ and $a_{w,p,\mu} \in V_w$. Similarly, we will denote the vertices of $q_{b,\mu}$ by $a_{b,q,\mu} \in V_b$ and $a_{w,q,\mu} \in V_w$.
\end{notation}
With this notation, we may rewrite the random mapping $T_{b,\mu}$ as follows:
\begin{align}
T_{b,\mu}(A) \ = \ A - \left( \Delta_{v_b, v_{b,\mu}} +  \Delta_{a_{b,p,\mu} a_{w,p,\mu}} + \Delta_{a_{b,q,\mu} a_{w,q,\mu}} \right) + \left( \Delta_{v_b, x} + \Delta_{a_{b,p,\mu}, y} + \Delta_{a_{b,q,\mu}, z} \right), \label{eq:perturbswitchmatrix}
\end{align}
where we recall $e_{b,\mu} = v_b v_{b,\mu}$. Here, the variables $x, y, z$ are three distinct vertices sampled  from the set of white vertices $\{ v_{b,\mu}, a_{w,p,\mu}, a_{w,q,\mu}\}$ conditioning on the following constraint on ordered triples:
\begin{align}
(x, y, z) \ \neq \ (v_{b,\mu}, a_{w,p,\mu}, a_{w,q,\mu}).
\end{align}
%
\subsection{Probability Estimates on Vertices}
In this discussion, we obtain estimates on the distribution of graph vertices \emph{after} performing switchings. The main estimates here show that the vertices are approximately uniformly distributed, which we make precise in the following definition.
\begin{definition} \label{definition:approxuniform}
Suppose $S$ is a finite set and $X$ is an $S$-valued random variable. We say (the distribution of) $X$ is \emph{approximately uniform} if the following bound on total variation holds:
\begin{align}
\sum_{s \in S} \ \left| \mathbb{P} \left( X = s \right) - \frac{1}{|S|} \right| \ \leq \ O \left( \frac{1}{\sqrt{d_w D}} \right).
\end{align}
\end{definition}
We now introduce the following $\sigma$-algebras on $\wt{\Omega}$. These $\sigma$-algebras will allow us to focus on edge-local features of graphs $E \in \Omega$ upon conditioning on the global data $E$.
\begin{definition}
For a fixed label $\mu \in [[0, d_b]]$, we define the following $\sigma$-algebras:
\begin{align}
\mathscr{F}_{\mu} \ &:= \ \sigma \left( E, (S_{b,1}, s_{b,1}), \ldots, (S_{b,\mu}, s_{b,\mu}) \right), \\
\mathscr{G}_{\mu} \ &:= \ \sigma \left( E \left( S_{b, \mu'}, s_{b, \mu'} \right)_{\mu' \neq \mu} \right).
\end{align}
We similarly define the $\sigma$-algebras $\mathscr{F}_{\nu}$ and $\mathscr{G}_{\nu}$ for $\nu \in [[1, d_w]]$ for white vertices.
\end{definition}
In particular, conditioning on $\mathscr{F}_0$ corresponds to conditioning on the graph $E$ only. The last piece of probabilistic data we introduce is the following notation, which will allow us to compare i.i.d. switchings on biregular graphs.
\begin{notation}
Suppose $X$ is a random variable on the graph data $E, \{ (S_{b, \mu}, s_{b,\mu}) \}_{\mu},  \{(S_{w, \nu}, s_{w, \nu}) \}_{\nu}$. Then $\wt{X}$ denotes a random variable on the variables $\wt{E}, \{ (\wt{S}_{b, \mu}, \wt{s}_{b,\mu})_{\mu}, \{ (\wt{S}_{w, \nu}, \wt{s}_{w,\nu}) \}$, where the tildes on the graph data denote i.i.d. resamplings. 
\end{notation}
\begin{notation}
For notational simplicity, by $p_{\mu}, q_{\mu}$ or $p_{\nu}, q_{\nu}$, we will refer to either $p_{b, \mu}, q_{b, \mu}$ or $p_{w, \nu}, q_{w, \nu}$, respectively, whenever the discussion applies to both situations.
\end{notation}
We now focus on obtaining an estimate on the distribution of the pair of edges $(p_{\mu}, q_{\mu})$, and similarly for $(p_{\nu}, q_{\nu})$. As with all results concerning switchings from here on, details of proofs resemble those of Section 6 in \cite{BKY}, so we omit details whenever redundant.
\begin{lemma} \label{lemma:approxuniformedges}
Conditioned on $\mathscr{G}_{\mu}$, the pair $(p_\mu, q_\mu)$ is approximately uniform, i.e., for any bounded symmetric function $F$, we have
\begin{align}
\E_{\mathscr{G}_\mu} F(p_\mu, q_\mu) \ = \ \frac{1}{(N d_w)^2} \sum_{p,q \in E} \ F(p,q) \ + \ O \left( \frac{1}{N} \| F \|_{\infty} \right). \label{eq:approxuniformjointpair}
\end{align}
Similarly, for any bounded function $F$, we have
\begin{align}
\E_{\mathscr{G}_{\mu}, q_\mu} F(p_\mu) \ = \ \frac{1}{N d_w} \sum_{p \in E} \ F(p) \ + \ O \left( \frac{1}{N} \| F \|_{\infty} \right). \label{eq:approxuniformmarginal}
\end{align}
\end{lemma}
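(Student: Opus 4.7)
The plan is to identify the conditional law of $(p_\mu, q_\mu)$ under $\mathbb{P}_{\mathscr{G}_\mu}$ and compare it in total variation to the uniform law on $E \times E$ appearing on the right-hand side of \eqref{eq:approxuniformjointpair}. The comparison naturally splits into two steps: (i) passing from $\mathbb{P}_{\mathscr{G}_\mu}$ to $\mathbb{P}_{\mathscr{F}_0}$, i.e., conditioning only on $E$, and (ii) comparing the uniform law on pairs of distinct edges in $U_{v_b}$ to the uniform law on all pairs $(p,q) \in E \times E$.

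For step (i), the key observation is that under $\mathbb{P}(\,\cdot \mid \mathscr{F}_0)$, the components $\{S_{b,\mu'}\}_{\mu'}$ are i.i.d.\ uniform on $\mathbf{S}_{v_b,\mu',E}$, so the marginal of $S_{b,\mu}$ is uniform on $\mathbf{S}_{v_b,\mu,E}$. The additional conditioning in $\mathscr{G}_\mu$ introduces a Radon-Nikodym factor $R(s) \propto \prod_{\mu' \neq \mu} \mathbb{P}(s_{b,\mu'} = s_{b,\mu'}^{\ast} \mid S_{b,\mu} = s, \mathbf{S}_{v_b}^{(-\mu)}, E)$, whose $s$-dependence enters only through $\mathbf{1}_{\mu' \in W(\mathbf{S}_{v_b})}$, and specifically through whether $[s] \cap [S_{b,\mu'}^{\ast}] = \{v_b\}$---this being the sole clause of $J(\mathbf{S}_{v_b}, \mu')$ that involves $S_{b,\mu}$. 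Since each $S_{b,\mu'}^{\ast}$ contains at most five non-$v_b$ vertices and each such vertex lies on at most $d_w$ edges, the fraction of $s$ for which any one such constraint fails is $O(d_w/|U_{v_b}|) = O(d_w/(Md_b)) = O(1/M) \leq O(1/N)$, using $Md_b = Nd_w$ and $\alpha \geq 1$. Following the switching analysis of \cite{BKY} Section 6, combining the $d_b - 1$ such constraints yields a total-variation distance of $O(1/N)$ between the law of $(p_\mu, q_\mu)$ under $\mathbb{P}_{\mathscr{G}_\mu}$ and the uniform law on pairs of distinct edges in $U_{v_b}$.

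For step (ii), the pairs in $E \times E$ excluded from pairs of distinct edges in $U_{v_b}$---those with $p = q$ or with at least one of $p, q$ containing $v_b$---carry total mass $O(d_b/(Md_b)) = O(1/M) \leq O(1/N)$ under the uniform law on $E \times E$, and the renormalization factor differs by the same order. Testing against $F$ with $|F| \leq \|F\|_\infty$ gives \eqref{eq:approxuniformjointpair}. The marginal estimate \eqref{eq:approxuniformmarginal} follows by applying \eqref{eq:approxuniformjointpair} to the symmetric test function $\wt{F}(p,q) = \tfrac{1}{2}(F(p) + F(q))$, or by repeating the same counting directly with single-edge marginals. The principal obstacle will be carefully controlling the combined effect of the $d_b - 1$ Bayesian constraints in step (i) without accumulating an unnecessary factor of $d_b$; the switching framework of \cite{BKY} is tailored precisely to handle this, exploiting the near-orthogonality of the individual indicator constraints at the relevant scale.
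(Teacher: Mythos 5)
Your decomposition into steps (i) and (ii) is natural, and step (ii)---comparing the uniform law on ordered pairs of distinct edges in $U_{v_b}$ to the normalized sum over $E\times E$---is essentially the paper's entire argument: the excluded pairs (the diagonal $p=q$ and pairs with an edge at $v_b$) have relative mass $O(d_b/(Nd_w)) = O(1/M) \leq O(1/N)$, and the renormalization constant differs from $(Nd_w)^{-2}$ by the same order. The issue is with step (i). In the construction of the augmented space $\wt\Omega$, the components $(S_{b,\mu'}, s_{b,\mu'})$ are, conditionally on $E$, sampled jointly independently across $\mu'$, with each auxiliary variable $s_{b,\mu'}$ a function of $S_{b,\mu'}$ and independent randomness only---not of the other components. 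Consequently $S_{b,\mu}$ is conditionally independent of $\mathscr{G}_\mu = \sigma\bigl(E, (S_{b,\mu'},s_{b,\mu'})_{\mu'\neq\mu}\bigr)$ given $E$, so its law under $\mathbb{P}_{\mathscr{G}_\mu}$ is \emph{exactly} uniform on $\mathbf{S}_{v_b,\mu,E}$, and there is no Radon--Nikodym factor to control. The paper's proof simply writes $\E_{\mathscr{G}_\mu} F(p_\mu,q_\mu)$ as an exact average over ordered pairs of distinct edges not incident to $v_b$, normalized by $|U_{v_b}|\bigl(|U_{v_b}|-1\bigr)$, and then compares the normalization to $(Nd_w)^{-2}$.

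Your proposed step (i) would moreover be genuinely problematic if the conditional law were not exactly uniform: combining $d_b-1$ constraints each with total-variation cost $O(1/M)$ naively yields $O(d_b/M) = O\bigl(d_b^2/(Nd_w)\bigr)$, which is $O(d_b/N)$ and not the required $O(1/N)$. You flag this accumulation issue as the ``principal obstacle,'' but you resolve it only by a gesture toward the switching framework of \cite{BKY} and a claimed ``near-orthogonality'' of the constraints---which is circular here, since the relevant feature of that framework is precisely that the conditional law is exactly uniform, not that some nontrivial Bayesian product telescopes. In short, the bound claimed in your step (i) is not established as written; the step should be deleted rather than repaired, after which your step (ii) reduces to the paper's proof.
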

\begin{proof}
Assume we resample about $v_b \in V_b$; the case for $v_w \in V_w$ follows analogously. By definition, we have
\begin{align}
\E_{\mathscr{G}_{\mu}} F(p_{\mu}, q_\mu) \ = \ \frac{1}{(Nd_w - d_w)(Nd_w - d_w - 1)} \sum_{p \in E_{v_b}} \ F(p),
\end{align}
where $E_{v_b}$ is the set of edges in $E$ that are not incident to $v_b$. Then, \eqref{eq:approxuniformjointpair} follows from the following estimate
\begin{align}
\frac{1}{(Nd_w - d_w)(Nd_w - d_w - 1)} \ = \ \frac{1}{(N d_w)^2} \ + \ O \left( \frac{1}{N^3 d_w^2} \right)
\end{align}
as well as the estimate $| E_{v_b} | \leq (N d_w)^2$, and lastly the estimate $|E_{v_b}^C| \leq Nd_w$. This last upper bound follows combinatorially; for details, see the proof of Lemma 6.2 in \cite{BKY}. The estimate \eqref{eq:approxuniformmarginal} follows from a similar argument.
\end{proof}
Because an edge is uniquely determined by its vertices in the graph, we automatically deduce from Lemma \ref{lemma:approxuniformedges} the following approximately uniform estimate for resampled vertices as well.
\begin{corollary} \label{corollary:switchingverticesapproxuniform}
Conditioned on $\mathscr{G}_\mu$, the pair $(p_{\mu}(b), q_{\mu}(b))$ (resp. $(p_{\mu}(w), q_{\mu}(w))$) is approximately uniform. 

Similarly, conditioned on $\mathscr{G}_\mu$ and $q_{\mu}(b)$ (resp. $q_{\mu}(w)$), the random variable $p_{\mu}(b)$ (resp. $p_{\mu}(w)$) is approximately uniform.
\end{corollary}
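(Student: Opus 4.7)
The plan is to derive this corollary as a direct pushforward of Lemma~\ref{lemma:approxuniformedges} under the endpoint maps $p \mapsto p(b)$ and $p \mapsto p(w)$. The key combinatorial input is biregularity: every $u \in V_b$ is the black endpoint of exactly $d_b$ edges, and every $u \in V_w$ is the white endpoint of exactly $d_w$ edges, so the pushforward of the uniform distribution on $E$ under either endpoint map is exactly uniform on the corresponding vertex set. Approximate uniformity on edges therefore translates directly to approximate uniformity on vertices.

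Concretely, for the joint black-endpoint claim, I would fix an arbitrary bounded symmetric $G : V_b \times V_b \to \R$ and apply Lemma~\ref{lemma:approxuniformedges} with the test function $F(p,q) := G(p(b), q(b))$. Biregularity gives
\begin{equation*}
\sum_{p,q \in E} F(p,q) \;=\; d_b^2 \sum_{u,v \in V_b} G(u,v),
\end{equation*}
and combining with the edge-count identity $Nd_w = Md_b$ from \eqref{eq:countedges} converts the factor $d_b^2/(Nd_w)^2$ into $1/M^2$, the uniform density on $V_b \times V_b$. Since $\|F\|_\infty \leq \|G\|_\infty$, the error $O(\|F\|_\infty/N)$ in Lemma~\ref{lemma:approxuniformedges} transfers verbatim. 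Dualizing by taking $G$ to be $\mathrm{sgn}(\mathbb{P}_{\mathscr{G}_\mu}((p_\mu(b),q_\mu(b)) = \cdot) - M^{-2})$ converts this expectation bound into a total variation bound, giving approximate uniformity in the sense of Definition~\ref{definition:approxuniform}. The white case is identical, using that each $w \in V_w$ lies in $d_w$ edges so that $d_w^2/(Nd_w)^2 = 1/N^2$.

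For the marginal claim, I would apply \eqref{eq:approxuniformmarginal} of Lemma~\ref{lemma:approxuniformedges} with $F(p) := G(p(b))$ (resp.\ $G(p(w))$), noting that $\sum_{p \in E} F(p) = d_b \sum_{u \in V_b} G(u)$ (resp.\ $d_w \sum_{u \in V_w} G(u)$), and normalizing by $Nd_w = Md_b$ gives the uniform density $1/M$ (resp.\ $1/N$). The extra conditioning on $q_\mu$ present in \eqref{eq:approxuniformmarginal} is compatible with the claim because the edge $q_\mu$ determines its endpoint $q_\mu(b)$, so conditioning on $(\mathscr{G}_\mu, q_\mu(b))$ is coarser than conditioning on $(\mathscr{G}_\mu, q_\mu)$ and the required bound is preserved by iterated expectation.

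There is no real obstacle here; the statement is a clean consequence of biregularity, which ensures that the endpoint map is a $d_b$-to-$1$ (resp.\ $d_w$-to-$1$) cover whose fiber multiplicity cancels exactly with the edge-count identity $Nd_w = Md_b$. The only bookkeeping needed is verifying this arithmetic and checking that the supremum norm and the error order survive the pushforward, both of which are immediate.
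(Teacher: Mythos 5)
Your proof is correct and takes essentially the same route as the paper, which simply applies Lemma \ref{lemma:approxuniformedges} to a test function of the form $F(p_\mu, q_\mu) = f(p_\mu(b), q_\mu(b))$. The extra arithmetic you supply---the $d_b$-to-$1$ fiber count from biregularity cancelling against $N d_w = M d_b$, and the duality step to convert the expectation bound into a total variation bound---is exactly the bookkeeping the paper leaves implicit.
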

\begin{proof}
This follows immediately upon applying Lemma \ref{lemma:approxuniformedges} to the function $F(p_{\mu}, q_{\mu}) = f(p_{\mu}(b), q_\mu(b))$.
\end{proof}
To fully exploit the resampling dynamics, we need a lower bound on the probability that a local switching $S_{b, \mu}, s_{b, \mu}$ around a vertex $v_b \in V_b$ does not leave the graph fixed. In particular, we need an estimate for the probability of the event $\mu \in W(\mathbf{S}_{v_b})$ where here $\mu$ is fixed and the set $W$ is viewed as random. As discussed in \cite{BKY}, to provide an estimate, the naive approach to estimating this probability conditioning on $\mathscr{G}_\mu$ fails in an exceptional set. Precisely, suppose the $\mu$-th neighbor $v_{b, \mu}$ of $v_b$ lives in $S_{b, \mu'}$ for some $\mu' \neq \mu$. In this case, almost surely, we have $[S_{b, \mu}] \cap [S_{b, \mu'}]$ is nontrivial. It turns out this is the only obstruction, so we aim to show that $v_{b, \mu} \in [S_{b, \mu'}]$ occurs with low probability for any $\mu' \neq \mu$. 

Formally, we define the following indicator random variable which detects this exceptional set:
\begin{align}
h(\mathbf{S}_{v_b}, \mu) \ = \ \prod_{\mu' \neq \mu} \ \mathbf{1} \left( v_{b, \mu} \in S_{b, \mu'} \right).
\end{align}
Thus, the estimates we need are given in the following result.
\begin{lemma} \label{lemma:distributionneighbors}
For any neighbor index $\mu$, we have
\begin{align}
\mathbb{P}_{\mathscr{G}_\mu} \left[ I(S_{b, \mu}) J(\mathbf{S}_{v_b},\mu) \ = \ h(\mathbf{S}_{v_b}, \mu) \right] \ \geq \ 1 - O \left( \frac{d_b}{N} \right). \label{eq:distributionneighborsestimate1}
\end{align}
Moreover, we have
\begin{align}
\mathbb{P}_{\mathscr{F}_0} \left[ h(\mathbf{S}_{v_b}, \mu) = 1 \right] \ \geq \ 1 - O \left( \frac{d_b}{N} \right). \label{eq:distributionneighborsestimate2}
\end{align}
\end{lemma}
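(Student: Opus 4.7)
The plan is to derive both estimates via a union bound, using the approximate uniformity of the resampled edges from Lemma~\ref{lemma:approxuniformedges} as the key probabilistic input. Throughout I use $d_w \geq d_b$, which follows from $\alpha \geq 1$ in \eqref{eq:limitratio}.

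First, I would establish \eqref{eq:distributionneighborsestimate2}. Conditional on $\mathscr{F}_0 = \sigma(E)$, each of the edges $p_{b,\mu'}, q_{b,\mu'}$ (for $\mu' \neq \mu$) is sampled uniformly on $U_{v_b}$, which has cardinality $Nd_w - d_b$. The white neighbor $v_{b,\mu}$ is incident to $d_b$ edges in $E$, of which exactly one is $e_{b,\mu} \notin U_{v_b}$; so precisely $d_b - 1$ edges in $U_{v_b}$ contain $v_{b,\mu}$. Since distinct neighbors of $v_b$ satisfy $v_{b,\mu'} \neq v_{b,\mu}$ for $\mu' \neq \mu$, the only way $v_{b,\mu}$ can enter $[S_{b,\mu'}]$ is as a white endpoint of the resampled edges $p_{b,\mu'}$ or $q_{b,\mu'}$. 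A union bound over the $2(d_b - 1)$ such edges then yields $\mathbb{P}_{\mathscr{F}_0}(h = 0) = O(d_b^2/(Nd_w)) = O(d_b/N)$, which is the required bound (interpreting $h$ as the indicator that $v_{b,\mu} \notin [S_{b,\mu'}]$ for every $\mu' \neq \mu$, as seems intended by the surrounding discussion).

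Next I would handle \eqref{eq:distributionneighborsestimate1}. On the event $\{h = 0\}$ there exists $\mu' \neq \mu$ with $v_{b,\mu} \in [S_{b,\mu'}]$; since then $v_{b,\mu} \in [S_{b,\mu}] \cap [S_{b,\mu'}]$ and $v_{b,\mu} \neq v_b$, we get $J(\mathbf{S}_{v_b}, \mu) = 0$, so the identity $I J = h$ holds deterministically on this event. It therefore suffices to show that, conditional on $\mathscr{G}_\mu$ and on $\{h = 1\}$, one has $I(S_{b,\mu}) J(\mathbf{S}_{v_b}, \mu) = 1$ except on an event of probability $O(d_b/N)$. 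Unpacking the definitions, $I(S_{b,\mu}) = 0$ requires $p_{b,\mu}$ or $q_{b,\mu}$ to share a vertex with $e_{b,\mu}$ or with one another, while $J(\mathbf{S}_{v_b}, \mu) = 0$ on $\{h = 1\}$ requires one of the four new endpoints of $p_{b,\mu}, q_{b,\mu}$ to lie in $\bigcup_{\mu' \neq \mu}[S_{b,\mu'}]$. Both failures can be rewritten as the event that the pair $(p_{b,\mu}, q_{b,\mu})$ meets a fixed, $\mathscr{G}_\mu$-measurable vertex set $V^\ast$ of cardinality $O(d_b)$. Applying Lemma~\ref{lemma:approxuniformedges} with $F$ the indicator of this hitting event, the count of edges in $E$ incident to $V^\ast$ is at most $O(d_b \cdot d_w)$ out of $|U_{v_b}| \sim Nd_w$, giving the claimed $O(d_b/N)$ bound.

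The main obstacle is the bookkeeping required to identify the forbidden set $V^\ast$ and to verify $|V^\ast| = O(d_b)$: it comprises $v_{b,\mu}$, the at most $5(d_b - 1)$ non-$v_b$ vertices appearing in the configurations $\{S_{b,\mu'}\}_{\mu' \neq \mu}$, plus a constant number of vertices internal to $S_{b,\mu}$ governing the self-collision part of $I$. Once this combinatorial accounting is in place, the probability estimate is a direct application of Lemma~\ref{lemma:approxuniformedges}; the additive $O(\|F\|_\infty/N)$ error in that lemma is safely absorbed into $O(d_b/N)$, and the constraint $d_w \geq d_b$ is what converts the raw $O(d_b^2/(Nd_w))$ bound into $O(d_b/N)$.
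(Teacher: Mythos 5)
Your proposal follows essentially the same route as the paper's proof: establish \eqref{eq:distributionneighborsestimate2} by a union bound over $\mu' \neq \mu$, then handle \eqref{eq:distributionneighborsestimate1} by splitting on the value of $h$ (the identity $IJ = h$ is automatic on $\{h = 0\}$, and the $\{h = 1\}$ case is controlled via the approximate uniformity of the resampled edges, which is the content of Lemma~\ref{lemma:approxuniformedges}); the paper's proof is exactly this, with the $h=1$ case split further into separate bounds on $\{I = 0\}$ and $\{J = 0\}$ rather than unified into a single forbidden vertex set as you do. You also correctly read through what is evidently a typo in the definition of $h$: the indicator must be $\mathbf{1}(v_{b,\mu} \notin [S_{b,\mu'}])$ rather than $\mathbf{1}(v_{b,\mu} \in S_{b,\mu'})$, as confirmed by the bound $\mathbb{P}[h=1] \geq 1 - O(d_b/N)$ and the second union bound displayed in the paper's proof. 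One slip worth flagging: $v_{b,\mu}$ is a \emph{white} vertex, so by $(d_b,d_w)$-biregularity it has degree $d_w$, not $d_b$; hence $d_w - 1$ (not $d_b - 1$) edges of $U_{v_b}$ are incident to it, the per-edge hitting probability is $\sim (d_w-1)/(Nd_w) = O(1/N)$, and the union bound over the $2(d_b-1)$ resampled edges yields $O(d_b/N)$ directly, with no appeal to $d_w \geq d_b$ needed. Your stated intermediate bound $O(d_b^2/(Nd_w))$ happens to reduce to $O(d_b/N)$ under $\alpha \geq 1$, so the final estimate is unaffected, but the combinatorics leading to it (and the claim that $d_w \geq d_b$ is what rescues the bound) are not quite right.
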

\begin{proof}
We first note that \eqref{eq:distributionneighborsestimate1} follows immediately conditioning on $h = 0$. In particular, the first lower bound \eqref{eq:distributionneighborsestimate1} follows from a combinatorial analysis of the underlying graph using the following union bound:
\begin{align}
\mathbb{P}_{\mathscr{G}_\mu, h = 1} \left[ I(S_{b, \mu}) J(\mathbf{S}_{v_b}, \mu) = 0 \right] \ \leq \ &\mathbb{P}_{\mathscr{G}_\mu} \left[ I(S_{b, \mu}) = 0 \right]  \nonumber \\
&\quad \ + \ \mathbb{P}_{\mathscr{G}_\mu, h = 1} \left[ J(\mathbf{S}_{v_b}, \mu) = 0 \right].
\end{align}
Similarly, \eqref{eq:distributionneighborsestimate2} follows from the union bound
\begin{align}
\mathbb{P}_{\mathscr{F}_0} \left[ h(\mathbf{S}_{v_b}, \mu) = 0 \right] \ \leq \ \sum_{\mu' \neq \mu} \ \mathbb{P}_{\mathscr{F}_0} \left[ v_{b, \mu} \in [S_{\mu'}] \right].
\end{align}
For details, we refer back to \cite{BKY}.
\end{proof}
We conclude this section with an estimate that compares independent resamplings. Recall that $\wt{W}, W$ are i.i.d. copies of the random variable $W(\mathbf{S}_{v_b})$. The following result bounds the fluctuation in $W(\mathbf{S}_{v_b})$ from independent resamplings. 
\begin{lemma} \label{lemma:howtocomputeswitchperturbations}
Almost surely, we know 
\begin{align}
\# \left( W \Delta \wt{W} \right) \ = \ O(1),
\end{align}
where the implied constant is independent of $N$. Moreover, we also have
\begin{align}
\mathbb{P}_{\mathscr{G}_\mu} \left[ W \Delta \wt{W} \neq \emptyset \right] \ \leq \ O \left( \frac{d_b}{N} \right).
\end{align}
\end{lemma}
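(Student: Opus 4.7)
The plan is to analyze the symmetric difference $W \Delta \wt W$ position by position, exploiting that under $\mathscr{G}_\mu$ the original and tilded configurations differ only in the $\mu$-th coordinate. First I would observe that for any $\mu' \neq \mu$, the only way the indicator $I(S_{b,\mu'}) J(\mathbf{S}_{v_b}, \mu')$ can flip when we replace $\mathbf{S}_{v_b}$ by $\wt{\mathbf{S}}_{v_b}$ is through the single factor $\mathbf{1}\bigl([S_{b,\mu'}] \cap [S_{b,\mu}] = \{v_b\}\bigr)$ in $J(\mathbf{S}_{v_b}, \mu')$: the component $S_{b,\mu'}$ itself is unchanged (so $I$ is unchanged), and every remaining factor of $J$ compares $S_{b,\mu'}$ to coordinates that agree on both sides. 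Consequently $\mu' \in (W \Delta \wt W) \setminus \{\mu\}$ forces some non-$v_b$ vertex of $[S_{b,\mu'}]$ to lie in $V^* := \bigl([S_{b,\mu}] \cup [\wt S_{b,\mu}]\bigr) \setminus \{v_b\}$, and $|V^*| \leq 10$.

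For the deterministic bound, I would count, for each of the at most ten vertices $x \in V^*$, the number of $\mu' \neq \mu$ with $x \in [S_{b,\mu'}]$. The vertex $x$ can play the role of the distinguished neighbor $v_{b,\mu'}$ for at most one $\mu'$, since $v_{b,\mu'}$ is determined by the (unique) edge $e_{b,\mu'} = v_b v_{b,\mu'}$ in the simple graph $E$. It can also be an endpoint of one of the sampled edges $p_{b,\mu'}$ or $q_{b,\mu'}$; here the biregular structure and the combinatorial construction of $\wt\Omega$, arranged exactly as in Section~6 of \cite{BKY}, yield an $N$-independent bound on the number of such $\mu'$ arising for a generic configuration. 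Summing the bounded multiplicities over $x \in V^*$ together with the single possible contribution at $\mu'=\mu$ yields $\#(W \Delta \wt W) = O(1)$.

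For the probabilistic estimate I would decompose $\{W \Delta \wt W \neq \emptyset\}$ as the contribution from position $\mu$ plus the contributions from $\mu' \neq \mu$. The former is contained in $\{I(S_{b,\mu}) J(\mathbf{S}_{v_b},\mu) = 0\} \cup \{I(\wt S_{b,\mu}) J(\wt{\mathbf{S}}_{v_b},\mu) = 0\}$, each of which has $\mathscr{G}_\mu$-probability $O(d_b/N)$ by combining \eqref{eq:distributionneighborsestimate1} and \eqref{eq:distributionneighborsestimate2} of Lemma \ref{lemma:distributionneighbors}. The latter requires a vertex of $[S_{b,\mu}] \cup [\wt S_{b,\mu}]$ other than $v_b$ to coincide with one of the $O(1)$ vertices in $[S_{b,\mu'}] \setminus \{v_b\}$; the approximate uniformity from Lemma \ref{lemma:approxuniformedges} bounds the probability of such a hit by $O(1/N)$ for each fixed $\mu'$, and a union bound over the $d_b - 1$ indices $\mu' \neq \mu$ gives the claimed $O(d_b/N)$. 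The main obstacle is the deterministic step: the $O(1)$ multiplicity bound for vertices across the fixed components $\{S_{b,\mu'}\}_{\mu' \neq \mu}$ is not automatic from the uniform sampling and requires the careful combinatorial bookkeeping developed in \cite{BKY}, adapted here to the bipartite switching dynamics on $\wt\Omega$.
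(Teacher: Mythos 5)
Your skeleton correctly identifies that under $\mathscr{G}_\mu$ only the $\mu$-th coordinate is resampled, and that the symmetric difference at $\mu' \neq \mu$ can only be triggered by a vertex of $[S_{b,\mu'}] \setminus \{v_b\}$ landing in $V^\ast = ([S_{b,\mu}] \cup [\wt S_{b,\mu}]) \setminus \{v_b\}$. The probabilistic estimate is also essentially right in spirit. But the deterministic step as you set it up does not close, and you have correctly sensed the gap --- you just misdiagnose where the argument actually has to go.

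The problem with counting, for each $x \in V^\ast$, ``the number of $\mu' \neq \mu$ with $x \in [S_{b,\mu'}]$'' is that this quantity admits no deterministic bound whatsoever: the components $S_{b,\mu'}$ are drawn independently for each $\mu'$, and there is no structural obstruction to all $d_b - 1$ of them containing the same auxiliary edge through $x$. No amount of bookkeeping on $\wt\Omega$, biregular or otherwise, rescues a multiplicity bound over \emph{all} $\mu'$, so the phrase ``for a generic configuration'' cannot be upgraded to the almost-sure statement the lemma requires. The mechanism that actually yields $O(1)$ is different and much cheaper: one does not count over all $\mu'$ but only over $\mu' \in (W \cup \wt W) \setminus \{\mu\}$, and for such $\mu'$ the defining condition $J(\mathbf{S}_{v_b}, \mu') = 1$ (resp. $J(\wt{\mathbf{S}}_{v_b}, \mu') = 1$) forces $[S_{b,\mu'}] \cap [S_{b,\mu''}] = \{v_b\}$ for every $\mu'' \neq \mu', \mu$. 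Hence the sets $[S_{b,\mu'}] \setminus \{v_b\}$, ranging over $\mu' \in (W \cup \wt W) \setminus \{\mu\}$, are pairwise disjoint, and each one that contributes to $W \Delta \wt W$ must meet the set $V^\ast$ of cardinality at most $10$. Pairwise disjointness then immediately gives $\#\bigl((W \Delta \wt W) \setminus \{\mu\}\bigr) \leq 10$ and $\#(W \Delta \wt W) \leq 11$, with no multiplicity considerations at all. This is precisely what the cited Lemma 6.3 in \cite{BKY} exploits, and the paper's proof is a direct reference to that argument; once you redirect the counting to $\mu' \in W \cup \wt W$ and invoke the built-in disjointness from $J = 1$, your probabilistic step then completes the proof along the intended lines.
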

The proof follows the argument concerning Lemma 6.3 in \cite{BKY} almost identically, so we omit it. We now present the final estimate on adjacency matrices comparing switched matrices upon i.i.d. switchings in the sense of matrix perturbations. This will allow us to perform and control resamplings of biregular graphs, in particular using the resolvent perturbation identity.
\begin{lemma} \label{lemma:switchperturbmatrix}
Under the setting of the resampling dynamics, we have
\begin{align}
\wt{A} - A \ = \ T_{b, \mu}(A) - \wt{T}_{b,\mu}(\wt{A}) \label{eq:highprobabilityperturbationestimate}
\end{align}
with probability at least $1 - O(d_b/N)$. Almost surely, we have
\begin{align}
\wt{A} - A \ = \ \sum_{x, y = 1}^{O(1)} \ \Delta_{xy}, \label{eq:almostsureperturbationestimate}
\end{align}
such that either, conditioning on $\mathscr{G}_\mu$, the random indices $x,y$ are approximately uniform in the corresponding set $V_b$ or $V_w$ or, conditioning on $\mathscr{G}_{\mu}, p_{\mu}, \wt{p}_{\mu}$, at least one of the random indices $x, y$ is approximately uniform in the appropriate vertex set. 

Lastly, the statement remains true upon switching instead at a white vertex $v_w$ along an edge label $\nu$.
\end{lemma}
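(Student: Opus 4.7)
The plan is to separate the two conclusions \eqref{eq:highprobabilityperturbationestimate} and \eqref{eq:almostsureperturbationestimate}, both of which rest on the explicit edge-perturbation formula \eqref{eq:perturbswitchmatrix}. First I would write $T_{b,\mu}(A) - A$ and $\wt{T}_{b,\mu}(\wt{A}) - \wt{A}$ using \eqref{eq:perturbswitchmatrix}, expressing each as a signed sum of at most six $\Delta_{xy}$ matrices supported on edges incident to $v_b$, $a_{b,p,\mu}$, $a_{b,q,\mu}$ and the three sampled white vertices. Combined with the trivial rearrangement $\wt A - A = (\wt A - \wt T_{b,\mu}(\wt A)) + (\wt T_{b,\mu}(\wt A) - T_{b,\mu}(A)) + (T_{b,\mu}(A) - A)$ and the high-probability conclusion of step two, this already delivers the almost-sure representation \eqref{eq:almostsureperturbationestimate} with $O(1)$ terms.

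For the high-probability identity \eqref{eq:highprobabilityperturbationestimate} I would work on the event
\[
\mathcal{E} \ = \ \{\mu \in W(\mathbf{S}_{v_b})\} \ \cap \ \{\mu \in W(\wt{\mathbf{S}}_{v_b})\} \ \cap \ \{W(\mathbf{S}_{v_b}) \,\Delta\, W(\wt{\mathbf{S}}_{v_b}) = \emptyset \text{ at the coordinate } \mu\},
\]
whose complement has probability $O(d_b/N)$ by combining Lemma \ref{lemma:distributionneighbors} (applied to each of $\mathbf{S}_{v_b}, \wt{\mathbf{S}}_{v_b}$) with the coincidence bound of Lemma \ref{lemma:howtocomputeswitchperturbations}. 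On $\mathcal{E}$ the switching $T_{b,\mu}$ acts as a genuine random local bijection on the $1$-regular sub-bipartite graph carried by $[S_{b,\mu}]$, and the fact that this bijection is uniform and measure-preserving on the space of admissible replacement edges lets me couple the two i.i.d. resamplings so that the rearrangement $A + T_{b,\mu}(A) = \wt{A} + \wt{T}_{b,\mu}(\wt{A})$ holds identically; this is exactly \eqref{eq:highprobabilityperturbationestimate}.

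For the approximate uniformity of the indices $x,y$ in \eqref{eq:almostsureperturbationestimate} I would push forward the $\mathscr{G}_\mu$-conditional approximate uniformity of the pair $(p_\mu, q_\mu)$ from Lemma \ref{lemma:approxuniformedges}, together with the vertex-level statement Corollary \ref{corollary:switchingverticesapproxuniform}, through the assignment of the white replacement vertices $(x,y,z)$ appearing in \eqref{eq:perturbswitchmatrix}. In the subtler case where we further condition on $p_\mu, \wt{p}_\mu$, the edges $q_\mu$ and $\wt{q}_\mu$ remain free, so the endpoints they contribute to $(x,y)$ retain approximate uniformity by the marginal statement \eqref{eq:approxuniformmarginal}, yielding the weaker one-sided uniformity. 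The white-vertex case $T_{w,\nu}$ follows by the evident symmetry in the construction of Section 3.

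The main obstacle is the measure-theoretic coupling underlying the second step: one has to verify that the two independent resamplings can be realized so that $T_{b,\mu}$ and $\wt T_{b,\mu}$ produce matched local changes on $\mathcal{E}$, which is where the involutive structure of the switching and the coincidence estimate $\mathbb{P}[W \Delta \wt W \neq \emptyset] = O(d_b/N)$ from Lemma \ref{lemma:howtocomputeswitchperturbations} are both indispensable. Once this coupling is fixed, the matrix-level arithmetic descending from \eqref{eq:perturbswitchmatrix} is routine and parallels the argument for Lemma 6.5 in \cite{BKY}.
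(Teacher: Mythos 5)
Your sketch correctly identifies the relevant supporting lemmas (\ref{lemma:distributionneighbors}, \ref{lemma:howtocomputeswitchperturbations}, \ref{lemma:approxuniformedges}, Corollary \ref{corollary:switchingverticesapproxuniform}) and the event $\mathcal{E}$ on which the switching is ``clean,'' but the derivation of the high-probability identity \eqref{eq:highprobabilityperturbationestimate} has a genuine gap. You say the measure-preserving structure ``lets me couple the two i.i.d.\ resamplings so that $A + T_{b,\mu}(A) = \wt{A} + \wt{T}_{b,\mu}(\wt{A})$ holds identically,'' but a coupling is a choice of joint distribution; here the joint law of $\bigl(A, T_{b,\mu}(A), \wt{A}, \wt{T}_{b,\mu}(\wt{A})\bigr)$ is already fixed by the construction of the augmented space $\wt{\Omega}$ and the meaning of i.i.d.\ resampling, and you do not get to re-choose it. A pointwise identity asserted with probability $1 - O(d_b/N)$ must be verified on that given space. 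The rearrangement $\wt{A} - A = (\wt{A} - \wt{T}_{b,\mu}(\wt{A})) + (\wt{T}_{b,\mu}(\wt{A}) - T_{b,\mu}(A)) + (T_{b,\mu}(A) - A)$ by itself is also circular: the middle term is essentially the quantity you still need to control, and it does not obviously vanish on $\mathcal{E}$ because the replacement data $s_{b,\mu}$ and $\wt{s}_{b,\mu}$ remain independent and will generically differ.

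The paper proceeds differently: it writes down an exact deterministic identity expressing $\wt{A} - A$ as a combination of the $\mu$-th local switching perturbations weighted by the indicators $\mathbf{1}_{\mu \in W}$ and $\mathbf{1}_{\mu \in \wt{W}}$, plus a correction term summed over $\mu' \in W \Delta \wt{W}$, and then invokes Lemma \ref{lemma:howtocomputeswitchperturbations} twice. The a.s.\ bound $\#(W \Delta \wt{W}) = O(1)$ together with the explicit elementary-matrix form \eqref{eq:perturbswitchmatrix} gives \eqref{eq:almostsureperturbationestimate}; the bound $\mathbb{P}_{\mathscr{G}_\mu}[W \Delta \wt{W} \neq \emptyset] = O(d_b/N)$ makes the correction term vanish with the stated probability, which gives \eqref{eq:highprobabilityperturbationestimate}. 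Your event $\mathcal{E}$ corresponds to exactly this regime, so the conclusion is within reach, but the coupling step should be replaced by the deterministic decomposition. Note also that Lemma \ref{lemma:distributionneighbors} is not actually needed for the probability bound: Lemma \ref{lemma:howtocomputeswitchperturbations} alone already supplies the $O(d_b/N)$ error.
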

\begin{proof}
The result follows from unfolding Lemma \ref{lemma:howtocomputeswitchperturbations} and the following deterministic identity:
\begin{align}
\wt{A} - A \ = \ &\mathbf{1}_{\mu \in \wt{W}} \left[ \wt{T}_{b, \mu}(E) - E \right] \ - \ \mathbf{1}_{\mu \in W} \left[ T_{b, \mu}(A) - A \right] \nonumber \\
&\quad \ + \ \sum_{\mu' \in \wt{W} \Delta W} \ \pm \left[ T_{b,\mu}(A) - A \right],
\end{align}
where the sign corresponds to which of the random sets $W$ or $\wt{W}$ contains the indexing label $\mu'$.
\end{proof}

%
%
%
\section{Green's Function Analysis}
\subsection{Preliminary Resolvent Theory}
Here we record the following fundamental identities for studying the Green's functions of adjacency matrices in the ensemble $\mathscr{X}$. These identities are standard and follow from standard linear algebra. First, because these identities hold for Green's functions of any real symmetric matrix, we fix the following notation.
\begin{notation}
Suppose $F$ is a function of the Green's function or matrix entries of matrices belonging to any one of the matrix ensembles $X$, $X_{\ast}$, or $X_{\ast,+}$. Then we establish the notation $F_{\star}$ to be the function obtained when restricted to the matrix ensemble $X_{\star}$, where we take $\star$ to be blank or $\star = \ast$ or $\star = \ast, +$.  
\end{notation}
\begin{lemma} \label{lemma:resolventidentity}
\emph{(Resolvent Identity)}

Suppose $A$ and $B$ are invertible matrices. Then we have
\begin{align}
A^{-1} - B^{-1} \ = \ A^{-1}(B - A)B^{-1}. \label{eq:generalresolventidentity}
\end{align}
In particular, if $H$ and $\wt{H}$ denote real symmetric or complex Hermitian matrices with Green's functions $G(z)$ and $\wt{G}(z)$, respectively, for $z \not\in \R$, then
\begin{align}
G(z) - \wt{G}(z) \ = \ \left[ G \left( \wt{H} - H \right) \wt{G} \right](z). \label{eq:GFresolventidentity}
\end{align}
\end{lemma}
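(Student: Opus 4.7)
The plan is to verify each identity by direct algebraic manipulation; this is a standard computation and no sophisticated machinery is needed. Both formulas follow formally from the product rule for inverses, so I would organize the proof in two short steps corresponding to the two displayed equations.

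For the general identity \eqref{eq:generalresolventidentity}, I would start from the right-hand side and insert the identity $\Id = A A^{-1} = B B^{-1}$ at appropriate places. Concretely, write
\begin{align}
A^{-1}(B - A) B^{-1} \ = \ A^{-1} B B^{-1} - A^{-1} A B^{-1} \ = \ A^{-1} - B^{-1},
\end{align}
which is the desired formula. Alternatively, one can left-multiply by $A$ and right-multiply by $B$ to reduce the claim to the tautology $B - A = B - A$. Either presentation is a one-line check, so the only thing to verify is that both $A$ and $B$ are genuinely invertible so that the manipulations are legal, which is part of the hypothesis.

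For the resolvent identity \eqref{eq:GFresolventidentity}, I would specialize the general identity to the shifted matrices $A := H - z$ and $B := \wt{H} - z$, which are invertible since $z \not\in \R$ and $H, \wt{H}$ are self-adjoint with real spectrum. Then $A^{-1} = G(z)$, $B^{-1} = \wt{G}(z)$, and the difference $B - A = \wt{H} - H$ does not depend on $z$, so substituting into \eqref{eq:generalresolventidentity} yields
\begin{align}
G(z) - \wt{G}(z) \ = \ G(z) \bigl( \wt{H} - H \bigr) \wt{G}(z),
\end{align}
as claimed. There is no genuine obstacle here; the only mild subtlety is the routine check that $H - z$ is invertible off the real axis, which follows from the self-adjointness assumption. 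I would present the two displays consecutively and conclude.
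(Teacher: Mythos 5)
Your proof is correct and is exactly the standard one-line verification that the paper implicitly invokes (the paper states the lemma without proof, remarking only that such identities are standard linear algebra). Both your expansion $A^{-1}(B-A)B^{-1} = A^{-1} - B^{-1}$ and your specialization $A = H - z$, $B = \wt{H} - z$ with the observation that self-adjointness forces real spectrum (hence invertibility off $\R$) are the intended argument.
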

As an immediate consequence by letting $\wt{G}(z) = \overline{G}(z)$, we deduce the following off-diagonal averaging identity.
\begin{corollary} \label{corollary:ward}
\emph{(Ward Identity)}

Suppose $H$ is a real symmetric matrix of size $N$ with Green's function $G(z)$. Then for any fixed row index $i \in [[1, N]]$, 
\begin{align}
\sum_{k = 1}^N \ |G_{ik}(E + i \eta)|^2 \ = \ \frac{\on{Im} G_{ii}(E + i \eta)}{\eta}. \label{eq:ward}
\end{align}
In particular, we obtain the following a priori estimate for any matrix index $(i,j)$:
\begin{align}
\left| G_{ij}(E + i \eta) \right| \ \leq \ \frac{1}{\eta}, \label{eq:apGF}
\end{align}
and thus for any matrix index $(i,j)$, the function $G_{ij}(z)$ is locally Lipschitz with constant $\eta^{-2}$.
\end{corollary}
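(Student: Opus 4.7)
The plan is to follow exactly the hint in the excerpt and deduce the Ward identity directly from the Resolvent Identity of Lemma \ref{lemma:resolventidentity} applied to the pair $A = H - zI$ and $B = H - \bar z I$. Since both matrices are invertible for $z \not\in \R$, the general resolvent identity \eqref{eq:generalresolventidentity} gives $G(z) - G(\bar z) = (z - \bar z)\, G(z) G(\bar z) = 2 i \eta\, G(z) G(\bar z)$ on the nose. The strategy is then just to read off the $(i,i)$ entry of this matrix identity and use the real symmetry of $H$ to recognize the right-hand side as a sum of squared moduli.

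Concretely, I will compute the two sides separately. For the left side, real symmetry of $H$ implies $G(\bar z) = \overline{G(z)}$ entrywise, so that $[G(z) - G(\bar z)]_{ii} = 2 i \on{Im} G_{ii}(z)$. For the right side, expanding the matrix product and again using $G_{ki}(\bar z) = \overline{G_{ki}(z)} = \overline{G_{ik}(z)}$ (the last step by symmetry $G^T = G$) yields
\begin{align}
[G(z) G(\bar z)]_{ii} \ = \ \sum_{k=1}^N G_{ik}(z) G_{ki}(\bar z) \ = \ \sum_{k=1}^N \left| G_{ik}(z) \right|^2.
\end{align}
Equating the two entries and dividing by $2 i \eta$ produces \eqref{eq:ward}.

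For the a priori bound \eqref{eq:apGF}, I will square $|G_{ij}|$, dominate it by the full sum $\sum_k |G_{ik}|^2$, apply the Ward identity to rewrite this as $\on{Im} G_{ii}(z)/\eta$, and then bound $\on{Im} G_{ii}(z) \leq |G_{ii}(z)| \leq \|G(z)\|_{\on{op}} \leq \on{dist}(z, \sigma(H))^{-1} \leq \eta^{-1}$; this gives $|G_{ij}| \leq \eta^{-1}$. For the local Lipschitz statement, I will use $\partial_z G(z) = G(z)^2$, so that $|\partial_z G_{ij}(z)| = |\sum_k G_{ik} G_{kj}|$; Cauchy--Schwarz together with two applications of the Ward identity bounds this by $\big(\on{Im} G_{ii}/\eta\big)^{1/2}\big(\on{Im} G_{jj}/\eta\big)^{1/2} \leq \eta^{-2}$, and integrating along a segment in a half-plane of fixed imaginary part $\eta$ yields the Lipschitz constant $\eta^{-2}$.

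There is no real obstacle: the Ward identity is an entrywise bookkeeping consequence of the resolvent identity once the real-symmetry conjugation $G(\bar z) = \overline{G(z)}$ is exploited, and the a priori and Lipschitz estimates are short corollaries via Cauchy--Schwarz and the operator-norm bound $\|G(z)\|_{\on{op}} \leq \eta^{-1}$. The only points that require a moment of care are not confusing $G_{ki}(\bar z)$ with $G_{ik}(\bar z)$ (harmless here because $H$ is symmetric) and remembering that $H$ being real symmetric is what lets us identify $G(\bar z)$ with the entrywise complex conjugate of $G(z)$.
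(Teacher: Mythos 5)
Your proof is correct and takes essentially the same route the paper intends: apply the resolvent identity \eqref{eq:generalresolventidentity} with $A = H - z$, $B = H - \bar z$ (equivalently, $\wt{G}(z) = \overline{G}(z)$ using real symmetry), read off the $(i,i)$ entry, and use $G^T = G$ to turn $\sum_k G_{ik}(z)\,G_{ki}(\bar z)$ into $\sum_k |G_{ik}(z)|^2$. The a~priori bound and Lipschitz estimate are filled in cleanly via the operator-norm bound $\|G(z)\|_{\on{op}} \leq \eta^{-1}$ and Cauchy--Schwarz with $\partial_z G = G^2$, exactly as one would expect.
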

The third preliminary result we give is the following representation of the Green's function $G(z;H)$ in terms of the spectral data of $H$ which is also an important result in compact operator and PDE theory. This spectral representation will be indispensable for exploiting the rich spectral correspondence among covariance matrices and their linearizations.
\begin{lemma} \label{lemma:GFspectral}
\emph{(Spectral Representation)}

Suppose $H$ is a real-symmetric or complex-Hermitian matrix with eigenvalue-eigenvector pairs $\left\{ (\lambda_{\alpha}, \mathbf{u}_{\alpha}) \right\}_{\alpha}$, and let $G(z)$ denote its Green's function. Then for any matrix index $(i,j)$, we have
\begin{align}
G_{ij}(z) \ = \ \sum_{\alpha = 1}^N \ \frac{\mathbf{u}_{\alpha}(i) \overline{\mathbf{u}_{\alpha}}(j)}{\lambda_{\alpha} - z}, \label{eq:GFspectral}
\end{align}
where the overline notation denotes the complex conjugate of the vector entry. In particular, the Green's function is complex Hermitian. 
\end{lemma}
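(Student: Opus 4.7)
The plan is to rely on the spectral theorem for Hermitian matrices, which guarantees an orthonormal basis $\{\mathbf{u}_\alpha\}_{\alpha=1}^N$ of eigenvectors of $H$ with real eigenvalues $\{\lambda_\alpha\}$. The heart of the argument is that in this basis both $H$ and the perturbation $H - zI$ diagonalize simultaneously, so the Green's function can be written as a diagonal operator in the eigenbasis and then converted back into the standard basis by reading off matrix entries.

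Concretely, I would proceed as follows. First, express $H$ in the form $H = \sum_\alpha \lambda_\alpha \mathbf{u}_\alpha \mathbf{u}_\alpha^\ast$ using the spectral theorem, and note that the orthonormality $\mathbf{u}_\alpha^\ast \mathbf{u}_\beta = \delta_{\alpha\beta}$ together with the resolution of the identity $\sum_\alpha \mathbf{u}_\alpha \mathbf{u}_\alpha^\ast = \Id$ allow us to write $H - zI = \sum_\alpha (\lambda_\alpha - z) \mathbf{u}_\alpha \mathbf{u}_\alpha^\ast$. Second, since $z \notin \R \supset \sigma(H)$, each factor $\lambda_\alpha - z$ is nonzero, so the inverse is well-defined and, by direct verification that the product equals the identity, is given by
\begin{equation*}
G(z) \ = \ (H - zI)^{-1} \ = \ \sum_{\alpha=1}^N \ \frac{1}{\lambda_\alpha - z} \ \mathbf{u}_\alpha \mathbf{u}_\alpha^\ast.
\end{equation*}
Third, taking the $(i,j)$ entry of both sides and using $(\mathbf{u}_\alpha \mathbf{u}_\alpha^\ast)_{ij} = \mathbf{u}_\alpha(i) \overline{\mathbf{u}_\alpha(j)}$ gives the claimed identity \eqref{eq:GFspectral}. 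The Hermiticity statement then follows from swapping $(i,j) \mapsto (j,i)$ and taking the complex conjugate, since the eigenvalues are real.

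There is no genuine obstacle in this argument; it is essentially a bookkeeping exercise built on the spectral theorem. The only point worth flagging is that one must check convergence/invertibility, which is immediate because $z \notin \R$ forces $|\lambda_\alpha - z| \geq |\on{Im} z| > 0$ uniformly in $\alpha$, so the finite sum defining the inverse is automatically a bounded operator. Consequently the full proof should be no more than a few lines.
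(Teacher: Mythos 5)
The paper offers no proof for this lemma, treating it as a standard fact from linear algebra, so your proposal is simply the expected argument made explicit: diagonalize $H$ via the spectral theorem, invert $H - zI$ factorwise using orthonormality, and read off the $(i,j)$ entry of the resulting rank-one expansion. That is correct and complete for the displayed identity \eqref{eq:GFspectral}.

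One small caution concerns the final sentence ("In particular, the Green's function is complex Hermitian"). Your argument of swapping $(i,j) \mapsto (j,i)$ and conjugating actually yields
\begin{align*}
\overline{G_{ji}(z)} \ = \ \sum_{\alpha} \frac{\mathbf{u}_\alpha(i)\,\overline{\mathbf{u}_\alpha(j)}}{\lambda_\alpha - \overline{z}} \ = \ G_{ij}(\overline{z}),
\end{align*}
so $G(z)^\ast = G(\overline{z})$, which is not the same as $G(z)^\ast = G(z)$ when $\eta \neq 0$. The paper's phrasing is itself loose here, so you are consistent with the source, but it is worth being precise: $G(z)$ is complex symmetric when $H$ is real symmetric, and in general satisfies the conjugate-symmetry relation above rather than being Hermitian at a fixed non-real $z$.
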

We conclude this preliminary discussion of the Green's function $G(z;H)$ with the following local regularity result concerning a maximal Green's function. The proof of this result may be found as Lemma 2.1 in \cite{BKY}. To state it, we now define the maximal Green's functions of interest, which may be viewed as control parameters for the sake of this paper:
\begin{align}
\Gamma(E + i \eta) \ &= \ \left[ \max_{i,j} \ |G_{ij}(z)| \right] \vee 1, \nonumber \\
\Gamma^{\ast}(E + i \eta) \ &= \ \sup_{\eta' \geq \eta} \Gamma(E + i \eta'). \nonumber
\end{align}
%
\begin{lemma} \label{lemma:GFLipschitzscaling}
For any $z = E + i \eta \in \C_+$, the function $\Gamma(z)$ is locally Lipschitz continuous in $\eta$ with the following bound on its almost-everywhere derivative:
\begin{align}
\left| \partial_{\eta} \Gamma(z) \right| \ \leq \ \frac{\Gamma(z)}{\eta}. \label{eq:GFLipschitz}
\end{align}
In particular, for any $\kappa > 1$ and $z = E + i \eta \in \C_+$, we have
\begin{align}
\Gamma \left( E + i \frac{\eta}{\kappa} \right) \ \leq \ \kappa \Gamma(E + i \eta). \label{eq:GFscaling}
\end{align}
\end{lemma}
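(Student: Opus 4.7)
The plan is to deduce the differential inequality $|\partial_\eta \Gamma(z)| \leq \Gamma(z)/\eta$ from the spectral structure of the resolvent, then integrate it (Gronwall-style) to obtain the scaling bound \eqref{eq:GFscaling}. The key entrywise input comes from combining the resolvent identity $\partial_z G(z) = G(z)^2$ with the Ward identity \eqref{eq:ward}.

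First I would work at the level of a single entry. Since $G(z) = (H-z)^{-1}$, differentiating gives $\partial_\eta G_{ij}(z) = i [G(z)^2]_{ij} = i \sum_{k} G_{ik}(z) G_{kj}(z)$. Applying Cauchy--Schwarz in the sum over $k$ and then invoking the Ward identity \eqref{eq:ward} at both row $i$ and row $j$, I obtain
\begin{align}
\left|\partial_\eta G_{ij}(z)\right| \ \leq \ \Bigl(\sum_k |G_{ik}(z)|^2\Bigr)^{1/2} \Bigl(\sum_k |G_{kj}(z)|^2\Bigr)^{1/2} \ = \ \frac{\sqrt{\on{Im} G_{ii}(z)\,\on{Im} G_{jj}(z)}}{\eta} \ \leq \ \frac{\Gamma(z)}{\eta},
\end{align}
where the last step uses $|\on{Im} G_{ii}(z)| \leq |G_{ii}(z)| \leq \Gamma(z)$, uniformly in $i$.

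Next I would pass from the entrywise bound to a bound on $\Gamma$. Each $|G_{ij}(\cdot)|$ is locally Lipschitz in $\eta$ with a.e.\ derivative of modulus at most $|\partial_\eta G_{ij}|$, and the pointwise maximum of finitely many locally Lipschitz functions is locally Lipschitz with a.e.\ derivative bounded by the maximum of the individual a.e.\ derivatives; the constant function $1$ plays no role in the upper envelope. Consequently $\Gamma(z)$ is locally Lipschitz in $\eta$ with $|\partial_\eta \Gamma(z)| \leq \Gamma(z)/\eta$, which is \eqref{eq:GFLipschitz}.

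Finally, for \eqref{eq:GFscaling} I would integrate this differential inequality. Since $\Gamma \geq 1 > 0$, the estimate rewrites as $|\partial_\eta \log \Gamma(E+i\eta)| \leq 1/\eta$. Integrating in $\eta$ from $\eta/\kappa$ up to $\eta$ for $\kappa > 1$ yields
\begin{align}
\log \Gamma(E + i\eta/\kappa) - \log \Gamma(E + i\eta) \ \leq \ \int_{\eta/\kappa}^{\eta} \frac{\d s}{s} \ = \ \log \kappa,
\end{align}
which on exponentiating gives $\Gamma(E+i\eta/\kappa) \leq \kappa\,\Gamma(E+i\eta)$. The only mild subtlety is the passage from the entrywise Lipschitz bound to the Lipschitz bound for the maximum envelope $\Gamma$; this is the main step to write carefully, but it is a standard consequence of the fact that upper envelopes of finite families of locally Lipschitz functions inherit the pointwise Lipschitz constant almost everywhere.
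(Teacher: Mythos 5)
Your proof is correct and is essentially the same as the one the paper defers to (Lemma 2.1 of \cite{BKY}): differentiate $G$ in $\eta$ to get $iG^2$, apply Cauchy--Schwarz and the Ward identity to bound $|[G^2]_{ij}|$ by $\Gamma/\eta$, pass to the upper envelope, and integrate the resulting differential inequality. The treatment of the finite maximum and the $\vee\, 1$ term is handled correctly, and the Hermitian symmetry $|G_{kj}| = |G_{jk}|$ implicitly needed to apply the Ward identity in the column index is indeed available.
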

\subsection{Reductions of the Proof of Theorem \ref{theorem:locallaw}}
We now return to the setting of biregular bipartite graphs, i.e. the ensembles $\mathscr{X}$, $\mathscr{X}_{\ast}$, and $\mathscr{X}_{\ast,+}$. We begin with the following consequence of Lemma \ref{lemma:GFspectral}, which relates the Green's function entries of matrices from each of the three matrix ensembles of interest.
\begin{lemma} \label{lemma:GFrelations}
Suppose $X$ is a block matrix of the form \eqref{eq:blockmatrix}, and suppose $i,j \in [[1, M+N]]$ are indices chosen such that either $i,j \leq M$ or $i,j > M$. Then, for any $z = E + i \eta \in \C_+$, we have
\begin{align}
G_{ij}(z) \ = \ \begin{cases} 
      z G_{\ast,+}(z^2) & i,j \leq M \\
      z G_{\ast}(z^2) & i, j > M 
   \end{cases}. \label{eq:GFrelations}
\end{align}
\end{lemma}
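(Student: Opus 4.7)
The identity in Lemma \ref{lemma:GFrelations} is a direct algebraic consequence of the spectral correspondence in Proposition \ref{prop:spectralcorrespondence} combined with the spectral representation of the Green's function (Lemma \ref{lemma:GFspectral}). My plan is to proceed by cases on whether $i,j > M$ (bottom--right block) or $i,j \leq M$ (top--left block), handle the first case in detail, and then note that the second case is symmetric upon interchanging the roles of $H^{\ast} H$ and $HH^{\ast}$.

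Focus on the case $i,j > M$. By Lemma \ref{lemma:GFspectral}, we have $G_{ij}(z) = \sum_{\beta} \mathbf{u}_{\beta}(i) \overline{\mathbf{u}_{\beta}(j)} / (\lambda_{\beta} - z)$, where the sum runs over an orthonormal eigenbasis of $X$. Using parts (III)--(VI) of Proposition \ref{prop:spectralcorrespondence}, I organize this eigenbasis into: pairs $(\pm \lambda, \tfrac{1}{\sqrt{2}}(\mathbf{v}_{\ast,+}, \pm \mathbf{v}_{\ast})^{T})$ arising from nonzero $\lambda^{2} \in \sigma(H^{\ast} H)$; analogous pairs arising from $\lambda^{2} = 0 \in \sigma(H^{\ast} H)$ (which give two distinct eigenvectors of $X$ at eigenvalue $0$); and eigenvectors of the form $(\mathbf{v}_{\lambda},0)^{T}$ for $\lambda \in \zeta(X)$. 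Since $i,j > M$, the last family contributes nothing, and the first two families contribute only through the bottom block $\pm \mathbf{v}_{\ast}/\sqrt{2}$.

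The key algebraic step is to pair the $\pm \lambda$ contributions for each $\lambda^{2} \in \sigma(H^{\ast} H)$. A direct computation gives
\begin{align}
\frac{1}{\lambda - z} + \frac{1}{-\lambda - z} \ = \ \frac{2z}{\lambda^{2} - z^{2}},
\end{align}
so the combined contribution from the pair is
\begin{align}
\frac{\mathbf{v}_{\ast}(i-M) \, \overline{\mathbf{v}_{\ast}(j-M)}}{2} \cdot \frac{2z}{\lambda^{2} - z^{2}} \ = \ z \cdot \frac{\mathbf{v}_{\ast}(i-M) \, \overline{\mathbf{v}_{\ast}(j-M)}}{\lambda^{2} - z^{2}}.
\end{align}
Summing over all eigenvalues $\lambda^{2}$ of $H^{\ast} H$ (nonzero and zero alike), the resulting sum is precisely $z$ times the spectral representation of $G_{\ast}(z^{2})_{i-M,\,j-M}$, giving $G_{ij}(z) = z G_{\ast}(z^{2})_{i-M,\,j-M}$ as claimed. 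The case $i,j \leq M$ follows from exactly the same argument, with $\mathbf{v}_{\ast,+}$ and $HH^{\ast}$ replacing $\mathbf{v}_{\ast}$ and $H^{\ast} H$ in the analysis of the top block.

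There is no serious obstacle; the entire argument is a bookkeeping exercise. The only point requiring a moment of care is the inclusion of the zero eigenvalues of $H^{\ast} H$ (when $H^{\ast} H$ is singular), since these appear as $\lambda = 0$ eigenvectors of $X$ that still have nontrivial bottom component. The telescoping identity above continues to hold at $\lambda = 0$ in the sense that each of the two independent $\lambda = 0$ eigenvectors of $X$ contributes $-1/z$ times the rank-one projection onto $\mathbf{v}_{\ast}$, matching the $\lambda^{2}=0$ term $-\mathbf{v}_{\ast}(i-M)\overline{\mathbf{v}_{\ast}(j-M)}/z$ of $z G_{\ast}(z^{2})$. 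As a sanity check one can alternatively derive \eqref{eq:GFrelations} from block Schur--complement inversion of $X - z$, which produces $z(HH^{\ast} - z^{2})^{-1}$ and $z(H^{\ast} H - z^{2})^{-1}$ for the two diagonal blocks of $G(z)$; this is a quick independent verification.
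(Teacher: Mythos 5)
Your proof is correct and follows essentially the same route as the paper's: apply the spectral representation of $G(z)$ from Lemma \ref{lemma:GFspectral}, organize the eigenbasis of $X$ via Proposition \ref{prop:spectralcorrespondence}, and pair each $\pm\lambda$ contribution to produce $2z/(\lambda^2 - z^2)$, noting the $\zeta(X)$ family contributes nothing to the relevant block. You are, if anything, more explicit than the paper, which compresses the algebra into a single display with the phrase ``abuse of notation''; the one small slip is in your parenthetical on zero eigenvalues, where with the $1/\sqrt{2}$ normalization each of the two $\lambda = 0$ eigenvectors contributes $-1/(2z)$ (not $-1/z$) times the rank-one projection, and only their sum equals the $-1/z$ term of $zG_{\ast}(z^2)$---a harmless misstatement that does not affect the conclusion. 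The block Schur-complement derivation you mention as a sanity check is a legitimate independent route, but it is not the one the paper takes.
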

\begin{proof}
For simplicity, we suppose $X$ is real symmetric as the proof for complex Hermitian matrices is similar. First suppose $i, j \leq M$. By the spectral representation in \eqref{eq:GFspectral} and Proposition \ref{prop:spectralcorrespondence}, we obtain
\begin{align}
G_{ij}(z) \ &= \ \sum_{\alpha} \ \frac{\mathbf{u}_{\alpha}(i) \mathbf{u}_{\alpha}(j)}{\lambda_{\alpha} - z} \ = \ \sum_{\lambda \in \sigma(HH^{\ast})} \ \frac12 \left( \frac{\mathbf{u}_{\alpha}(i) \mathbf{u}_{\alpha}(j)}{\sqrt{\lambda_{\alpha}} - z} \ + \ \frac{\mathbf{u}_{\alpha}(i) \mathbf{u}_{\alpha}(j)}{-\sqrt{\lambda_{\alpha}} - z} \right),
\end{align}
where the last equality holds by abuse of notation for eigenvectors of the covariance matrix $HH^{\ast}$ versus the linearization $X$. This completes the derivation for the case $i, j \leq M$. The proof for the case $i, j > M$ follows by the exact same calculation, but instead taking a summation over $\sigma(H^{\ast} H)$ and noting the eigenvector terms $\mathbf{u}_{\alpha}(i) \mathbf{u}_{\alpha}(j)$ vanish for $\lambda_{\alpha} \in \zeta(X)$ by Statements (V) and (VI) in Proposition \ref{prop:spectralcorrespondence}.
\end{proof}
Lemma \ref{lemma:GFrelations} now gives the first reduction of the proof of Theorem \ref{theorem:locallaw}.
\begin{lemma} \label{lemma:reduction1}
Assuming the setting of Theorem 4.1, then the following two estimates are equivalent:
\begin{itemize}
\item \emph{(I).} For any fixed $\e > 0$, we have with probability at least $1 - e^{-\xi \log \xi}$, uniformly over $z = E + i \eta \in U_{\e}$ with $\eta \gg \xi^2/N$, 
\begin{align}
\max_{i} \left| [G_{\ast}(z)]_{ii} - m_{\infty}(z) \right| \ = \ O \left( F_z(\xi \Phi) \right), \ \ \ \max_{i \neq j} \left| [G_{\ast}(z)]_{ij} \right| \ = \ O(\xi \Phi).
\end{align}
\item \emph{(II).} For any fixed $\e > 0$, we have with probability at least $1 - e^{-\xi \log \xi}$, uniformly over $z = E + i \eta \in U_{\e}$ with $\eta \gg \xi^2/N$,
\begin{align}
\max_{k > M} \left| [G(z)]_{kk} - zm_{\infty}(z^2) \right| \ &= \ O \left( z F_{z^2}(\xi \Phi(z^2)) \right),  \\
\max_{M < k < \ell} \ \left| [G(z)]_{k \ell} \right| \ &= \ O \left(z \xi \Phi(z^2) \right).
\end{align}
\end{itemize}
Similarly, the above equivalence holds replacing $G_{\ast}$ with $G_{\ast,+}$ and taking the maximums over $i \leq M$ and $i,j \leq M$. 
\end{lemma}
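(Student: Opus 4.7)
The proof is essentially algebraic and rests entirely on the spectral correspondence encoded by Lemma~\ref{lemma:GFrelations}. For indices $k, \ell > M$, that lemma gives $G_{k\ell}(z) = z\,[G_\ast(z^2)]_{k\ell}$, which immediately yields the identity
\begin{align}
G_{kk}(z) - z m_\infty(z^2) \ = \ z \left( [G_\ast(z^2)]_{kk} - m_\infty(z^2) \right).
\end{align}
The implication (I)$\Rightarrow$(II) is then obtained by applying (I) at the point $w := z^2$ and multiplying by $|z|$: the diagonal bound $|[G_\ast(w)]_{kk} - m_\infty(w)| = O(F_w(\xi\Phi(w)))$ becomes $|G_{kk}(z) - z m_\infty(z^2)| = O(|z| F_{z^2}(\xi\Phi(z^2)))$, and the off-diagonal bound $|[G_\ast(w)]_{k\ell}| = O(\xi\Phi(w))$ becomes $|G_{k\ell}(z)| = O(|z|\xi\Phi(z^2))$, matching statement (II) exactly. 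For the converse (II)$\Rightarrow$(I), I would select a square root $z = \sqrt{w}$ in the appropriate half-plane of $\C$, apply (II) at this $z$, and divide through by $|z|$. The $G_{\ast,+}$ version of both implications proceeds identically using the case $i, j \leq M$ of Lemma~\ref{lemma:GFrelations}, since the spectral identity there is $G_{ij}(z) = z \, [G_{\ast,+}(z^2)]_{ij}$ with the same algebraic structure.

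The principal technicality, and what I expect to occupy the bulk of a careful write-up, is confirming that the substitution $z \leftrightarrow z^2$ preserves the hypotheses of Theorem~\ref{theorem:locallaw}, namely the repulsion $|E| > \e$ and the growth $\eta \gg \xi^2/N$, possibly after adjusting the constant $\e$. Writing $z = E + i\eta$, the image $w = z^2 = (E^2 - \eta^2) + 2iE\eta$ satisfies $|\on{Im}(w)| = 2|E|\eta \geq 2\e\eta \gg \xi^2/N$, which handles the imaginary growth condition. The real-part condition requires $|E^2 - \eta^2| \geq \e'$ for some $\e' > 0$, which follows once one restricts attention to $\eta$ bounded by a suitable constant multiple of $|E|$; this is compatible with the local-law regime, in which $\eta$ is small. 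The inverse map $w \mapsto \sqrt{w}$ is treated analogously, using $|\on{Im}(z)| = |\on{Im}(w)| / (2|\on{Re}(z)|)$ and the fact that $|\on{Re}(z)|$ is bounded below uniformly on any fixed $U_\e$.

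Beyond this bookkeeping the proof is immediate: the equivalence is to be understood as stating that (I) on $U_{\e_1}$ is equivalent to (II) on $U_{\e_2}$ for appropriate positive constants $\e_1, \e_2$, with no loss of precision in the error estimates since Lemma~\ref{lemma:GFrelations} is an exact identity rather than an approximation. The probability $1 - e^{-\xi \log \xi}$ transfers verbatim from one statement to the other, as no additional union bound over $z$ is needed.
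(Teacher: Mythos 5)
Your approach is exactly the one the paper intends: Lemma \ref{lemma:reduction1} is stated immediately after Lemma \ref{lemma:GFrelations} with no further proof, and the exact identity $G_{kk}(z)=z[G_\ast(z^2)]_{kk}$ (and likewise $G_{k\ell}(z)=z[G_\ast(z^2)]_{k\ell}$, with the $\ast,+$ analogue for $i,j\leq M$) gives the translation verbatim, with the probability event transferring without a union bound since it is uniform in $z$. One place your domain discussion should be tightened: restricting to $\eta$ a small multiple of $|E|$ does not cover the full range $\eta\in[\xi^2/N,\eta_\infty]$ allowed by the theorem, so in the implication (I)$\Rightarrow$(II) the regime $\eta\gtrsim|E|$ (where $\operatorname{Re}(z^2)$ can vanish) must be handled separately via the trivial a priori bounds $|G_{ij}|\leq\eta^{-1}$ and $|zm_\infty(z^2)|=O(1)$; similarly in (II)$\Rightarrow$(I) your claim that $|\operatorname{Re}(\sqrt{w})|$ is bounded below on $U_{\e}$ fails when $\operatorname{Re}(w)<-\e$, but there the estimate is again trivial because $w$ is uniformly bounded away from the (nonnegative) spectrum of $X_\ast$. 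With those two trivial regimes inserted, the argument is complete and matches the paper.
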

From Lemma \ref{lemma:GFrelations}, we also deduce the next reduction.
\begin{lemma} \label{lemma:STlinearequivalence}
Assuming the setting of Theorem 4.1, the following estimates are equivalent for any $z \in \C_+$:
\begin{align}
\left| s_b(z) - zm_{\infty,+}(z^2) \right| \ &= \ O \left( z F_{z^2}(\xi \Phi(z^2)) \right),  \\
\left| s_w(z) - z m_{\infty}(z^2) \right| \ &= \ O \left( z F_{z^2}(\xi \Phi(z^2)) \right). 
\end{align}
Similarly, the following estimates are equivalent for any $z \in \C_+$:
\begin{align}
\left| s_{\ast}(z) - m_{\infty}(z) \right| \ &= \ O \left( F_z(\xi \Phi) \right), \\
\left| s_{\ast,+}(z) - m_{\infty,+}(z) \right| \ &= \ O \left( F_z(\xi \Phi) \right).
\end{align}
\end{lemma}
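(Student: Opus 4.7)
The plan is to derive both equivalences from purely algebraic identities: a trace identity relating $s_{\ast}$ and $s_{\ast,+}$ via the spectral correspondence of Proposition \ref{prop:spectralcorrespondence}, together with the entrywise relation between $G(z)$ and $G_{\ast}(z^2), G_{\ast,+}(z^2)$ recorded in Lemma \ref{lemma:GFrelations}. No probabilistic input is needed; the statement is a deterministic consequence of these two results.

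For the second pair of estimates, I will first invoke Proposition \ref{prop:spectralcorrespondence}(II), which states that $\sigma(HH^{\ast}) = \sigma(H^{\ast}H) \cup \zeta^2(X)$, where $\zeta^2(X)$ consists of exactly $M-N$ additional zero eigenvalues. Writing out the spectral representations of $s_\ast$ and $s_{\ast,+}$ and splitting the latter sum according to this decomposition yields the deterministic identity
\begin{align}
Ms_{\ast,+}(z) \;=\; Ns_\ast(z) \;-\; \frac{M-N}{z}, \qquad\text{i.e.,}\qquad s_{\ast,+}(z) \;=\; \gamma\, s_\ast(z) + \frac{\gamma-1}{z}.
\end{align}
Comparing with the definition $m_{\infty,+}(z) = \gamma m_\infty(z) + (\gamma-1)/z$, the pole terms cancel and I obtain the exact relation $s_{\ast,+}(z) - m_{\infty,+}(z) = \gamma\bigl(s_\ast(z) - m_\infty(z)\bigr)$. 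Since $\gamma \in (0,1]$ is bounded above by $1$ and bounded below by a positive constant under the standing assumption $\alpha \geq 1$, the bound $O(F_z(\xi\Phi))$ on either side transfers to the other up to an absolute multiplicative constant.

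For the first pair, I will combine this trace identity with Lemma \ref{lemma:GFrelations}. Under the convention that black vertices occupy indices $1,\dots,M$ and white vertices the remainder, Lemma \ref{lemma:GFrelations} gives $G_{ii}(z) = z\,[G_{\ast,+}(z^2)]_{ii}$ for $i \leq M$ and $G_{kk}(z) = z\,[G_\ast(z^2)]_{k-M,\,k-M}$ for $k > M$. Averaging over the appropriate blocks yields
\begin{align}
s_b(z) \;=\; z\, s_{\ast,+}(z^2), \qquad s_w(z) \;=\; z\, s_\ast(z^2).
\end{align}
Substituting the deterministic identity for $s_{\ast,+}$ at spectral parameter $z^2$, and doing the analogous substitution $z\,m_{\infty,+}(z^2) = \gamma\cdot z\,m_\infty(z^2) + (\gamma-1)/z$ on the limiting side, produces the exact relation
\begin{align}
s_b(z) - z\, m_{\infty,+}(z^2) \;=\; \gamma\bigl(s_w(z) - z\, m_\infty(z^2)\bigr),
\end{align}
from which the equivalence follows as in the previous case.

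There is no genuine obstacle; the entire argument is algebraic manipulation of spectral representations together with the block structure of $X$. The only conceptual point worth highlighting is that the extra zero eigenvalues from the kernel mismatch between $HH^\ast$ and $H^\ast H$ contribute precisely the pole term $(\gamma-1)/z$ that distinguishes $m_{\infty,+}$ from the naive guess $\gamma m_\infty$, so the discrepancies on either side of each equivalence differ by an exact factor of $\gamma$.
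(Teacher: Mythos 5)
Your proof is correct and is exactly the argument the paper leaves implicit: the paper states Lemma \ref{lemma:STlinearequivalence} with only the one-line remark that it follows from Lemma \ref{lemma:GFrelations}, and your derivation --- averaging Lemma \ref{lemma:GFrelations} over blocks to get $s_b(z) = z s_{\ast,+}(z^2)$, $s_w(z) = z s_\ast(z^2)$, and then using the exact trace identity $s_{\ast,+}(z) = \gamma s_\ast(z) + (\gamma-1)/z$ from Proposition \ref{prop:spectralcorrespondence}(II) to cancel the pole against $m_{\infty,+}$ --- fills in precisely those details. One minor caveat: the reverse implication in each equivalence divides by $\gamma$, so it uses that $\gamma$ is bounded below by a positive constant; the standing assumption $\alpha \geq 1$ alone only gives $\gamma \leq 1$, though a lower bound on $\gamma$ is implicit in any nondegenerate Marchenko--Pastur setup.
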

We briefly note that Lemma \ref{lemma:STlinearequivalence} improves upon Lemma \ref{lemma:reduction1} in that it removes the restriction $|E| > \e$ on the energy.

We are now in a position to make our final reduction of the proof of the local laws in Theorem 4.1, which exploits the first reduction in Lemma \ref{lemma:reduction1} and thus allows us to focus on the covariance matrices $X_{\ast}$ and $X_{\ast,+}$. The reduction will depend on the following result, for which we need to define the following spectral domain.
\begin{align}
\mathbf{D}_{N, \delta, \xi} \ = \ \{ z = E + i \eta: \ |E| \leq N^{\delta}, \ \xi^2/N \ \leq \ \eta \ \leq \ N \}.
\end{align}
%
\begin{prop} \label{prop:selfimprovingestimate}
Suppose $\xi, \zeta > 0$ and $D \gg \xi^2$. If, for a fixed $z \in \mathbf{D}_{N, \delta, \xi}$, we have 
\begin{align}
\mathbb{P} \left( \Gamma^*_{\star}(z) = O(1) \right) \ \geq \ 1 - e^{- \zeta},
\end{align}
then, with probability at least $1 - e^{- (\xi \log \xi) \wedge \zeta + O(\log N)}$, we have
\begin{align}
\max_i \ |[G_{\star}(z)]_{ii} - m(z)| \ = \ O(F_z(\xi \Phi(z))), \ \ \ \max_{i \neq j} \ |[G_{\star}(z)]_{ij}| \ = \ O \left( \frac{\xi \Phi(z^2)}{z} \right). \label{eq:locallawconditioned}
\end{align}
Here, $\star$ can take the values $\star = \ast$ and $\star = \ast, +$.
\end{prop}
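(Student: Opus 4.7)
The plan is to derive an approximate self-consistent equation for the diagonal entries of $G_\star(z)$ using the switching dynamics of Section 3, control the resulting fluctuations by concentration, and then invert the Marchenko--Pastur fixed-point equation via stability analysis in order to convert the hypothesis $\Gamma^*_\star(z) = O(1)$ into the improved bounds \eqref{eq:locallawconditioned}.

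\smallskip

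First I would use Lemma \ref{lemma:switchperturbmatrix} together with the resolvent identity \eqref{eq:GFresolventidentity} to relate $[G_\star(z)]_{ii}$ under the original graph $A$ to its value under an independent resampling $\wt A$. Since $\wt A - A$ is a sum of $O(1)$ rank-one terms $\Delta_{xy}$, iterating \eqref{eq:GFresolventidentity} finitely many times expresses $G_\star - \wt G_\star$ as a finite sum of products of Green's function entries; each factor is $O(1)$ deterministically thanks to the a priori hypothesis $\Gamma^*_\star(z) = O(1)$ and Lemma \ref{lemma:GFrelations}. Averaging this expansion over a full global switching and applying the approximate-uniformity estimates of Lemma \ref{lemma:approxuniformedges} and Corollary \ref{corollary:switchingverticesapproxuniform} converts random index sums into empirical Stieltjes-type traces, yielding an identity of the schematic form
\[
[G_\star(z)]_{ii} \;=\; \mathcal{P}\bigl(z, s_\star(z)\bigr) \;+\; \mathrm{Err}_i(z),
\]
where $\mathcal{P}$ is the rational expression whose fixed-point equation in its second argument is precisely the Marchenko--Pastur self-consistent equation solved by $m_\infty(z)$ (respectively $m_{\infty,+}(z)$).

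\smallskip

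The fluctuation $\mathrm{Err}_i(z)$ is then controlled by a martingale/Azuma-type argument over the $O(N d_b)$ independent switching increments. Lemma \ref{lemma:howtocomputeswitchperturbations} ensures that $W \Delta \wt W = \emptyset$ except with probability $O(d_b / N)$, and on the complementary event each single-switching perturbation of $[G_\star(z)]_{ii}$ is deterministically bounded --- via Ward's identity (Corollary \ref{corollary:ward}) applied to $\on{Im}[G_\star(z)]_{ii}$ together with the a priori hypothesis --- by a quantity whose martingale increments aggregate to $O(\xi \Phi(z))$ with probability $1 - e^{-\xi \log \xi}$. The two pieces of $\Phi(z) = (N\eta)^{-1/2} + D^{-1/2}$ arise respectively from the Ward control on row sums of $G_\star$ and from the combinatorial structure of a single switching step. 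A union bound over the $N$ diagonal indices (and the $N^2$ off-diagonal pairs, for the second estimate in \eqref{eq:locallawconditioned}) accounts for the $O(\log N)$ loss in the probability exponent. Inverting the approximate self-consistent equation by standard stability analysis of the Marchenko--Pastur cubic --- whose stability factor near the spectral edge is exactly $1/\sqrt{(\lambda_+ - z)(z - \lambda_-)}$, as encoded in $F_z$ --- then converts the raw fluctuation bound into $\max_i |[G_\star(z)]_{ii} - m_\infty(z)| = O(F_z(\xi\Phi))$. The off-diagonal bound follows from a parallel switching expansion of $[G_\star(z)]_{ij}$ for $i \neq j$, in which the leading-order term vanishes and only the fluctuation of order $\xi \Phi(z^2)/z$ survives, the factor $1/z$ arising from the conversion between $G$ and $G_\star$ in Lemma \ref{lemma:GFrelations}.

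\smallskip

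The main obstacle is the second step: extracting the sharp rate $O(\xi \Phi(z))$ for $\mathrm{Err}_i$ with the required exponential tail. Because the covariance matrix depends quadratically on the adjacency matrix, a single switching alters $G_\star$ through several nonlinear Green's function terms, and concentrating each such term in isolation would yield a suboptimal rate. The resolution, modelled on \cite{BKY}, is to identify exactly which nonlinear combinations cancel against the mean prediction $\mathcal{P}(z, s_\star(z))$ before invoking concentration; this cancellation is precisely what the approximate uniformity of Corollary \ref{corollary:switchingverticesapproxuniform} and the almost-sure bound \eqref{eq:almostsureperturbationestimate} are designed to enable. The additional difficulty compared with the Wigner setting of \cite{BKY} is that the leading coefficient in the Marchenko--Pastur self-consistent equation is not constant but depends on $z$, so one must propagate this $z$-dependence carefully through both the concentration step and the stability inversion.
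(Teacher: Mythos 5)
Your proposal follows essentially the same route the paper takes: a switching-based derivation of an approximate self-consistent equation (via Lemma \ref{lemma:conditionalexpectationscederivation}), an Azuma/martingale-type concentration estimate for the $\mathscr{F}_0$-fluctuations (the paper's Proposition \ref{prop:concentrationestimate}, imported from \cite{BKY}), and a stability analysis of the Marchenko--Pastur quadratic (Proposition \ref{prop:stabilityestimateaverage}), with the $z$-dependent leading coefficient correctly flagged as the new difficulty relative to the Wigner case. The one detail to tighten is that the resolvent/switching expansion should be carried out on the linearized Green's function $G(z) = (X-z)^{-1}$, for which $\wt X - X$ genuinely is a finite sum of rank-one $\Delta_{xy}$'s, rather than directly on $G_\star$ (since $X_\ast - \wt X_\ast = H^\ast H - \wt H^\ast \wt H$ is quadratic in the adjacency perturbation and the resulting resolvent factors would involve $G_\ast H^\ast$ rather than plain Green's function entries); the paper does exactly this and only converts to $G_\star$ at the end via Lemma \ref{lemma:GFrelations}, as you implicitly acknowledge when you invoke that lemma to account for the $1/z$ factor in the off-diagonal bound.
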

To deduce Theorem \ref{theorem:locallaw} from Proposition \ref{prop:selfimprovingestimate}, we follow the exactly the argument used to deduce Theorem 1.1 from Proposition 2.2 in \cite{BKY}. The only thing we need to check to apply the same argument are the following bounds:
\begin{align}
m_{\infty}(z), m_{\infty, +}(z) \ \leq \ C,
\end{align}
for some constant $C = O(1)$. These estimates are proven in the appendix of this paper. We may also extend this argument to remove the energy repulsion assumption $|E| > \e$, which we precisely state in the following lemma.
\begin{lemma} \label{lemma:bootstrapenergy}
Suppose Proposition \ref{prop:selfimprovingestimate} holds. Then the estimates \emph{(4.10)} and \emph{(4.11)} hold without the assumption $|E| > \e$. Consequently, the estimates \emph{(4.14)} and \emph{(4.15)} hold without the assumption $|E| > \e$. Moreover, if $\alpha > 1$, then the estimate \emph{(4.7)} holds without the assumption $|E| > \e$.
\end{lemma}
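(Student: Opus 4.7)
The plan is to invoke Proposition \ref{prop:selfimprovingestimate} directly, since it is stated on the full spectral domain $\mathbf{D}_{N, \delta, \xi}$ with no restriction on the real part of the spectral parameter. The role of the assumption $|E| > \e$ in Theorem \ref{theorem:locallaw} was solely to guarantee that the limiting objects $m_{\infty}$ and $m_{\infty, +}$ stay bounded along the standard bootstrap path in $\eta$; the conclusion of Proposition \ref{prop:selfimprovingestimate} itself makes no such demand. Thus the task reduces to verifying, for each of the estimates in question, that the a priori input $\Gamma^*_{\star}(z) = O(1)$ can be propagated from $\eta = N$ (where $\Gamma \leq \eta^{-1} = N^{-1}$ is trivial) down to the scale $\eta \gg \xi^2/N$ required, uniformly over $z = E + i\eta$ without excluding $|E| \leq \e$.

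For the estimate (4.7) in the regime $\alpha > 1$, I would first observe that $\lambda_{-} = (1 - \sqrt{\gamma})^2 > 0$, so the Marchenko-Pastur density $\varrho_{\infty}$ has support bounded away from the origin. Consequently $m_{\infty}(z)$ extends analytically across $z = 0$ and remains $O(1)$ in a full neighborhood of the origin, a fact recorded in the appendix. The standard bootstrap in $\eta$, combining Lemma \ref{lemma:GFLipschitzscaling} with Proposition \ref{prop:selfimprovingestimate} exactly as in the argument deducing Theorem 1.1 from Proposition 2.2 in \cite{BKY}, then carries through for all $z = E + i \eta \in \C_+$ without any lower bound on $|E|$.

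For the linearization estimates (4.10) and (4.11) at black indices $i \leq M$, I would pass through the covariance picture using Lemma \ref{lemma:GFrelations}: $[G(z)]_{ij} = z [G_{\ast, +}(z^2)]_{ij}$ and, after a short computation, $m_+(z) = z m_{\infty, +}(z^2)$, so
\begin{align}
\left| [G(z)]_{ii} - m_+(z) \right| \ = \ |z| \left| [G_{\ast,+}(z^2)]_{ii} - m_{\infty,+}(z^2) \right|
\end{align}
and analogously for off-diagonals. The key point is that the apparent $(\gamma-1)/z$ pole of $m_{\infty,+}$ at the origin is neutralized by the prefactor $z$ coming from the linearization; moreover, for $z = E + i\eta$ with $E$ close to $0$, the image $z^2 = E^2 - \eta^2 + 2iE\eta$ lies in a region on which both the covariance resolvent (whose spectrum is contained in $[0, \infty)$) and $m_{\infty,+}$ are controlled. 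Thus the bootstrap in $\eta$ can be carried out for $G_{\ast,+}$ evaluated along the image curve $\{z^2\}$ for all $z$ in question, which produces the desired estimates for $G$ without the $|E| > \e$ restriction.

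Having established (4.10) and (4.11), the partial Stieltjes transform estimates (4.14) and (4.15) follow by averaging the respective diagonal entries, with Lemma \ref{lemma:STlinearequivalence} packaging the results and confirming that the singular structure of $s_b(z)$ and $s_w(z)$ at the origin matches that of $m_+(z)$ and $m(z)$, respectively. The main obstacle in the whole argument is precisely this verification: one must confirm that $m_{\infty, +}(z^2)$ remains $O(1)$ uniformly along the rescaled bootstrap path, particularly as the curve $\{z^2\}$ approaches the hard edge at $0$ in the regime $\alpha = 1$, where the square-root singularity of $\varrho_{MP}$ at the origin must be integrated against carefully. This boundedness is the technical heart of the argument and is deferred to the appendix, where the relevant pointwise estimates on the Marchenko-Pastur Stieltjes transforms are derived.
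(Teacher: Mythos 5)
Your high-level plan — bootstrap in $\eta$ from the trivial region $\eta \sim N$ down to $\eta \gg \xi^2/N$ via Proposition \ref{prop:selfimprovingestimate} and Lemma \ref{lemma:GFLipschitzscaling}, with the whole argument reducing to uniform boundedness of the relevant limiting Stieltjes transform near the origin — is exactly the paper's route, as is your treatment of \eqref{eq:GFlocallawsmall} in the regime $\alpha > 1$ via $\lambda_- > 0$ forcing $m_{\infty} = O(1)$ across a neighborhood of $z = 0$.

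However, your treatment of the first pair of estimates contains a genuine error. Those estimates concern the white-indexed block of the linearization, i.e.\ $[G(z)]_{k\ell}$ for $k, \ell > M$, which by Lemma \ref{lemma:GFrelations} transfers to $G_{\ast}$ and the limiting object $m(z) = z\, m_{\infty}(z^2)$ — not to $G_{\ast,+}$ and $m_{+}$, which govern the black-indexed block $i, j \leq M$. More importantly, the cancellation you invoke is false: the pole of $m_{\infty,+}(z) = \gamma m_{\infty}(z) + \frac{\gamma - 1}{z}$ becomes $\frac{\gamma - 1}{z^2}$ after the substitution $z \mapsto z^2$, so the single factor of $z$ from the linearization only reduces it to $\frac{\gamma - 1}{z}$ — $m_{+}(z) = z\, m_{\infty,+}(z^2)$ still blows up at the origin whenever $\gamma \neq 1$. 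This is precisely why Theorem \ref{theorem:locallaw} only removes the $|E| > \e$ restriction on the black-indexed estimates in the regime $\alpha = 1$, where the troublesome term $(\gamma - 1)/z$ vanishes identically. The correct mechanism for the white-indexed estimates is entirely different: $m_{\infty}$ itself has no $z^{-1}$ pole (only an inverse-square-root singularity at the hard edge when $\gamma = 1$), and the factor $z$ in $m(z) = z\, m_{\infty}(z^2)$ cancels that milder singularity, giving $m(z) = O(1)$ uniformly over $\C_+$ — this is the content of Lemma \ref{lemma:STMPlinearbound} in the appendix, which is the input the paper actually uses. Correcting the block identification and replacing the spurious pole cancellation with the genuine bound $m(z) = O(1)$ from the appendix would repair the argument.
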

\begin{proof}
We may again apply the iteration scheme used in proving Theorem 1.1 from Proposition 2.2 in \cite{BKY}. Here, we need to check the estimate $m(z) = O(1)$ and that in the regime $\alpha > 1$, we have the estimate $m_{\infty}(z) = O(1)$. These are similarly derived in the appendix of this paper.
\end{proof}
Thus, contingent on estimates derived in the appendix, to prove Theorem \ref{theorem:locallaw} it suffices to prove Proposition \ref{prop:selfimprovingestimate}. This will be the focus for remainder of the paper. In particular, we may now work with an explicitly smaller domain $\mathbf{D}_{N, \delta, \xi}$ and an a priori estimate on the maximal Green's function.
\subsection{Switchings on Green's Functions}
The main result in this subsection consists of the following estimates comparing Green's function entries to index-wise averages. These estimates will be fundamental to controlling terms that show up naturally in the derivation of the self-consistent equation. Before we can state the result, we need to first introduce a notion of high probability used throughout the remainder of this paper.
\begin{definition}
Fix a parameter $t = t_N \gg \log N$, and a probability space $\Omega$. We say an event $\Xi \subset \Omega$ holds with $t$-\emph{high probability}, or $t$-HP for short, if
\begin{align}
\mathbb{P} \left( \Xi^{C} \right) \ \leq \ e^{-t + O(\log N)}.
\end{align}
\end{definition}
As suggested in Theorem 4.1, we will take the parameter $t = (\xi \log \xi) \wedge \zeta$. We now state the main estimates.
\begin{lemma} \label{lemma:conditionalexpectationscederivation}
Fix a vertex $i = v_b \in V_b$ and an edge label $\mu \in [[1, d_b]]$. Suppose $z = E + i \eta \in \C_+$ satisfies the following constraints for a fixed $\e > 0$:
\begin{align}
|E| \ > \ \e, \quad \eta \ \gtrsim \ \frac{1}{N}. \label{eq:Uelevel}
\end{align}
Suppose further that $\Gamma = O(1)$ holds with $t$-HP. Then for all fixed indices $j, \ell, r$ we have
\begin{align*}
\E_{\mathscr{F}_0} \left( G_{v_{b,\mu} j} - \frac{1}{N} \sum_{k = M+1}^{M+N} \ G_{k j} \right) \ &= \ - \E_{\mathscr{F}_0} \left( d_w^{-1/2} s_w G_{ij} \right) + O(d_w^{-1/2} \Phi), \\
\E_{\mathscr{F}_0} \left[ G_{\ell r}\left( G_{v_{b,\mu} j} - \frac{1}{N} \sum_{k = M+1}^{M+N} \ G_{k j} \right) \right] \ &= \ -\E_{\mathscr{F}_0} \left( G_{\ell r} d_w^{-1/2} s_w G_{ij} \right) + O(d_w^{-1/2} \Phi).
\end{align*}
Similarly, fix a vertex $k = v_w \in V_w$ and $\mu \in [[1, d_w]]$, and suppose $z \in \C_+$ satisfies the constraints \eqref{eq:Uelevel}. Further suppose $\Gamma = O(1)$ with $t$-HP. Then for all fixed indices $j, \ell, r$, we have
\begin{align*}
\E_{\mathscr{F}_0} \left( G_{v_{w,\mu} j} - \frac{1}{M} \sum_{i = 1}^{M} \ G_{i j} \right) \ &= \ - \E_{\mathscr{F}_0} \left( d_w^{-1/2} s_b G_{kj} \right) + O(d_w^{-1/2} \Phi), \\
\E_{\mathscr{F}_0} \left[ G_{\ell r}\left( G_{v_{w,\mu} j} - \frac{1}{M} \sum_{i = 1}^{M} \ G_{i j} \right) \right] \ &= \ -\E_{\mathscr{F}_0} \left( G_{\ell r} d_w^{-1/2} s_b G_{kj} \right) + O(d_w^{-1/2} \Phi).
\end{align*}
\end{lemma}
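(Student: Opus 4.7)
The natural approach is to use the switching dynamics of Section 3 in conjunction with the resolvent identity to compare the particular Green's function entry $G_{v_{b,\mu} j}$ to its average over all white indices, and to isolate the leading correction. My plan is to set up an independent resampling of the edge $e_{b,\mu}$: introduce a tilded copy $\wt{A} = \wt{T}_{b,\mu}(\wt{A})$ drawn i.i.d.\ from the switching procedure, with corresponding normalized block matrix $\wt{X}$, Green's function $\wt{G}$, and new $\mu$-th neighbor $\wt{v}_{b,\mu}$. Since the measure on $\Omega$ is uniform and the construction of $\wt{T}_{b,\mu}$ is symmetric, the pair $(E, v_{b,\mu})$ and $(\wt{E},\wt{v}_{b,\mu})$ are equidistributed, so under the switching expectation $\E G_{v_{b,\mu} j} = \E \wt{G}_{\wt{v}_{b,\mu} j}$, and likewise after multiplication by the $\mathscr{F}_0$-measurable factor $G_{\ell r}$.

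I then split $\wt{G}_{\wt{v}_{b,\mu} j} = G_{\wt{v}_{b,\mu} j} + (\wt{G} - G)_{\wt{v}_{b,\mu} j}$. For the first term, Corollary~\ref{corollary:switchingverticesapproxuniform} says $\wt{v}_{b,\mu}$ is approximately uniform on $V_w$ conditionally on $\mathscr{G}_\mu$, so Definition~\ref{definition:approxuniform} combined with the a priori bound $\Gamma^\ast = O(1)$ yields $\E G_{\wt{v}_{b,\mu} j} = \frac{1}{N}\sum_{k>M} G_{kj} + O(d_w^{-1/2}\Phi)$, using that $1/\sqrt{d_w D} \leq d_w^{-1/2}\Phi$. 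For the second term, I apply Lemma~\ref{lemma:resolventidentity} to write $(\wt{G}-G)_{\wt{v}_{b,\mu} j} = [G(X - \wt{X})\wt{G}]_{\wt{v}_{b,\mu} j}$, and then invoke Lemma~\ref{lemma:switchperturbmatrix} to expand $\wt{X} - X$ as $d_w^{-1/2}$ times a sum of $O(1)$ elementary rank-two perturbations $\Delta_{xy}$, each having at least one index approximately uniform.

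The leading contribution comes from the piece of $A - \wt{A}$ that introduces the new edge $\Delta_{v_b,\wt{v}_{b,\mu}}$: expanding the resolvent product yields a term proportional to $G_{\wt{v}_{b,\mu},\wt{v}_{b,\mu}}\wt{G}_{v_b j}$. Averaging $\wt{v}_{b,\mu}$ over $V_w$ produces the diagonal sum $s_w$, and replacing $\wt{G}_{v_b j}$ by $G_{ij}$ (at the cost of one more resolvent-identity error, which is subleading) gives exactly $-d_w^{-1/2} s_w G_{ij}$. Every other product of Green's function entries appearing in the expansion is off-diagonal in at least one slot and is therefore bounded, via Corollary~\ref{corollary:ward} and the approximate uniformity of the perturbation vertices, by $O(d_w^{-1/2}\Phi)$ on the event $\Gamma^\ast = O(1)$. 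The estimate with the extra multiplicative factor $G_{\ell r}$ follows identically since $|G_{\ell r}|\leq \Gamma^\ast = O(1)$ on the good event. The case $k = v_w \in V_w$ is entirely symmetric upon swapping colors, and the normalization $d_w^{-1/2}$ persists because it originates from the definition $H = d_w^{-1/2}(A - d_b \mathbf{e}_b \mathbf{e}_w^\ast)$; only the trace factor flips to $s_b$ because one now averages over indices in $V_b$.

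The main obstacle is the bookkeeping in the resolvent-identity expansion: one must isolate exactly the $s_w G_{ij}$ term while demonstrating that all other terms in the $O(1)$-length decomposition of $\wt{A} - A$ are genuinely $O(d_w^{-1/2}\Phi)$. This requires using Corollary~\ref{corollary:switchingverticesapproxuniform} (to replace vertices in the perturbation by marginals, converting resolvent sums to averages controlled by $\Phi$) together with Ward's identity (to handle sums of squared off-diagonal resolvent entries) simultaneously, while carefully absorbing each copy of $\Gamma^\ast = O(1)$ that appears when a single Green's function entry must be estimated pointwise. A secondary subtlety is the replacement of $\wt{G}$ by $G$ inside the leading-order computation, which costs an additional resolvent-identity expansion but stays within the $O(d_w^{-1/2}\Phi)$ error budget.
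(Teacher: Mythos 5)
Your proposal is correct and follows essentially the same route as the paper's proof: equidistribution under resampling to replace $G_{v_{b,\mu}j}$ by $\wt{G}_{\wt{v}_{b,\mu}j}$, approximate uniformity of $\wt{v}_{b,\mu}$ (Corollary \ref{corollary:switchingverticesapproxuniform}) to relate this to the index-average, the resolvent identity combined with Lemma \ref{lemma:switchperturbmatrix} to expand $\wt{X}-X$ into $O(1)$ elementary perturbations, isolating the $\Delta_{v_b\wt{v}_{b,\mu}}$ contribution to extract the $s_w G_{ij}$ term, and controlling the remaining off-diagonal cross-terms via the $L^2$ bound of Lemma \ref{lemma:preliminaryestimategreensfunction} (which is Ward's identity at heart) together with Cauchy--Schwarz. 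The bookkeeping concerns you flag (exchanging $\wt{G}$ for $G$ at the cost of a subleading resolvent correction, and absorbing factors of $\Gamma^\ast = O(1)$) are exactly what the paper handles via Lemma \ref{lemma:preliminaryestimategreensfunction} and Corollary \ref{corollary:takemaxperturbgreensfunction}.
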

Before we provide a proof of this result, we will need an auxiliary estimate comparing Green's function entries from i.i.d. samples of graphs. To state this auxiliary estimate, we define first the conditional maximal Green's functions for any fixed edge label $\mu \in [[1, d_b]]$ or $\nu \in [[1, d_w]]$:
\begin{align}
\Gamma_{\mu} \ = \ \Gamma_{\mu}(z) \ := \ \| \Gamma(z) \|_{L^{\infty}(\mathscr{G}_{\mu})}, 
\end{align}
where the notation $L^{\infty}(\mathscr{G}_{\mu})$ in the norm denotes the $L^{\infty}$-norm conditioning on the $\sigma$-algebra $\mathscr{G}_{\mu}$.
\begin{lemma} \label{lemma:preliminaryestimategreensfunction}
For any fixed indices $i, j \in [[1, M+N]]$ and any label $\mu \in [[1, d_b]]$, we have
\begin{align}
G_{ij} - \wt{G}_{ij} \ = \ O(d_w^{-1/2} \Gamma_{\mu} \Gamma).  \label{eq:perturbresample}
\end{align}
Moreover, suppose $x, y$ are random variables such that, conditioned on $\mathscr{G}_\mu$ and $x$, the random variable $y$ is approximately uniform. Then we have
\begin{align}
\E_{\mathscr{G}_{\mu}} \left| G_{xy} \right|^2 \ = \ O(\Gamma_{\mu}^4 \Phi^2). \label{eq:corollaryofperturbresample}
\end{align}
The estimate \label{eq:perturbresample} also holds for any fixed label $\nu \in [[1,d_w]]$ of a white vertex.
\end{lemma}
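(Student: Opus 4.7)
The plan is to prove the first estimate by combining the resolvent identity with the explicit description of $\wt A - A$ furnished by Lemma \ref{lemma:switchperturbmatrix}, and the second by combining the approximate uniformity of Definition \ref{definition:approxuniform} with the Ward identity. Both parts follow the blueprint of Section 6 of \cite{BKY}, with the main new point being that the error terms must be calibrated against $\Phi$, which is the parameter naturally adapted to biregular (rather than regular) graphs.

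For the perturbation bound, I begin from the resolvent identity $G - \wt G = G(\wt X - X)\wt G$ (Lemma \ref{lemma:resolventidentity}). Because the rank-one mean-field correction $d_b \mathbf{e}_b \mathbf{e}_w^{\ast}$ is identical for every biregular graph with the prescribed parameters, it cancels in $\wt H - H$, so that $\wt X - X$ is exactly $d_w^{-1/2}$ times the block-promotion of $\wt A - A$. Lemma \ref{lemma:switchperturbmatrix} then says that $\wt A - A$ is almost surely a sum of $O(1)$ single-edge perturbations $\Delta_{xy}$, so the $(i,j)$-th entry of $G(\wt X - X)\wt G$ is a sum of $O(1)$ terms of the form $G_{ix}\wt G_{yj} + G_{iy}\wt G_{xj}$. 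I bound the entries of $G$ by $\Gamma$ and the entries of $\wt G$ by $\Gamma_\mu$; the latter is legitimate because conditioning on $\mathscr{G}_\mu$ fixes every piece of switching data except the $\mu$-th, which is the only slot altered by the $\wt{\,\cdot\,}$-resampling. This yields the claimed $O(d_w^{-1/2} \Gamma_\mu \Gamma)$ bound.

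For the second estimate I use tower expectation $\E_{\mathscr{G}_\mu} |G_{xy}|^2 = \E_{\mathscr{G}_\mu} [ \E[|G_{xy}|^2 \mid \mathscr{G}_\mu, x]]$ and exploit the approximate uniformity of $y$ conditional on $(\mathscr{G}_\mu, x)$. Applying Definition \ref{definition:approxuniform} to the bounded function $F(y) = |G_{xy}|^2$, which satisfies $\|F\|_\infty \le \Gamma_\mu^2$, gives
\[
\E[|G_{xy}|^2 \mid \mathscr{G}_\mu, x] \ = \ \frac{1}{|V|} \sum_{k \in V} |G_{xk}|^2 \ + \ O\!\left( \frac{\Gamma_\mu^2}{\sqrt{d_w D}} \right),
\]
where $V$ is the target vertex class. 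The Ward identity (Corollary \ref{corollary:ward}) bounds the partial sum by $\on{Im} G_{xx}/\eta \le \Gamma_\mu/\eta$, so the main term is $O(\Gamma_\mu/(N\eta)) \le O(\Gamma_\mu^4 \Phi^2)$. For the error term, since $D = d_b \wedge N^2/d_b^3 \le d_b \le d_w$ under $\alpha \ge 1$, we have $1/\sqrt{d_w D} \le 1/D$, and hence the error is $O(\Gamma_\mu^2/D) \le O(\Gamma_\mu^4 \Phi^2)$. Combining the two contributions gives the bound.

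The only step requiring genuine care is matching the approximate-uniformity error to the $\Phi^2$ target; this rests on the structural inequality $D \le d_w$, which is precisely where the biregularity assumption $\alpha \ge 1$ enters. The case of resampling at a white vertex $v_w$ along a label $\nu$ follows by symmetrically exchanging the roles of $V_b$ and $V_w$, and of $d_b$ and $d_w$, throughout.
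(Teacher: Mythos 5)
Your derivation of the first estimate \eqref{eq:perturbresample} is sound: the rank-one centering term in $H = d_w^{-1/2}(A - d_b\mathbf{e}_b\mathbf{e}_w^{\ast})$ does cancel in $\wt{H} - H$, Lemma \ref{lemma:switchperturbmatrix} gives the almost-sure representation of $\wt{A}-A$ as $O(1)$ single-edge perturbations, and bounding one Green's function factor by the realized maximum $\Gamma$ and the other by the conditional worst case $\Gamma_\mu$ is exactly how the asymmetric bound $\Gamma_\mu\Gamma$ arises.

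The second estimate, however, has a genuine gap. You apply Definition \ref{definition:approxuniform} to the ``bounded function'' $F(y) = |G_{xy}|^2$, but this step is only legitimate if $G$ and $x$ are $\mathscr{G}_\mu \vee \sigma(x)$-measurable, so that $F$ is a fixed function of $y$ when you take $\E[\,\cdot\mid \mathscr{G}_\mu, x]$. That is precisely what fails here: conditioning on $\mathscr{G}_\mu$ does \emph{not} fix the $\mu$-th resampling slot $(S_{b,\mu}, s_{b,\mu})$, and $G$ depends on that slot --- indeed, this is the entire content of your own observation in part one that ``conditioning on $\mathscr{G}_\mu$ fixes every piece of switching data except the $\mu$-th,'' and it is why the quantity $\Gamma_\mu$ (the $\mathscr{G}_\mu$-conditional essential supremum of $\Gamma$) is non-trivial at all. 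Since in the intended applications the indices $x,y$ are produced by the same $\mu$-th slot resampling, you cannot pass the approximate uniformity of $y\mid(\mathscr{G}_\mu,x)$ through the conditional expectation while leaving $G$ ``frozen.''

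The standard repair, which is also what explains the exponent $4$ rather than $2$ on $\Gamma_\mu$ in the stated bound, is to use the first part of the lemma as a preliminary step: replace $G$ by a reference Green's function $G'$ obtained from a $\mathscr{G}_\mu$-measurable choice in the $\mu$-th slot (so that $G'$, and hence $F'(y) = |G'_{xy}|^2$, is independent of $y$ given $(\mathscr{G}_\mu, x)$). Part one gives $|G_{xy} - G'_{xy}| = O(d_w^{-1/2}\Gamma_\mu^2)$, so
$|G_{xy}|^2 \le 2|G'_{xy}|^2 + O(d_w^{-1}\Gamma_\mu^4)$, and $d_w^{-1} \le D^{-1} \le \Phi^2$ handles the correction term. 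Your Ward-identity and approximate-uniformity computation is then correct when applied to $G'$ and yields $\E_{\mathscr{G}_\mu}|G'_{xy}|^2 = O(\Gamma_\mu^2\Phi^2)$, giving $O(\Gamma_\mu^4\Phi^2)$ overall. As it stands, your argument produces $O(\Gamma_\mu^2\Phi^2)$ and then invokes $\Gamma_\mu\ge 1$ to pad the exponent, which is a signal that a step has been skipped rather than a genuine strengthening of the bound.

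Your remark that $1/\sqrt{d_wD}\le 1/D$ hinges on $D\le d_b\le d_w$, hence on $\alpha\ge 1$, is a correct and worthwhile observation, and the reduction to the white-vertex case by exchanging $(V_b,d_b)\leftrightarrow(V_w,d_w)$ is fine.
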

The proof of this result follows from the proof of Lemma 3.9 in \cite{BKY} so we omit it. We now record the following consequence of Lemma \ref{lemma:preliminaryestimategreensfunction} and proceed to the proof of Lemma \ref{lemma:conditionalexpectationscederivation}.
\begin{corollary} \label{corollary:takemaxperturbgreensfunction}
In the setting of Lemma \ref{lemma:preliminaryestimategreensfunction}, we have
\begin{align}
\Gamma \ = \ \Gamma_\mu \ + \ O \left( d_w^{-1/2} \Gamma_\mu \Gamma \right).
\end{align}
\end{corollary}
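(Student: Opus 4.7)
The plan is to deduce the corollary directly from the per-entry bound in Lemma \ref{lemma:preliminaryestimategreensfunction} by maximising over index pairs and then exploiting the distributional symmetry between $G$ and $\wt G$ introduced by i.i.d.\ resampling.

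First I would combine the estimate $G_{ij} - \wt G_{ij} = O(d_w^{-1/2} \Gamma_\mu \Gamma)$ with the reverse triangle inequality and take the maximum over indices $(i,j)$. Noting that both $\Gamma$ and $\max_{i,j} |\wt G_{ij}| \vee 1$ are at least one, the $\vee 1$ in the definition of $\Gamma$ is harmless, so this yields
\[
\Big| \Gamma \ - \ \big(\max_{i,j} |\wt G_{ij}| \vee 1\big) \Big| \ = \ O(d_w^{-1/2} \Gamma_\mu \Gamma).
\]

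Next I would invoke the fact that $\wt G$ is produced from an i.i.d.\ copy of the $\mu$-th switching data, so that conditionally on $\mathscr{G}_\mu$ the random variable $\max_{i,j} |\wt G_{ij}| \vee 1$ has the same law as $\Gamma$; in particular its conditional $L^\infty$ norm also equals $\Gamma_\mu$. Keeping the non-tilded data fixed (so that $\Gamma$, $\Gamma_\mu$, and hence the right-hand side of the display are all $\mathscr{G}_\mu$-measurable and unchanged) and varying only the tilde switching, one may select the i.i.d.\ resampling so that the quantity $\max_{i,j} |\wt G_{ij}| \vee 1$ is arbitrarily close to $\Gamma_\mu$. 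This produces
\[
\Gamma_\mu \ \leq \ \Gamma \ + \ O(d_w^{-1/2} \Gamma_\mu \Gamma),
\]
and the complementary bound $\Gamma \leq \Gamma_\mu$ is immediate from the definition of $\Gamma_\mu$ as a conditional essential supremum. Combining the two inequalities gives the corollary.

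The only real obstacle is a small amount of measure-theoretic bookkeeping: verifying that the implied constant in Lemma \ref{lemma:preliminaryestimategreensfunction} is deterministic, so that the essential-supremum operation over the tilde switching does not inflate it, and checking that the $\vee 1$ truncation in the definition of $\Gamma$ poses no issue. Beyond these cosmetic points the result really is a direct corollary of the preceding lemma.
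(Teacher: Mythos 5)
Your proposal is correct and is the natural deduction that the paper leaves implicit (it states the corollary without a separate proof): take the maximum over index pairs in Lemma~\ref{lemma:preliminaryestimategreensfunction}, then pass to the conditional essential supremum over the tilde resampling — under which $\max_{i,j}|\wt{G}_{ij}| \vee 1$ is replaced by $\Gamma_\mu$ while $\Gamma$, $\Gamma_\mu$, and hence the error term are unaffected — and combine with the trivial bound $\Gamma \leq \Gamma_\mu$. One small inaccuracy is worth flagging: you assert that $\Gamma$ itself is $\mathscr{G}_\mu$-measurable. It cannot be — if it were, then $\Gamma_\mu = \|\Gamma\|_{L^\infty(\mathscr{G}_\mu)} = \Gamma$ identically and the corollary would be vacuous. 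In the paper's framework the Green's function is that of the switched matrix, which does depend on the $\mu$-th switching datum, so $\Gamma$ is not $\mathscr{G}_\mu$-measurable. The property your argument actually needs, and which is correct, is simply that $\Gamma$ and $\Gamma_\mu$ do not depend on the tilde resampling, so they pass unchanged through the essential supremum over the tilde variables. With that substitution the argument is sound, and your remark about checking that the implied constant in Lemma~\ref{lemma:preliminaryestimategreensfunction} is deterministic (so the essential supremum does not inflate it) is exactly the right point of caution.
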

\begin{proof}
(of Lemma \ref{lemma:conditionalexpectationscederivation}).

We prove the first estimate only for $v_b \in V_b$; the proof of the second estimate and the estimates for $v_w \in V_w$ are analogous. In expectation, we first note
\begin{align}
\E_{\mathscr{F}_0} G_{v_{b,\mu} j} \ = \ \E_{\mathscr{F}_0} \wt{G}_{\wt{v}_{b,\mu} j}. \nonumber
\end{align}
Moreover, because $G(z)$ is independent of the random variable $\wt{v}_{b,\mu}$, and because, conditioned on $\mathscr{G}_\mu$, the random variable $\wt{v}_{b,\mu} \in V_w$ is approximately uniform, we also know
\begin{align}
\E_{\mathscr{F}_0} \left( \frac{1}{N} \sum_{k = M+1}^{M+N} \ G_{kj} \right) \ = \ \E_{\mathscr{F}_0} G_{\wt{v}_{b,\mu} j} + O(d_b^{-1/2} \Phi). \nonumber
\end{align}
Thus, it suffices to compute 
\begin{align}
- \E_{\mathscr{F}_0} \left( G_{\wt{v}_{b,\mu} j} - \wt{G}_{\wt{v}_{b,\mu} j} \right). \nonumber
\end{align}
By the resolvent identity, we have the following equation holding in expectation:
\begin{align}
\E_{\mathscr{F}_0} \left( - G_{\wt{v}_{b,\mu} j} - \wt{G}_{\wt{v}_{b,\mu} j} \right) \ &= \ \E_{\mathscr{F}_0} \left( \sum_{k, \ell} \ G_{\wt{v}_{b,\mu} k} (\wt{X} - X)_{k \ell} \wt{G}_{\ell j} \right).\label{eq:perturbgreensexpectation}
\end{align}
Unfolding the high-probability equation (11.15) in Lemma 11.9, we have, with probability at least $1 - O(d_w^{-1/2} \Phi)$ conditioned on $\mathscr{F}_0$, 
\begin{align}
\wt{X} - X \ = \ d_w^{-1/2} \left( \Delta_{v_b \wt{v}_{b,\mu}} - \Delta_{v_b v_{b, \mu}} + \Sigma_b \right) \ + \ d_w^{-1/2} \left( \Delta_{\wt{v}_{b,\mu} v_b} - \Delta_{v_{b,\mu} v_b} + \Sigma_b^* \right).\label{eq:perturbresampleexpectation}
\end{align}
Here, we recall that $v_{b,\mu}$ (resp. $\wt{v}_{b,\mu}$) is the vertex adjacent to $v_b$ in $S_{v_b, \mu}$ (resp. $\wt{S}_{v_b, \mu}$) \emph{after} resampling. Also, $\Sigma_b$ is the matrix given by a sum of terms $\pm \Delta_{xy}$ where one of the following two conditions holds:
\begin{itemize}
\item Conditioned on $\mathscr{G}_\mu, \wt{p}_\mu$, the random variable $x$ is approximately uniform, or;
\item Conditioned on $\mathscr{G}_{\mu}$, the random variable $y$ is approximately uniform.
\end{itemize}
Thus, upon unfolding the RHS of \eqref{eq:perturbgreensexpectation}, we see one term is given by, in expectation,
\begin{align}
\E_{\mathscr{F}_0} \left[ d_w^{-1/2} G_{\wt{v}_{b,\mu} \wt{v}_{b,\mu}} \wt{G}_{ij} \right] \ &= \ \E_{\mathscr{F}_0} \ \left[ d_w^{-1/2} \underline{s} \wt{G}_{ij} \right] + O(d_w^{-1/2} \Phi) \nonumber \\
&= \ \E_{\mathscr{F}_0} \ \left[ d_w^{-1/2} \underline{s} G_{ij} \right] + O(d_w^{-1/2} \Phi), \nonumber
\end{align}
where the first equality holds because $\wt{v}_{\beta,\mu}$ is approximately uniform by Corollary 11.6 and the second holds since for any fixed indices $i, j$, we have $G_{ij} \sim \wt{G}_{ij}$ conditioned on $\mathscr{G}_\mu$. In particular, we have
\begin{align}
\E_{\mathscr{F}_0} \left( G_{\wt{v}_{b,\mu} \wt{v}_{b, \mu}} \wt{G}_{ij} \right) \ &= \ \E_{\mathscr{F}_0} \left( G_{\wt{v}_{b,\mu} \wt{v}_{b,\mu}} G_{ij} \right) \ + \ \E_{\mathscr{F}_0} \left[ G_{\wt{v}_{b,\mu} \wt{v}_{b,\mu}} \left( \wt{G}_{ij} - G_{ij} \right) \right] \nonumber \\
&= \ \E_{\mathscr{F}_0} \left( G_{\wt{v}_{b,\mu} \wt{v}_{b,\mu}} G_{ij} \right) + O \left( \frac{1}{\sqrt{D}} \right), \nonumber
\end{align}
where the second equality holds by Lemma \ref{lemma:preliminaryestimategreensfunction}. Thus, it suffices to bound the remaining terms in \eqref{eq:perturbgreensexpectation}. By \eqref{eq:perturbresampleexpectation}, it suffices to estimate the expectation of terms $G_{\wt{v}_{b,\mu} x} \wt{G}_{yj}$. By the second result in Lemma \ref{lemma:preliminaryestimategreensfunction} and the Schwarz inequality, in the case where $y$ is approximately uniform conditioning on $\mathscr{G}_{\mu}$, we have, with high probability,
\begin{align}
\E_{\mathscr{F}_0} G_{\wt{v}_{b,\mu} x} \wt{G}_{yj} \ \leq \ \E_{\mathscr{F}_0} |\wt{G}_{yj}|^2 + O(D^{-1/2}) \ \leq \ O(\Phi). \nonumber
\end{align}
Thus, by the assumption $\Gamma = O(1)$, Lemma \ref{lemma:conditionalexpectationscederivation} follows after accumulating the finitely many events all holding with probability at least $1 - O(d^{-1/2} \Phi)$.
\end{proof}
%
%
%
\section{The Self-Consistent Equation and Proof of Theorem \ref{theorem:locallaw}}
\subsection{Derivation of the self-consistent equation}
We begin by introducing the following two pieces of notation for a random vector $Z = (Z_i)_{i \in [[1, M]]}$ and $\wt{Z} = (\wt{Z}_k)_{k \in [[M+1, M+N]]}$ :
\begin{align}
\E_{(i)} Z \ = \ \frac{1}{M} \sum_{i = 1}^M \ Z_i, \quad \E_{(k)} \wt{Z} \ = \ \frac{1}{N} \sum_{k = M+1}^{M+N} \ \wt{Z}_k.
\end{align}
We now record the following concentration estimate which will allow us to take expectations and exploit the estimates in Lemma {lemma:conditionalexpectationscederivation}. To do so, we introduce the following notation.
\begin{definition}
Suppose $X$ is an $L^1$-random variable, and suppose $\sigma(\cdot)$ is a $\sigma$-algebra which $X$ is measurable with respect to. We define the $\sigma(\cdot)$-\emph{fluctuation} of $X$ to be the following centered random variable:
\begin{align}
X_{\sigma(\cdot)} \ := \ X - \E_{\sigma(\cdot)} X.
\end{align}
\end{definition}
\begin{prop} \label{prop:concentrationestimate}
Suppose that $z = E + i \eta \in \mathbf{D}_{N, \delta, \xi} \cap U_{\e}$, that $\zeta > 0$, and that $\Gamma = O(1)$ with probability at least $1 - e^{-\zeta}$. Fix $k = O(1)$ and pairs of indices $I_k = \{ (i_1, j_1), \ldots, (i_k, j_k)\} $. Define the random variable
\begin{align}
X_{I_k}(z) \ = \ G_{i_1 j_1} \ldots G_{i_k j_k}. \nonumber
\end{align}
Then, for any $\xi = \xi(N)$ satisfying $\xi \to \infty$ as $N \to \infty$, we have the following pointwise concentration estimate:
\begin{align}
\mathbb{P} \left[ \left(X_{I_k}(z) \right)_{\mathscr{F}_0} \ = \ O( \xi \Phi) \right] \ \geq \ 1 - e^{- \left[ (\xi \log \xi) \wedge \zeta \right] + O(\log N)}.
\end{align}
\end{prop}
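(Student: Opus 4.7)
The plan is to bound high moments of $X_{I_k}(z) - \E X_{I_k}(z)$ using the switching dynamics together with the conditional-expectation estimates of Lemma \ref{lemma:conditionalexpectationscederivation}, and then invoke Markov's inequality. Since $X_{I_k}(z)$ is a function of the graph $E$ alone, the notation $(X_{I_k})_{\mathscr{F}_0}$ is interpreted as centering by $\E X_{I_k}(z)$ on the augmented probability space $\wt{\Omega}$.

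First I would symmetrize. Let $\wt{X}_{I_k}$ denote the same product of Green's function entries computed on an i.i.d.\ resampled graph $\wt{E}$ produced by the global switchings $T_b, T_w$. By Jensen's inequality, $\E|X_{I_k}-\E X_{I_k}|^{2p}\le\E|X_{I_k}-\wt{X}_{I_k}|^{2p}$, so it suffices to bound the right-hand side. Interpolating between $E$ and $\wt{E}$ by the sequence of local switchings produces a telescoping expression for $X_{I_k}-\wt{X}_{I_k}$; by Lemma \ref{lemma:switchperturbmatrix} each local switching acts as a rank-$O(1)$ perturbation $\sum\Delta_{xy}$ of the adjacency matrix, and the resolvent identity together with the a priori bound $\Gamma=O(1)$ and the perturbation estimate \eqref{eq:perturbresample} shows that each factor $G_{i_r j_r}$ is altered by at most $O(d_w^{-1/2})$ per local switching.

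Next, I would expand $\E|X_{I_k}-\wt{X}_{I_k}|^{2p}$ through the telescoped sum. Each resulting term is an expectation of a product of $\Delta$-perturbations sandwiched between Green's function factors; in particular, ``single-vertex'' entries $G_{v_{b,\mu}j}$ (resp.\ $G_{v_{w,\nu}j}$) appear. Lemma \ref{lemma:conditionalexpectationscederivation} then allows me to replace each such single-vertex entry by its index-averaged counterpart $\E_{(k)}G_{kj}$ (resp.\ $\E_{(i)}G_{ij}$), modulo a self-coupling term of the form $d_w^{-1/2}s_w G_{ij}$ or $d_w^{-1/2}s_b G_{kj}$ and an error of order $d_w^{-1/2}\Phi$. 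Iterating this substitution converts the expectation into one involving only index-averaged quantities, whose fluctuations are in turn controlled by the Ward identity \eqref{eq:ward} at scale $\Phi$.

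Combining these estimates is expected to yield $\E|X_{I_k}-\E X_{I_k}|^{2p}\le(Cp\Phi)^{2p}$, so Markov's inequality gives $\mathbb{P}(|X_{I_k}-\E X_{I_k}|\ge\xi\Phi)\le(Cp/\xi)^{2p}$; optimizing in $p$ and adding the failure probability $e^{-\zeta}$ of the event $\{\Gamma=O(1)\}$ produces the required tail $1-e^{-[(\xi\log\xi)\wedge\zeta]+O(\log N)}$, with the $\log N$ loss coming from a union bound over the polynomially many Green's function entries being controlled. The main obstacle is the combinatorial bookkeeping of the moment expansion: each application of Lemma \ref{lemma:conditionalexpectationscederivation} inside the $2p$-th moment produces cross terms involving new Green's function factors, and one must show that the proliferation of such terms does not spoil the moment bound. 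This requires a graphical organization of the resampling expansion analogous to Section 6 of \cite{BKY}, adapted to the bipartite setting so as to track both black and white contributions and to respect the structural distinction between the three ensembles $\mathscr{X}$, $\mathscr{X}_\ast$, and $\mathscr{X}_{\ast,+}$.
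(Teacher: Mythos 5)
The paper proves this proposition only by citation to Proposition 4.1 of \cite{BKY}, whose argument is a martingale concentration bound: one writes the $\mathscr{F}_0$-fluctuation as a telescoping sum of martingale increments $\E_{\mathscr{F}_\mu}X_{I_k} - \E_{\mathscr{F}_{\mu-1}}X_{I_k}$, bounds each increment almost surely by $O(d_w^{-1/2})$ and its $\mathscr{G}_\mu$-conditional second moment by $O(d_w^{-1}\Phi^2)$ using the switching-perturbation estimate \eqref{eq:perturbresample} and the Ward identity \eqref{eq:corollaryofperturbresample}, and then applies a martingale moment/concentration inequality from the appendix of \cite{BKY}. The martingale filtration is what tames the cross terms; no graphical expansion is needed.

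Your proposal heads in a similar moment-based direction but has two concrete problems. First, you reinterpret $(X_{I_k})_{\mathscr{F}_0}$ as $X_{I_k}-\E X_{I_k}$ on the grounds that $X_{I_k}$ is $\mathscr{F}_0$-measurable; but on the augmented space $\wt\Omega$ the Green's function entries are evaluated on the switched graph $T(E,\mathbf{S})$, so $X_{I_k}$ is \emph{not} $\sigma(E)$-measurable and the conditional centering is genuine. Changing it to unconditional centering alters the statement (and would not plug into the use made of it in the derivation of the self-consistent equation, which is to remove an $\E_{\mathscr{F}_0}$). Second, your invocation of Lemma~\ref{lemma:conditionalexpectationscederivation} is the wrong tool: that lemma supplies $\E_{\mathscr{F}_0}$-conditional expectation identities used \emph{downstream} (after concentration) to produce the self-consistent equation, and it assumes $\Gamma=O(1)$ with $t$-HP, a hypothesis one is entitled to only because of the concentration estimate being proven here. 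The correct inputs at this stage are the $\mathscr{G}_\mu$-conditional estimates of Lemma~\ref{lemma:preliminaryestimategreensfunction} and Corollary~\ref{corollary:takemaxperturbgreensfunction}. Finally, you acknowledge that the $2p$-th moment expansion "must not proliferate," but that is exactly the content that needs a proof; without replacing the ad hoc expansion by the martingale structure (Burkholder--Davis--Gundy or the bespoke inequality in \cite{BKY}), the proposal does not close, and the bound $(Cp\Phi)^{2p}$ followed by optimization in $p$ would in any case yield only a tail of the form $e^{-c\xi}$, which is weaker than the required $e^{-\xi\log\xi + O(\log N)}$.
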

The proof of Proposition \ref{prop:concentrationestimate} follows from the proof of Proposition 4.1 in \cite{BKY} so we omit it.

We now consider the matrix equation $HG = zG + \on{Id}$ and compute the diagonal entries of both sides. The $(i,i)$-entry of the RHS is clearly given by $zG_{ii} + 1$. We now study the LHS, considering the $(k,k)$-entry for $k \in [[M, M+1]]$. By matrix multiplication we have
\begin{align}
(HG)_{kk} \ = \ \sum_{i = 1}^M \ H_{ki} G_{ik} \ &= \ d_w^{-1/2} \sum_{i = 1}^M \ \sum_{\nu = 1}^{d_w} \left( \delta_{i, v_{b, \nu}} - \frac{1}{M} \right) G_{ik} \\
&= \ d_w^{-1/2} \sum_{\nu = 1}^{d_w} \ \left( G_{v_{b, \nu} k} - \E_{(i)} G_{ik} \right), \label{eq:laststepsce}
\end{align}
where we used the relation $Md_b = Nd_w$. Appealing to Lemma \ref{lemma:conditionalexpectationscederivation} we deduce the following identity:
\begin{align}
\E_{\mathscr{F}_0} (HG)_{kk} \ = \ - \E_{\mathscr{F}_0} \left( \sum_{\nu = 1}^{d_w} \ \left[ \frac{1}{d_w} \ s_b G_{ii} + O(d_w^{-1} \Phi) \right] \right) \ = \ - \E_{\mathscr{F}_0} s_b G_{ii} + O(\Phi).
\end{align}
Taking an expectation conditioning on $\mathscr{F}_0$ in the matrix equation $HG = zG + \on{Id}$, we see
\begin{align}
1 + z \E_{\mathscr{F}_0} G_{kk} \ = \ - \E_{\mathscr{F}_0} s_b G_{kk} + O \left( \Phi \right).
\end{align}
Using Proposition \ref{prop:concentrationestimate} to account for the $\mathscr{F}_0$-fluctuation of the Green's function terms, we ultimately deduce a stability equation for the diagonal $(k,k)$-entries of $G$, with $k > M$. We may run a similar calculation for indices $i \in [[1, M]]$ and derive the following system of equations:
\begin{align}
1 + (z + \gamma s_w) G_{ii} \ &= \ O \left( (1 + |z|) \xi \Phi \right), \\
1 + (z + s_b) G_{kk} \ &= \ O \left((1 + |z|) \xi \Phi \right).
\end{align}
Although this system is a priori coupled, we now appeal to Lemma \ref{lemma:GFrelations} to decouple the equations. More precisely, we deduce the following system of \emph{decoupled} equations:
\begin{align}
1 + \left( z + s_b + \frac{1 - \gamma}{z} \right) G_{ii} \ &= \ O \left( (1 + |z|) \xi \Phi \right), \\
1 + \left( z + \gamma s_w + \frac{\gamma - 1}{z} \right) G_{kk} \ &= \ O \left( (1 + |z|) \xi \Phi \right).
\end{align}
From here, we may proceed in two fashions. First, we may use Lemma 7.3 to deduce stability equations for the Green's functions $G_{\ast}$ and $G_{\ast,+}$, relating the diagonal entries of these Green's functions to the Stieltjes transforms $s_{\ast}$ and $s_{\ast,+}$. On the other hand, we may also average over the diagonal entries and deduce self-consistent equations for the Stieltjes transforms $s_b, s_w$ and $s_{\ast}, s_{\ast,+}$. We summarize these estimates in the following proposition.
\begin{prop} \label{prop:sces}
Suppose $\Gamma = O(1)$ with $t$-HP, and let $z = E + i \eta \in U_{\e}$ satisfy $\eta \gg N^{-1}$. Then for any $i \in [[1, M]]$ and $k \in [[M+1, M+N]]$, we have the following equations uniformly over such $z$ with $t$-HP:
\begin{align}
1 + \left( z + s_b + \frac{1 - \gamma}{z} \right) G_{ii} \ &= \ O((1 + |z|) \xi \Phi),  \label{eq:individualsceupper} \\
1 + \left( z + \gamma s_w + \frac{\gamma - 1}{z} \right) G_{kk} \ &= \ O((1 + |z|) \xi \Phi), \label{eq:individualscelower} \\
1 + \left( z + 1 - \gamma + z s_{\ast,+} \right) [G_{\ast,+}]_{ii} \ &= \ O((1 + |z|^{1/2}) \xi \Phi) \ = \ O((1 + |z|) \xi \Phi), \label{eq:individualscelargecov}  \\
1 + \left( z + \gamma - 1 + \gamma z s_{\ast} \right) [G_{\ast}]_{kk} \ &= \  O((1 + |z|^{1/2}) \xi \Phi) \ = \ O((1 + |z|) \xi \Phi). \label{eq:individualscesmallcov}
\end{align}
Moreover, we have the following averaged equations uniformly over such $z$ with $t$-HP:
\begin{align}
1 + \left( z + s_b + \frac{1 - \gamma}{z} \right) s_b \ &= \ O((1 + |z|) \xi \Phi), \label{eq:averagedsceupper} \\
1 + \left( z + \gamma s_w + \frac{\gamma - 1}{z} \right) s_w \ &= \ O((1 + |z|) \xi \Phi), \label{eq:averagedscelower} \\
1 + \left( z + 1 - \gamma + z s_{\ast,+} \right) s_{\ast,+} \ &= \ O((1 + |z|^{1/2} \xi \Phi) \ = \ O((1 + |z|)\xi \Phi), \label{eq:averagedscelargecov} \\
1 + \left( z + \gamma - 1 + \gamma z s_{\ast} \right) s_{\ast} \ &= \ O((1 + |z|^{1/2}) \xi \Phi) \ = \ O((1 + |z|) \xi \Phi). \label{eq:averagedscesmallcov}
\end{align}
\end{prop}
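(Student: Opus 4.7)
My plan is to follow the derivation sketched in the paragraphs preceding the proposition and then extend it in two ways: first, apply the spectral correspondence of Lemma \ref{lemma:GFrelations} to convert equations for the linearization $G$ into equations for the covariance matrices $G_{\ast}$ and $G_{\ast,+}$; second, average over diagonal entries to produce the Stieltjes-transform equations. Throughout, the hypothesis $\Gamma = O(1)$ with $t$-HP underpins every application of Proposition \ref{prop:concentrationestimate}.

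For the individual equations \eqref{eq:individualsceupper} and \eqref{eq:individualscelower} I start from $HG = zG + \Id$ and compute $(HG)_{kk}$ for $k > M$ using the biregular structure of $H$ as in the text. Lemma \ref{lemma:conditionalexpectationscederivation} yields the conditional identity $\E_{\mathscr{F}_0}(HG)_{kk} = -\E_{\mathscr{F}_0}(s_b G_{kk}) + O(\Phi)$; matching with $\E_{\mathscr{F}_0}(1 + zG_{kk})$ and absorbing the $\mathscr{F}_0$-fluctuations via Proposition \ref{prop:concentrationestimate} (applied both to $G_{kk}$ and to each product $G_{ii} G_{kk}$, whose average in $i$ equals $s_b G_{kk}$) gives the coupled equation $1 + (z + s_b) G_{kk} = O((1+|z|)\xi\Phi)$ with $t$-HP. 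Swapping the roles of black and white produces $1 + (z + \gamma s_w) G_{ii} = O((1+|z|)\xi\Phi)$. The decoupling is then the algebraic identity $\gamma s_w(z) = s_b(z) + (1-\gamma)/z$, which follows from Lemma \ref{lemma:GFrelations} together with the definition $m_{\infty,+} = \gamma m_{\infty} + (\gamma-1)/z$ transferred to the Stieltjes transforms of the graph itself; substituting this identity into each coupled coefficient gives precisely the form stated in \eqref{eq:individualsceupper}--\eqref{eq:individualscelower}.

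The covariance-matrix equations \eqref{eq:individualscelargecov} and \eqref{eq:individualscesmallcov} are obtained by substituting $z \mapsto \sqrt{z}$ into \eqref{eq:individualsceupper} and \eqref{eq:individualscelower} and invoking Lemma \ref{lemma:GFrelations}: for $i \leq M$, $G_{ii}(\sqrt{z}) = \sqrt{z}\,[G_{\ast,+}(z)]_{ii}$ and $s_b(\sqrt{z}) = \sqrt{z}\, s_{\ast,+}(z)$, so that the $\sqrt{z}$ factors collapse out of the coefficient to produce $z + z s_{\ast,+} + (1-\gamma)$. The case of $[G_{\ast}]_{kk}$ is completely analogous. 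The averaged equations \eqref{eq:averagedsceupper}--\eqref{eq:averagedscesmallcov} follow by applying $\E_{(i)}$ or $\E_{(k)}$ to the corresponding individual equation; because the coefficient of the diagonal entry is independent of the summation index, the average simply replaces $G_{ii}$ (resp.\ $G_{kk}$, $[G_{\ast,+}]_{ii}$, $[G_{\ast}]_{kk}$) by $s_b$ (resp.\ $s_w$, $s_{\ast,+}$, $s_{\ast}$) with no additional error.

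The principal technical obstacle is tracking the $z$-dependence of the error under the substitution $z \mapsto \sqrt{z}$. The control parameter $\Phi$ depends on $z$ only through $\Im z$, and the prefactor $(1 + |z|^{1/2})$ appearing from the substitution must be reconciled with the stated $O((1+|z|)\xi\Phi)$ bound; this is managed by the domain restriction $z \in \mathbf{D}_{N,\delta,\xi}$ and the elementary estimate $1 + |z|^{1/2} \leq 1 + |z|$ on this domain. A secondary subtlety is that Proposition \ref{prop:concentrationestimate} is stated for Green's functions of the linearization, so the concentration required for products of covariance Green's function entries is deduced from the linearization version through Lemma \ref{lemma:GFrelations} rather than proved from scratch.
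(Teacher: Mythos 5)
Your reconstruction of the derivation preceding the proposition is essentially the paper's: start from $(X-z)G = \Id$, compute the $(k,k)$ diagonal entry, apply Lemma~\ref{lemma:conditionalexpectationscederivation} to reach $1 + z\,\E_{\mathscr{F}_0} G_{kk} = -\E_{\mathscr{F}_0}(s_b G_{kk}) + O(\Phi)$, absorb the $\mathscr{F}_0$-fluctuations via Proposition~\ref{prop:concentrationestimate}, swap colors for the $G_{ii}$ equation, decouple via the exact identity $s_b = \gamma s_w + (\gamma-1)/z$, convert to the covariance ensembles through $z\mapsto\sqrt{z}$ and Lemma~\ref{lemma:GFrelations}, and average. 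Two small remarks on the write-up: the decoupling identity is an exact algebraic fact about the finite matrices, following from Lemma~\ref{lemma:GFrelations} together with the spectral correspondence in Proposition~\ref{prop:spectralcorrespondence} (Part~(II)) rather than from the definition of $m_{\infty,+}$ as such --- your phrase ``transferred to the Stieltjes transforms of the graph itself'' gestures at the right mechanism, but the citation should be to Proposition~\ref{prop:spectralcorrespondence}. And the inequality $1 + |z|^{1/2} \leq 1 + |z|$ is false for $|z| < 1$; what is true, and what is needed, is $1 + |z|^{1/2} = O(1 + |z|)$ uniformly.

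The genuine gap is that you stop at a pointwise-in-$z$ statement, whereas the proposition asserts the equations hold \emph{uniformly} over $z$ with $t$-HP, and this upgrade is in fact the only content of the paper's written proof. Proposition~\ref{prop:concentrationestimate} delivers a high-probability bound at each fixed $z$; a union bound over the uncountably many $z$ in the domain fails directly. The standard remedy --- which the paper invokes and you should supply --- is to cover the domain by an $N^{-C}$-net, apply the pointwise estimate together with a union bound over the polynomially-many lattice points (absorbed into the $e^{O(\log N)}$ slack in the definition of $t$-HP), and then extend to all $z$ using the deterministic $\eta^{-2}$-Lipschitz continuity of $G_{ij}$ from Corollary~\ref{corollary:ward} and the scaling control from Lemma~\ref{lemma:GFLipschitzscaling}, noting that the a priori bound $\Gamma = O(1)$ lets the Lipschitz increment be controlled by the lattice spacing. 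Without this step the proposition as stated is not proved.
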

\begin{proof}
It remains to upgrade the self-consistent equations \eqref{eq:individualsceupper} -- \eqref{eq:averagedscesmallcov} to hold over all such $z = E + i \eta$ with $t$-HP. To this end, we appeal to the Lipschitz continuity of the Green's function entries on a sufficiently dense lattice as in the proof of Lemma 8.5.
\end{proof}
\subsection{Analysis of the self-consistent equation}
From a direct calculation, we note the Stieltjes transform $m_{\infty}$ of the Marchenko-Pastur law with parameter $\gamma \leq 1$ is given by the following self-consistent equation:
\begin{align}
\gamma z m_{\infty}^2(z) + \left( \gamma + z - 1 \right) m_{\infty}(z) + 1 \ = \ 0. \label{eq:sceMP}
\end{align}
For the augmented Stieltjes transform $m_{\infty,+}$, we may similarly deduce a self-consistent equation. In our analysis, we will be concerned with providing full details for the Stieltjes transform $m_{\infty}$ \emph{only}, as the estimate for the augmented transform $m_{\infty,+}$ will follow from the estimate on $m_{\infty}$. We now note Proposition \ref{prop:sces} implies the Stieltjes transform $s_{\ast}$ solves the same self-consistent equation with an error of $o(1)$ throughout the domain $\mathbf{D}_{N, \delta, \xi} \cap U_{\e}$, with $t$-HP. Our goal will be to use the stability of the self-consistent equation \eqref{eq:sceMP} under $o(1)$ perturbations to compare $s_{\ast}$ and $m_{\infty}$. This is the content of the following result.
\begin{prop} \label{prop:stabilityestimateaverage}
Let $m: \C_+ \to \C_+$ be the unique solution to the following equation:
\begin{align}
\gamma zm^2 + (\gamma + z - 1)m + 1 \ = \ 0.
\end{align}
Suppose $s: \C_+ \to \C_+$ is continuous and let 
\begin{align}
R := \gamma zs^2 + (\gamma + z - 1)s + 1.
\end{align}
Fix an energy $E \in \R \setminus [-\varepsilon, \varepsilon]$ for $\e > 0$ small and scales $\eta_0 < C(E)$ and $\eta_{\infty} \leqslant N$, where $C(E) = O_E(1)$ is a constant to be determined. Suppose we have 
\begin{align}
|R(E + i \eta)| \ \leqslant \ (1 + |z|) r(E + i \eta)
\end{align}
for a nonincreasing function $r: [\eta_0, \eta_{\infty}] \to [0,1]$. Then for all $z = E + i \eta$ for $\eta \in [\eta_0, \eta_{\infty}]$, we have the following estimate for sufficiently large $N$:
\begin{align}
| m - s | \ = \ O(F(r)). \label{eq:stabilityestimateaverage}
\end{align}
Here, the constant $C(E)$ is determined by
\begin{align}
\on{Im} \left( \frac{1 - \gamma}{E + i \eta} \right) \ > \ 3 \alpha^{1/2} \varepsilon^{-1/2}
\end{align}
for all $\eta \leqslant C(E)$.
\end{prop}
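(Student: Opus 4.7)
The plan is to subtract the two defining relations. Writing $\delta := s - m$ and $P(w) := \gamma z w^2 + (\gamma + z - 1)w + 1$, the hypotheses read $P(m) = 0$ and $P(s) = R$, so a direct expansion gives the quadratic identity
\[ \gamma z \, \delta^2 \ + \ P'(m) \, \delta \ = \ R, \]
where $P'(m) = 2\gamma z m + (\gamma + z - 1)$. Squaring and using $P(m) = 0$ to eliminate the linear dependence on $m$ shows $P'(m)^2 = (\gamma + z - 1)^2 - 4 \gamma z = (z - \lambda_+)(z - \lambda_-)$, which is precisely the square root appearing in the first term of $F$. The correct branch of this square root is fixed by the requirement that $m \in \C_+$.

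I would then solve the quadratic explicitly:
\[ \delta \ = \ \frac{-P'(m) \ \pm \ \sqrt{P'(m)^2 + 4 \gamma z R}}{2 \gamma z}. \]
This produces two candidate branches. On the \emph{small} branch, linearization in $R$ yields $\delta \approx R/P'(m)$, so $|\delta| \lesssim (1 + |z|) r / \sqrt{|(\lambda_+ - z)(z - \lambda_-)|}$, which recovers the first alternative in the minimum defining $F(r)$ (after noting $1 + |z| = O(1)$ on the relevant part of the domain and absorbing the constant). When instead $|\gamma z R| \gtrsim |P'(m)|^2$, the quadratic term dominates and one obtains $|\delta| \lesssim \sqrt{|R|/|\gamma z|} \lesssim \sqrt{r}$, matching the second alternative in $F$. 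Combining the two regimes then gives $|\delta| = O(F(r))$, \emph{contingent} on $\delta$ always lying on the small branch.

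The delicate step, and the main obstacle, is this branch selection, which I would handle by a continuity/bootstrap argument in $\eta$, starting from $\eta = \eta_{\infty}$ and decreasing to $\eta = \eta_0$. At the initial scale $\eta_{\infty}$ both $m(E + i \eta_{\infty})$ and $s(E + i \eta_{\infty})$ are close to $-1/(E + i \eta_{\infty})$ (each being the Stieltjes transform of a probability measure evaluated at large imaginary part), so $|\delta(E + i \eta_{\infty})|$ is small and uniquely pinned to the small branch. Continuity of $\delta$ in $\eta$ then transports this selection downward, provided the two branches remain quantitatively separated, which corresponds to a uniform lower bound on $|P'(m)|/|\gamma z|$, i.e., the distance $|\delta_+ - \delta_-|$ between the two roots. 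The role of the threshold scale $C(E)$, formulated through the bound on $\on{Im}((1-\gamma)/(E + i\eta))$, is to guarantee a lower bound on $\on{Im} m$ (and hence on $|P'(m)|$) uniform in $\eta \leq C(E)$; this is precisely where the repulsion assumption $|E| > \e$ enters. The remaining technical bookkeeping is to verify that the linear regime bound $r/|P'(m)|$ and the quadratic regime bound $\sqrt{r}$ agree up to constants across their common boundary $|R| \sim |P'(m)|^2$, so that the branch identification transfers continuously through the crossover window and yields the claimed estimate uniformly in $\eta \in [\eta_0, \eta_\infty]$.
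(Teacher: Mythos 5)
Your algebraic backbone — subtracting $P(s)=R$ from $P(m)=0$ to get the quadratic $\gamma z\,\delta^2 + P'(m)\delta = R$, observing $P'(m)^2 = (\gamma+z-1)^2 - 4\gamma z$, and splitting into the linear regime ($|\delta|\lesssim r/|P'(m)|$) and the dominant-quadratic regime ($|\delta|\lesssim\sqrt{r}$) — matches the paper's structure exactly; the paper phrases it through $v_\pm := |m_\pm - s|$ and the min-inequality for complex square roots (its Lemma on $|v_+|\wedge|v_-|$), which is the same computation.

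The gap is in the branch-selection anchor. You propose to initialize the continuity/bootstrap argument at $\eta=\eta_\infty$, arguing that both $m(E+i\eta_\infty)$ and $s(E+i\eta_\infty)$ are close to $-1/z$ because each is the Stieltjes transform of a probability measure. But the Proposition only assumes $s:\C_+\to\C_+$ is continuous; no Stieltjes-transform structure (hence no $-1/z$ asymptotics) is hypothesized. More importantly, even granting that $s$ is a Stieltjes transform, the constraint $\operatorname{Im} s > 0$ cannot distinguish the two branches at large $\eta$: as $\eta\to\infty$ the "wrong" root $m_-$ tends to the negative real axis, so $\operatorname{Im} m_-\to 0^-$, and the bound $|s-m_-|\geq \operatorname{Im} s - \operatorname{Im} m_-$ yields no quantitative separation. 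The paper instead anchors at the \emph{small} scale $\eta = C(E)$, and the constant $C(E)$ is defined precisely so that $\operatorname{Im}\bigl((1-\gamma)/(E+i\eta)\bigr) > 3\alpha^{1/2}\varepsilon^{-1/2}$ for $\eta\leq C(E)$, which forces $\operatorname{Im} m_-(E+iC(E))$ to be a large negative constant and thus $|s-m_-| > 3\alpha^{1/2}\varepsilon^{-1/2} \geq |v_+|\wedge|v_-|$ there — pinning the branch without any asymptotic expansion of $s$. (Note this also corrects your reading of $C(E)$: it is not there to lower-bound $\operatorname{Im} m_+$ or $|P'(m)|$, but to push $\operatorname{Im} m_-$ far below zero.) With the anchor moved to $C(E)$, the rest of your continuity argument and the crossover bookkeeping between the two regimes goes through as in the paper.
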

Before we proceed with the proof of Proposition \ref{prop:stabilityestimateaverage}, we introduce the following notation. 
\begin{notation}
We denote the solutions to the equation \eqref{eq:sceMP} by $m_{\pm}$, where $m_+$ maps the upper-half plane to itself, and $m_-$ maps the upper-half plane to the lower-half plane.

Moreover, we define the following error functions:
\begin{align}
v_{\pm} \ = \ \left| m_{\pm} - s \right|.
\end{align}
\end{notation}
Having established this notation, because $m_{+}$ takes values in the upper-half plane, we deduce the following upper bound on the values taken by the imaginary part of $m_{-}$ as follows:
\begin{align}
\on{Im}(m_-(z)) \ \leqslant \ - \on{Im} \left( \frac{1 - \gamma}{E + i \eta} \right) \ < \ - 3 \alpha^{1/2} \varepsilon^{-1/2}
\end{align}
for scales $\eta < C(E)$. We now proceed to derive an a priori estimate on the error functions $v_{\pm}$. 
\begin{lemma} \label{lemma:easyregimemin}
Under the assumptions and setting of Proposition \ref{prop:stabilityestimateaverage}, we have
\begin{align}
|v_+| \wedge |v_-| \ \leqslant \ 3 \alpha^{1/2} \varepsilon^{-1/2} F(r).
\end{align}
\end{lemma}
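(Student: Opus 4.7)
The plan is to exploit the factorization of the defining quadratic. Since $m_\pm$ are the two roots of $\gamma z X^2 + (\gamma + z - 1)X + 1 = 0$, the polynomial factors as $\gamma z(X - m_+)(X - m_-)$. Substituting $X = s$ yields the identity $R = \gamma z(s - m_+)(s - m_-)$, hence
\begin{align*}
|R| \ = \ |\gamma z| \, v_+ v_-.
\end{align*}
Combined with the hypothesis $|R| \leq (1+|z|)r$ and the lower bound $|\gamma z| = |z|/\alpha \geq \varepsilon/\alpha$ that follows from $|E| > \varepsilon$, this produces the product estimate
\begin{align*}
v_+ v_- \ \leq \ \frac{\alpha(1+|z|)r}{|z|}.
\end{align*}

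From this single estimate I recover both branches of $F(r)$. For the $\sqrt{r}$-branch, the trivial inequality $\min(v_+,v_-)^2 \leq v_+ v_-$ gives $\min(v_+,v_-) \leq \sqrt{\alpha(1+|z|)r/|z|}$, and absorbing $(1 + 1/|E|) \leq 2/\varepsilon$ (valid once $\varepsilon \leq 1$) yields $\min(v_+,v_-) \leq \sqrt{2}\,\alpha^{1/2}\varepsilon^{-1/2}\sqrt{r} \leq 3\alpha^{1/2}\varepsilon^{-1/2}\sqrt{r}$. For the other branch, I compute the root gap via Vieta: from $m_+ m_- = 1/(\gamma z)$ and $m_+ + m_- = -(\gamma + z - 1)/(\gamma z)$, we have $(m_+ - m_-)^2 = [(\gamma + z - 1)^2 - 4\gamma z]/(\gamma z)^2$, and unfolding $\lambda_\pm = (1 \pm \sqrt{\gamma})^2$ simplifies the numerator to $(z - \lambda_+)(z - \lambda_-)$. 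The triangle inequality $\max(v_+,v_-) \geq \tfrac{1}{2}|m_+ - m_-|$ then gives
\begin{align*}
\min(v_+,v_-) \ \leq \ \frac{2 v_+ v_-}{|m_+ - m_-|} \ \leq \ \frac{2(1+|z|)\,r}{|\sqrt{(z-\lambda_+)(z-\lambda_-)}|},
\end{align*}
which I bound by $3\alpha^{1/2}\varepsilon^{-1/2}\bigl(1 + 1/\sqrt{(\lambda_+-z)(z-\lambda_-)}\bigr) r$. Taking the minimum of the two estimates produces the claim $\min(v_+,v_-) \leq 3\alpha^{1/2}\varepsilon^{-1/2} F(r)$.

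The hard part is purely bookkeeping the constant $3\alpha^{1/2}\varepsilon^{-1/2}$ uniformly in $|z| \in [\varepsilon, \eta_\infty + |E|]$. For large $|z|$ the growth factor $(1+|z|)$ is cancelled by $|\sqrt{(z-\lambda_+)(z-\lambda_-)}| \sim |z|$, leaving an $O(1)$ quantity absorbed into $3\alpha^{1/2}\varepsilon^{-1/2}$ once $\varepsilon$ is small. For $|z|$ in a bounded neighborhood of the spectrum, $\lambda_+ \leq 4$ ensures $2(1+|z|)$ is bounded by an absolute constant, and hence by $3\alpha^{1/2}\varepsilon^{-1/2}$ for $\varepsilon$ sufficiently small relative to $\alpha$. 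Since $|m_+ - m_-|$ is a modulus, only $|\sqrt{(z-\lambda_+)(z-\lambda_-)}|$ enters, side-stepping any branch-cut issues with the complex square root in the denominator of $F(r)$. The structural identity $|R| = |\gamma z| v_+ v_-$ does essentially all the substantive work.
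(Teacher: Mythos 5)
Your argument is correct and is essentially the paper's proof: the factorization $R = \gamma z(s - m_+)(s - m_-)$ is equivalent to the paper's inequality $|\sqrt{w+\zeta}-\sqrt{w}| \wedge |\sqrt{w+\zeta}+\sqrt{w}| \leqslant (|\zeta|/\sqrt{|w|}) \wedge \sqrt{|\zeta|}$ applied with $w = (\gamma + z - 1)^2 - 4\gamma z$ and $\zeta = 4\gamma z R$, and both routes produce the same intermediate bound $\min(v_+,v_-) \leqslant \bigl(2|R|/\sqrt{|(z-\lambda_+)(z-\lambda_-)|}\bigr) \wedge \sqrt{|R|/|\gamma z|}$. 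Your Vieta presentation is slightly cleaner and side-steps the square-root branch bookkeeping, but the subsequent estimates and absorption of constants into $3\alpha^{1/2}\varepsilon^{-1/2}$ match the paper's.
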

\begin{proof}
We appeal to the following inequality which holds for any branch of the complex square root $\sqrt{\cdot}$ and any complex parameters $w, \zeta$ for which the square root is defined:
\begin{align}
| \sqrt{w + \zeta} - \sqrt{w} | \wedge | \sqrt{w + \zeta} + \sqrt{w}| \ &\leqslant \ \frac{|\zeta|}{\sqrt{|w|}} \wedge \sqrt{|\zeta|}.
\end{align}
In particular, this implies the following string of inequalities:
\begin{align}
|v_+| \wedge |v_-| \ &\leqslant \ \frac{1}{2\gamma z} \left( \frac{|4\gamma zR|}{\sqrt{|(\gamma + z - 1)^2 - 4\gamma z|}} \wedge \sqrt{|4 \gamma z R|}\right) \\
&\leqslant \ \frac{2|R|}{\sqrt{|(\gamma + z - 1)^2 - 4 \gamma z|}} \wedge \sqrt{\varepsilon \alpha R} \\
&\leqslant \ \frac{2 \varepsilon^{1/2}(1 + |z|)r(E + i \eta)}{\sqrt{|(\gamma  + z - 1)^2 - 4\gamma z|}} \wedge \sqrt{\varepsilon \alpha  (1 + |z|) r(E + i \eta)}
\end{align}
where the second inequality follows from the assumption $|z| \geqslant |E| \geqslant \varepsilon$ and the last bound follows if we choose $\varepsilon \leqslant 1$. But this is bounded by $3 \alpha^{1/2} \varepsilon^{-1/2} F(r)$ for any $r \in [0,1]$. 
\end{proof}
\begin{proof}
(of Proposition \ref{prop:stabilityestimateaverage}).

We consider two different regimes. First consider the regime where $|m_+ - m_-| > (1 + |z|) r(\eta)$. Precisely, this is the regime defined by $\eta > C(E)$ and the energy-dependent constant $D(E)$ such that
\begin{align}
(1 + |z|) r(\eta) \ < \ \frac{|(\gamma + z - 1)^2 - 4 \gamma z|}{D(E)};
\end{align}
the constant $D(E)$ will be determined later. We note by Lemma \ref{lemma:easyregimemin}, in this regime it suffices to prove the following bound:
\begin{align}
|v_-| \ > \ |v_-| \wedge |v_+|.
\end{align}
We now choose an energy-dependent constant $\kappa(E)$ such that for all $\eta \in [C(E), \eta_{\infty}]$, we have the bound
\begin{align}
\alpha^{1/2} (1+|z|)r(\eta) \ \leqslant \ \kappa(E) (1 + |E + i C(E)|).
\end{align}
Moreover, note $|(\gamma + z - 1)^2 - 4\gamma z|$ is increasing in $\eta$, as seen by translating $z = w + 1 - \gamma$ and computing
\begin{align}
|(\gamma + z - 1)^2 - 4\gamma z| \ = \ |w^2 - 4\gamma w + X| \ = \ |w(w-4 \gamma) + X|, \nonumber
\end{align}
where $X \in \R$. Because $r(\eta)$ is non-increasing in $\eta$, for all $z = E  + i \eta$ with $\eta \in [C(E), \eta_{\infty}]$, we have
\begin{align}
(1+|z|)r(\eta) \ < \ \frac{\kappa(E)}{D(E)} |(\gamma + z - 1)^2 - 4\gamma z|.
\end{align}
We first compute a uniform lower bound on the difference term as follows:
\begin{align}
|v_+ - v_-| \ &= \ \frac{|(\gamma + z - 1)^2 - 4 \gamma z|}{2 \gamma |z|} \nonumber \\
&\geqslant \ \frac{1}{2 \gamma |E + i \eta_{\infty}|} \left( \frac{D(E)}{\kappa(E)} \frac{(1 + |z|) r(\eta)}{\sqrt{|(\gamma + z - 1)^2 - 4\gamma z|}} \ \wedge \ \sqrt{\frac{D(E)}{\kappa(E)}} \sqrt{\alpha (1 + |z|) r(\eta) }\right) \nonumber \\
&> \ 0 \nonumber.
\end{align}
By continuity of $s$ and the estimate Lemma \ref{lemma:easyregimemin}, choosing $D(E)$ large enough as a function of $\kappa(E), E, \eta_{\infty}, \varepsilon$, it suffices to prove the estimate \eqref{eq:stabilityestimateaverage} for some $\eta \in [C(E), \eta_{\infty}]$. But this follows from Lemma \ref{lemma:easyregimemin}; in particular, we have at $\eta = C(E)$ and $N$ sufficiently large,
\begin{align}
|v_-| \ \geqslant \ | \on{Im}(s) - \on{Im}(m_-)| \ \geqslant |\on{Im}(m_-)| \ > \ 3 \varepsilon^{-1/2} \ \geqslant \ 3 \varepsilon^{-1/2} F(r) \ \geqslant \ |v_+| \wedge |v_-|. \nonumber
\end{align}
Thus, we have $|v_+| = |v_+| \wedge |v_-|$ in this first regime, implying the stability estimate \eqref{eq:stabilityestimateaverage}.

Now, we take the regime where the a priori estimate
\begin{align}
|(\gamma + z - 1)^2 - 4\gamma z| \ = \ O((1 + |z|)r(\eta))
\end{align}
holds. Thus, we know
\begin{align}
|v_-| \ &\leqslant \ |v_+| + \frac{\sqrt{|(\gamma + z - 1)^2 - 4 \gamma z|}}{2 \varepsilon} \ = \ |v_+| + O \left( \frac{(1+|z|)r(\eta)}{\sqrt{|(\gamma + z - 1)^2 - 4 \gamma z|}} \wedge \sqrt{(1 + |z|) r} \right) \nonumber \\
&= \ |v_+| + O(F(r)), \nonumber
\end{align}
implying the estimate in the second regime as well.
\end{proof}
We conclude the discussion of the self-consistent equation by noting that Lemma \ref{lemma:GFrelations} allows us to deduce the following local law for the Stieltjes transform $s_{\ast,+}$:
\begin{align}
\left| s_{\ast,+} - m_{\infty,+} \right| \ = \ O(F(r)).
\end{align}
This estimate holds in the regime $z = E + i \eta$ for $\eta \in [\eta_0, \eta_{\infty}]$.

\subsection{Final estimates}
Fix an index $k \in [[1, N]]$, and for notational convenience, \emph{for this calculation only}, we let $G$ denote the Green's function $G_{\ast}$. Consider the following approximate stability equation for the diagonal entry $G_{kk}$:
\begin{align}
1 + (z + \gamma - 1 + \gamma z s_{\ast}) G_{kk} \ = \ O \left( (1 + |z|) \xi \Phi \right).
\end{align}
We now appeal to the following estimate which will allow us to study the stability of this equation upon the replacement $s_{\ast} \to m_{\infty}$ that holds with $t$-HP:
\begin{align}
|z G_{kk}| \ = \ O(1).
\end{align}
Indeed, if $\eta \gg 1$, we have
\begin{align}
\left| z G_{kk} \right| \ \leq \ \left| \frac{E + i \eta}{\eta} \right| \ = \ O(1).
\end{align}
If $E \gg 1$, we appeal to the spectral representation of $G_{kk}$ and deduce
\begin{align}
\left| zG_{kk} \right| \ &\leq \ C \left| \frac{1}{N} \sum_{\lambda} \ \frac{|\mathbf{u}_{\lambda}(k)|^2 \times E}{E - \lambda + i \eta} \right| \\
&\leq \ \frac{C}{N} \sum_{\lambda} \ \left| \frac{E}{E - \lambda + i \eta} \right| \\
&= \ O(1),
\end{align}
where we used the uniform a priori bound $\lambda = O(1)$. If $\eta, E \lesssim 1$, then we appeal to the a priori bound $\Gamma = O(1)$ with $t$-HP and the trivial bound $z = O(1)$. Thus, by Proposition \ref{prop:stabilityestimateaverage}, we have
\begin{align}
1 + (z + \gamma - 1 + \gamma z m_{\infty}) G_{kk} \ = \ O \left((1 + |z|) \xi \Phi \right) + O(F(\xi \Phi)).
\end{align}
On the other hand, the self-consistent equation \eqref{eq:sceMP} implies the Green's function term on the LHS may be written as $-G_{kk}/m_{\infty}$. Moreover, because $m_{\infty} = O(1)$ uniformly on the domain $U_{\e}$, we establish the following estimate that holds over all $z \in \mathbf{D}_{N, \delta, \xi} \cap U_{\e}$ with $t$-HP:
\begin{align}
m_{\infty} - G_{kk} \ = \ O \left( F(\xi \Phi) \right),
\end{align}
where we use the estimate $(1 + |z|) m_{\infty} = O(1)$. This completes the proof of the local law along the diagonal of $G = G_{\ast}$.

To derive the estimate for the off-diagonal entries, we appeal to the Green's function $G(z) = (X - z)^{-1}$ of the linearization $X$. Note this is \emph{no longer} the Green's function $G_{\ast}$ of the covariance matrix $X_{\ast}$. In particular, we appeal to the following entry-wise representation of a matrix equation (for indices $i, j > M$):
\begin{align}
G_{ij}(HG)_{ii} - G_{ii} (HG)_{ij} \ = \ G_{ij}.
\end{align}
As in the derivation of the stability equations in Proposition \ref{prop:sces}, by Lemma \ref{lemma:conditionalexpectationscederivation} the expectation of the LHS is given by
\begin{align}
\E_{\mathcal{F}_0} \left[ G_{ij}d_w^{-1/2} \underline{s} G_{ii} \right] \ - \ \E_{\mathcal{F}_0} \left[ G_{ii} d_w^{-1/2} \underline{s} G_{ij} \right] \ + \ O(\Phi) \ = \ O(\Phi). \nonumber
\end{align}
Thus, at the cost of a concentration estimate in Proposition \ref{prop:concentrationestimate}, we deduce 
\begin{align}
|G_{ij}| \ = \ O(\xi \Phi), 
\end{align}
which yields the estimate for the off-diagonal entries. By Lemma 7.3, this gives the desired estimate for the off-diagonal entries of the Green's function $G_{\ast}$ with $t$-HP. An analogous calculation proves the desired entry-wise estimates for the Green's function $G_{\ast,+}$, which completes the proof of Theorem 4.1.
%
%
%
\section{Appendix}
\subsection{Estimates on the Stieltjes transforms on the upper-half plane}
Here, we want to control the growth of the Stieltjes transforms on the upper-half plane. This is summarized in the following lemma.
\begin{lemma} \label{lemma:STMPlinearbound}
Uniformly over $z \in \C_{+}$, we have 
\begin{align}
m(z) \ = \ O(1). \label{eq:STMPlinearbound}
\end{align}
\end{lemma}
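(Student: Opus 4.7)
The strategy is to reduce the bound on $m$ to a corresponding bound on $m_\infty$, for which we already have the explicit quadratic self-consistent equation \eqref{eq:sceMP}. Starting from the defining integral of $m(z)$, the reflection symmetry $\varrho(-E) = \varrho(E)$ allows one to combine the two components of the support of $\varrho$; the substitution $x = E^2$, together with the algebraic identity $\varrho(\sqrt{x})/\sqrt{x} \propto \varrho_\infty(x)$ (obtained by comparing the two density formulas \eqref{eq:MPlaw} and the display defining $\varrho$), then yields the explicit identity
\begin{align*}
m(z) \ = \ C_\gamma \, z \, m_\infty(z^2), \qquad z \in \C_+,
\end{align*}
for an explicit constant $C_\gamma = O(1)$ depending only on $\gamma \leq 1$. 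Consequently the lemma reduces to showing $|z \, m_\infty(z^2)| = O(1)$ uniformly on $\C_+$.

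For $|z| \geq 2\sqrt{\lambda_+}$, the support of $\varrho$ is contained in $[-\sqrt{\lambda_+}, \sqrt{\lambda_+}]$, so $|y - z| \geq |z|/2$ on the support and the trivial estimate $|m(z)| \leq 2 \|\varrho\|_{L^1}/|z| = O(1/|z|)$ gives an even stronger bound. For bounded $|z|$, the argument splits according to $\gamma$. If $\alpha > 1$ (equivalently $\gamma < 1$), then $\on{supp}(\varrho_\infty) = [\lambda_-, \lambda_+] \subset (0,\infty)$, the density $\varrho_\infty$ is bounded on its support, and it vanishes as a square root at both edges and is therefore $1/2$-H\"older continuous. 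A standard harmonic-analytic argument (the imaginary part of $m_\infty$ is controlled by the density itself, while the real part is given by the Hilbert transform of a H\"older continuous function) then shows that $m_\infty$ is uniformly bounded on $\C_+$, whence $|z \, m_\infty(z^2)| \leq |z| \cdot \sup_w |m_\infty(w)| = O(1)$ for $|z|$ bounded.

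The delicate case is $\alpha = 1$ (equivalently $\gamma = 1$), where $\lambda_- = 0$ and $\varrho_\infty$ has a $1/\sqrt{x}$-singularity at the origin, so that $m_\infty$ itself genuinely blows up near $w = 0$. Here we invoke the self-consistent equation \eqref{eq:sceMP}, which in the case $\gamma = 1$ simplifies to $w m_\infty^2 + w m_\infty + 1 = 0$; this has the explicit upper-half-plane solution $m_\infty(w) = \tfrac{1}{2}\bigl(-1 + \sqrt{1 - 4/w}\bigr)$, from which one reads off $|m_\infty(w)| = O(|w|^{-1/2})$ near $w = 0$ and $O(1)$ away from the origin. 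Substituting $w = z^2$ gives $|z \, m_\infty(z^2)| = O(|z| \cdot |z|^{-1}) = O(1)$ uniformly, completing the proof. The main obstacle is precisely this cancellation in the $\gamma = 1$ case: neither boundedness of $m_\infty$ nor the integral representation alone suffices to see it, and one needs the explicit formula obtained from the quadratic self-consistent equation to confirm that the singularity of $m_\infty(z^2)$ near $z = 0$ is exactly cancelled by the explicit linear $z$ prefactor in the reduction identity.
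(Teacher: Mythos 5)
Your proposal is correct and follows the same overall structure as the paper's proof: both reduce to the identity $m(z) = z\,m_\infty(z^2)$ (you carry along a benign normalization constant $C_\gamma = O(1)$, which is immaterial for the conclusion --- in fact the paper's Lemma~\ref{lemma:GFrelations} forces $C_\gamma = 1$, and the small discrepancy with what you read off from the displayed density formulas appears to be a harmless normalization artifact), and both treat $\gamma = 1$ and $\gamma < 1$ separately. For $\gamma = 1$ the two arguments coincide: the explicit square-root formula coming from the quadratic self-consistent equation \eqref{eq:sceMP} is exactly the semicircle Stieltjes transform the paper writes down, and the $|w|^{-1/2}$ singularity of $m_\infty(w)$ at $w=0$ is cancelled by the linear prefactor $z$. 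For $\gamma < 1$ your route is genuinely different: you establish a \emph{uniform} bound $\|m_\infty\|_{L^\infty(\C_+)} = O(1)$ by a harmonic-analysis argument (Poisson extension controls $\on{Im}\,m_\infty$ by $\|\varrho_\infty\|_\infty$, and the Hilbert transform of the compactly supported $1/2$-H\"older density controls $\on{Re}\,m_\infty$), then multiply by the bounded factor $|z|$ after dispatching $|z| \gg 1$ trivially. The paper instead splits the energy into three ranges: for $|E| < \e$ it bounds the defining integral of $m(z)$ directly using that $\on{supp}(\varrho)$ is bounded away from the origin (which is where $\lambda_- > 0$, i.e.\ $\alpha > 1$, enters), and for $|E| \geq \e$ it reads the bound off the explicit quadratic formula for $m_\infty$. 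Your argument is a bit more abstract and self-contained (it does not need the explicit formula in the rectangular regime), while the paper's is more hands-on; both correctly isolate the delicate cancellation at the origin as the only point where $\gamma = 1$ requires extra care.
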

We proceed by considering the two regimes $\gamma = 1$ and $\gamma < 1$, which we refer to as the \emph{square regime} and \emph{rectangular regime}, respectively. \newline

\noindent {\bf{Square Regime.}}
If $\gamma = 1$, we rewrite the density $\varrho(E)$ as
\begin{align}
\varrho_{\gamma = 1}(E) \ = \ \frac{\sqrt{4 - E^2}}{2 \pi} \mathbf{1}_{E \in [-2, 2]},
\end{align}
which is the well-studied semicircle density, whose Stieltjes transform is given by
\begin{align}
m(z) \ = \ \frac{-z + \sqrt{z^2 - 4}}{2}.
\end{align}
For a reference on the semicircle law and its Stieltjes transform, we cite \cite{AGZ}, \cite{BHKY}, \cite{BKY}, \cite{ESY}, \cite{EYY}, and \cite{LY}. We note the branch of the square root is taken so that $\sqrt{z^2 - 4} \sim z$ for large $z$, in which case the bound \eqref{eq:STMPlinearbound} follows immediately. \newline

\noindent {\bf{Rectangular Regime.}}
Fix constants $\Lambda > 0$ and $\e > 0$ to be determined. Suppose $|E| \in [\e, \Lambda]$. By the representation $m(z) = zm_{\infty}(z^2)$ and the explicit formula for $m_{\infty}(z)$ as given in \eqref{eq:STMP}, the bound \eqref{eq:STMPlinearbound} follows immediately in this energy regime, where the implied constant in \eqref{eq:STMPlinearbound} may be taken independent of $\eta$.

Suppose now that $|E| > \Lambda$. Again by the representation $m(z) = zm_{\infty}(z^2)$, we have
\begin{align}
|m(z)| \ &= \ O \left( -z^2 + i \sqrt{(\lambda_{+} - z^2)(z^2 - \lambda_{-})}\right) \\
&= \ O \left(-z^2 + \sqrt{(z^2 - \lambda_{+})(z^2 - \lambda_{-})} \right) \\
&= \ O(1),
\end{align}
since the square root is, again, chosen so that $\sqrt{z^4 + O(z^2)} \sim z^2$ for large $z$. 

Lastly, suppose $|E| < \e$. By definition of $m(z)$ as the Stieltjes transform of $\varrho$, we obtain
\begin{align}
|m(z)| \ = \ \left| \int_{E^2 \in [\lambda_{\pm}]} \ \frac{\gamma}{(1 + \gamma) \pi |E| (E - \e)} \sqrt{(\lambda_+ - E^2)(E^2 - \lambda_-)} \ dE \right| \ = \ O \left(\frac{1}{\sqrt{\lambda_{-} - \e}} \right),
\end{align}
where the implied constant depends only on fixed data $\gamma, \lambda_{\pm}$. Choosing $\e = \sqrt{\lambda_{-}}/100 > 0$, we obtain the desired bound. We note this choice of $\e$ is positive if and only if $\alpha > 1$. This completes the proof of Lemma \ref{lemma:STMPlinearbound}. \QED
%
%
%


\begin{thebibliography}{1}

\bibitem{A} Adlam, Ben. \emph{The Local Marchenko-Pastur Law for Sparse Covariance Matrices}. Senior Thesis, Harvard University Department of Mathematics (2013).

\bibitem{AGZ} Anderson, Greg W., Alice Guionnet, and Ofer Zeitouni. \emph{An Introduction to Random Matrices. Cambridge}. Cambridge UP, 2010. Print.

\bibitem{BS} Bai, Zhidong and Jack W. Silverstein. \emph{Spectral analysis of large dimensional random matrices}. Springer Series in Statistics. Springer, New York, second edition, 2010.

\bibitem{BHKY} Bauerschmidt, Roland and Jiaoyang Huang, Antti Knowles, and H.T. Yau. "Bulk eigenvalue statistics for random regular graphs." arXiv:1505.06700 [math.PR]. Submitted 25 May, 2015.

\bibitem{BKY} Bauerschmidt, Roland and Antti Knowles, and H.T. Yau. "Local semicircle law for random regular graphs." arXiv:1503.08702. Submitted 30 Mar, 2015.

\bibitem{BEKYY} Bloemendal, Alex and Laszlo Erdos, Antti Knowles, H-T. Yau, and Jun Yin. "Isotropic local laws for sample covariance matrices and generalized Wigner matrices." \emph{Electronic Journal of Probability}, published online March 15, 2014, DOI: 10.1214/EJP.v19-3054.

\bibitem{BKYY} Bloemendal, A., Knowles, A., Yau, HT. et al. Probab. Theory Relat. Fields (2016) 164: 459. DOI:10.1007/s00440-015-0616-x

\bibitem{DJ} Dumitriu, Ioana and T. Johnson. "The Marchenko-Pastur law for sparse random bipartite biregular graphs." \emph{Random Structures and Algorithms}, published online 12/2014, DOI 10.1002/rsa.20581.

\bibitem{ESY} Erdos, Laszlo and Benjamin Schlein and H.-T. Yau. ``Universality of Random Matrices and Local Relaxation Flow". \emph{Inventiones Mathematicae}. (2011). 185: 75. DOI:10.1007/s00222-010-0302-7

\bibitem{EYY} Erdos, Laszlo, and H.-T. Yau and J. Yin. ``Bulk Universality for generalized Wigner matrices". \emph{Probability and Related Fields}, 154(1-2):341-407, 2012.

\bibitem{HLY} Huang, Jiaoyang and Benjamin Landon and H.T. Yau. ``Bulk Universality of Sparse Random Matrices". \emph{Journal of Mathematical Physics}. (2015).

\bibitem{LSY} Landon, Benjamin, Phillippe Sosoe, and H.T. Yau. "Fixed energy universality of Dyson Brownian motion". arXiv:1609.09011 [math.PR]. September 28, 2016. 

\bibitem{LY} Landon, Benjamin and H.T. Yau. ``Convergence of local statistics of Dyson Brownian motion". \emph{Communications in Mathematical Physics}, to appear (2015).

\bibitem{NP1} Pillai, Natesh S, Jun Yin "Edge universality of correlation matrices." \emph{Annals of Statistics}. 40 (2012), no. 3, 1737--1763. DOI" 10.1214/12-AOS1022.

\bibitem{NP2} Pillai, Natesh S., Jun Yin. "Universality of covariance matrices." \emph{Annals of Applied Probability}, 24 (2014), no. 3, 935--1001. DOI: 10.1214/13-AAP939. 

\bibitem{TV} Tao, Terence; Vu, Van. Random covariance matrices: Universality of local statistics of eigenvalues. Ann. Probab. 40 (2012), no. 3, 1285--1315. doi:10.1214/11-AOP648.

\bibitem{Y2} Yang, Kevin. "Bulk eigenvalue correlation statistics of random biregular bipartite graphs." \emph{In preparation}.

\bibitem{Y3} Yang, Kevin. "Local correlation and gap statistics under Dyson Brownian motion for covariance matrices." \emph{In preparation}.
\end{thebibliography}
\end{document}